\pgfplotsset{compat=1.12}
\newcommand{\calP}{\mathcal{P}}
\newcommand{\calM}{\mathcal{M}}
\newcommand{\calZ}{\mathcal{Z}}
\newcommand{\calB}{\mathcal{B}}
\theoremstyle{plain}
\newtheorem{theorem}{Theorem}[section]
\newtheorem{proposition}[theorem]{Proposition}
\newtheorem{lemma}[theorem]{Lemma}
\theoremstyle{definition}
\newtheorem{definition}[theorem]{Definition}
\newtheorem{assumption}[theorem]{Assumption}
\theoremstyle{remark}
\newtheorem{remark}[theorem]{Remark}
\title{Gradient Flow Sampler-based\\
Distributionally Robust Optimization}
\author{
Zusen Xu\\
Weierstrass Institute \& \\
KTH Royal Institute of Technology\\
Berlin, Germany\\
\texttt{zusen.xu@wias-berlin.de} \\
    \And
Jia-Jie Zhu\\
Weierstrass Institute \& \\
KTH Royal Institute of Technology\\
Stockholm, Sweden\\
\texttt{jiajie@kth.se}\\
}
\begin{document}

\maketitle

\begin{abstract}
We propose a mathematically principled PDE gradient flow framework for distributionally robust optimization (DRO). Exploiting the recent advances in the intersection of Monte Carlo sampling and statistical optimal transport, we show that our theoretical framework can be implemented as practical algorithms for sampling from worst-case distributions and, consequently, DRO.
While numerous previous works have relied on dual reformulation techniques, we contribute a sound and complete gradient flow view based on SDEs or PDEs that can be used to construct new algorithms for general, potentially non-convex, losses.
  As an application, we solve a class of Wasserstein and entropy-regularized DRO problems using the recently-discovered Wasserstein Fisher-Rao and Stein variational gradient flows. Notably, we also show some simple reductions of our framework recover exactly previously proposed popular DRO methods, and provide new insights into their theoretical limits and optimization dynamics of DRO. Numerical studies based on stochastic gradient descent on machine learning tasks provide empirical backing for our theoretical findings.

\end{abstract}

\section{Introduction}\label{sec:intro}

Distributionally robust optimization (DRO)~\citep{delageDistributionallyRobustOptimization2010,kuhnDistributionallyRobustOptimization2024}
is a framework that aims to enhance the robustness of the solution to optimization problems.
  After    the original Wasserstein distributionally robust optimization (DRO) works \citep{mohajerin2018data,zhaoDatadrivenRiskaverseStochastic2018,gaoDistributionallyRobustStochastic2016}, many subsequent works have presented variations of problems with various numerical solutions. 
  In particular, Sinkhorn DRO \citep{wang2021sinkhorn}, also referred to as entropy-regularized Wasserstein DRO \citep{dapogny2023entropy}, demonstrates significant advantages in handling tasks with class imbalance. The entropy-regularized Wasserstein DRO problem can be formulated as a penalized optimization problem:   
\begin{align}
    \min_{\theta\in\Theta} \max_{\rho\in\calP}
    \left\{ \int \ell(\theta, y)\dd\rho{(y)} - \tfrac1{2\tau} W^2_{\epsilon}(\rho,\widehat{\rho_N}) \right\}
    \label{eq:main-ent-dro}
\end{align}
where $\tau > 0$ is the   regularization    parameter, $\widehat{\rho_N}$ is an empirical dataset, and $W^2_{\epsilon}(\mu, \nu) := \inf_{\gamma\in \Pi(\mu,\nu)} \{\mathbb{E}_{\gamma}[c] + \epsilon H(\gamma)\}$ is the
entropy-regularized optimal transport (OT) divergence. Here, $\Pi(\mu,\nu)$ is the set of couplings with marginals $\mu$ and $\nu$,   $c: \mathbb R^d \times \mathbb R^d \rightarrow \mathbb R$ is a cost function,    $H(\gamma) = \int \gamma \log \gamma$ is the negative entropy, and  $\epsilon > 0$ is the entropy regularization parameter.
If we set $\epsilon = 0 $, we recover the penalized version of the Wasserstein DRO as considered by \citet{sinha2020certifyingdistributionalrobustnessprincipled}.
Without loss of generality, we will temporarily focus on the interesting choice of $W^2_\epsilon$, while other choices such as the KL divergences can also be straightforwardly adapted to our framework.

We observe that the inner maximization problem in \eqref{eq:main-ent-dro} is equivalent to applying the entropy-regularized JKO operator (detailed in \eqref{eq:proximal-EOT})   from the PDE literature\citep{jordan1998variational, peyre2015entropic}  , resulting the equivalent formulation to \eqref{eq:main-ent-dro}
\begin{align}
\min_{\theta\in\Theta}
\bbE _ {y\sim \pi_Y} \ell (\theta, y),
\quad 
\pi_Y = \epsjko_{\tau V}(\widehat{\rho_N})
\label{eq:ent-jko-reformulation}
\end{align}
  where $V:=-\ell$.    Exploiting this connection,   our key insight is    solving the inner problem of DRO \eqref{eq:main-ent-dro} can be achieved by sampling from the worst-case distribution $\pi_Y$ using a gradient flow.
This simplification of the problem structure provides intuition for novel algorithmic design, which we use to effortlessly obtain novel algorithms such as Wasserstein-Fisher-Rao and Stein variational gradient-based DRO algorithms.
We term our methodology
\emph{Gradient Flow Sampler-based Distributionally Robust Optimization} (GF-DRO).
We illustrate the main idea in Figure~\ref{fig:gf-dro-intro}.

\begin{figure}[h!]
    \centering
    \tikzset{
        block/.style={rectangle, draw=black, minimum width=2.0cm, minimum height=0.6cm, text centered, font=\tiny\sffamily, line width=0.4pt},
        img_node/.style={rectangle, draw=black!30, inner sep=0pt, line width=0.4pt},
        arrow/.style={thick, ->, >=stealth, line width=0.6pt},
        label_style/.style={font=\tiny\itshape\sffamily, align=center},
        container/.style={draw=black!15, rectangle, rounded corners=2pt, inner sep=0.15cm, line width=0.4pt}
    }

    \begin{tikzpicture}[node distance=0.8cm]
        \node (data_node) [block, fill=blue!5] {$X \sim \widehat{\rho_N}$};
        
        \node (fig1_node) [img_node, below=0.1cm of data_node] {
            \includegraphics[width=1.6cm]{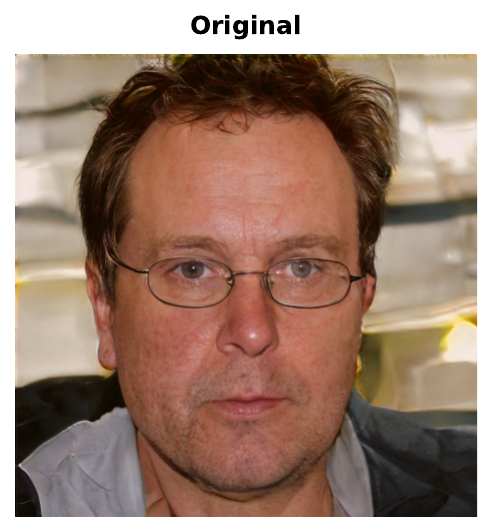} 
        };
        
        \begin{scope}[on background layer]
            \node (input_box) [container, fill=blue!2, fit=(data_node) (fig1_node)] {};
            \node[above=0pt of input_box, font=\tiny\bfseries\sffamily] {A sample from dataset};
        \end{scope}

        \node (fig_perturbed) [img_node, right=2.4cm of data_node] {
            \includegraphics[width=3.0cm]{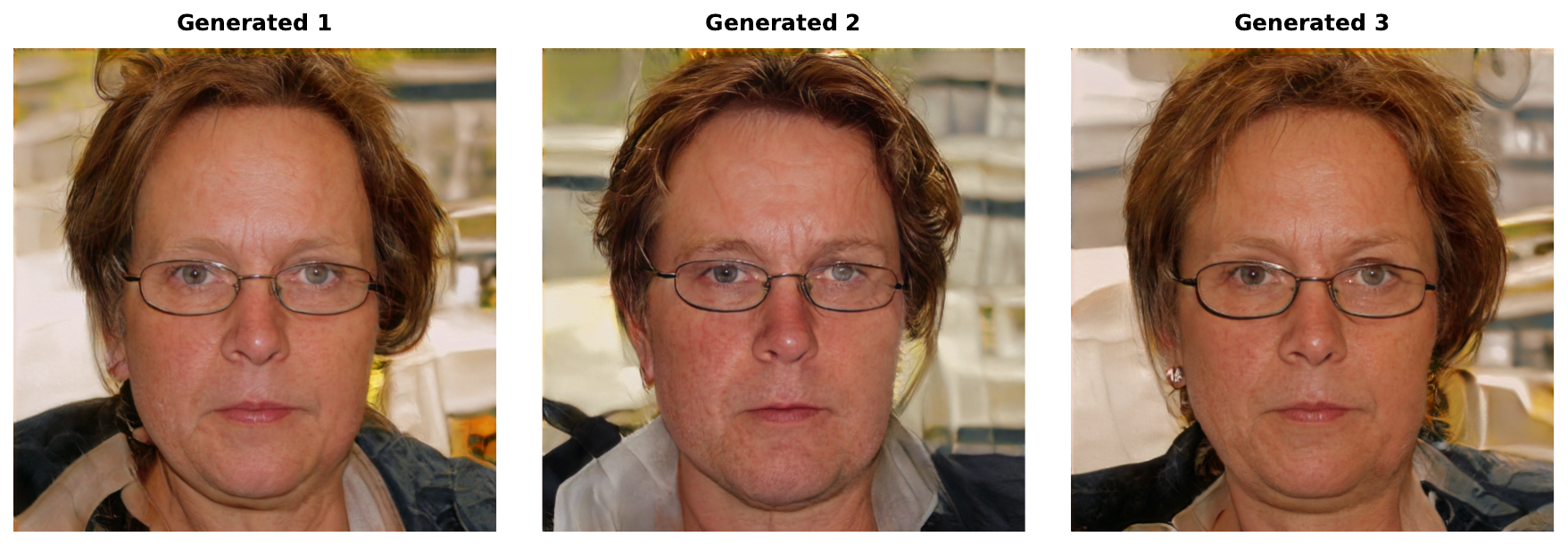}
        };

        \node (worst_case_node) [block, below=0.3cm of fig_perturbed, fill=red!5] {$Y_i \sim \pi_{Y|X}$};

        \begin{scope}[on background layer]
            \node (output_box) [container, fill=red!2, fit=(fig_perturbed) (worst_case_node)] {};
            \node[above=0pt of output_box, font=\tiny\bfseries\sffamily, align=center] {Samples from conditional \\ worst-case distribution};
        \end{scope}

        \node (update_node) [block, below=1.2cm of output_box.south, xshift=-1.5cm, minimum width=3.8cm, fill=green!5] {
            \begin{tabular}{c}
                \textbf{Parameter Update} \\
                $\theta \leftarrow \text{Proj}_\Theta(\theta - r \hat{g})$
            \end{tabular}
        };

        \draw [arrow] (input_box.east) -- node[above, label_style] {Gradient Flow \\ Samplers} (output_box.west |- input_box.east);
        
        \draw [arrow] (worst_case_node.south) -- node[right, label_style, xshift=0.05cm] {Estimate \\ gradient $\hat{g}$} (update_node.north -| worst_case_node.south);

        \draw [arrow, dashed, draw=black!30] (update_node.west) -| (input_box.south)
            node[pos=0.3, above, yshift=0.05cm, label_style, color=black!40] {Next Iteration};

    \end{tikzpicture}
    \caption{\textbf{GF-DRO} generates   samples from the conditional worst-case distribution    along the evolution of S/PDE gradient flows, which is fundamentally different from previous DRO methods.   Here $\pi_{Y|X}$ denotes the conditional worst-case distribution defined by the entropic JKO operator (see Proposition~\ref{lem:sb-klform}).    See text for detailed explanation and Section~\ref{sec:experiments} for numerical experiments.}
    \label{fig:gf-dro-intro}
    \vspace{-0.4cm}
\end{figure}

To obtain theoretical guarantees, we connect the stochastic approximation of the gradient w.r.t. $\theta$ for the outer DRO problem
  $\widehat g =
\nabla_\theta \ell (\theta, y), \, y\sim \pi_Y$,
with the analysis of gradient flow-based samplers.   
Subsequently, the gradient estimate is used for the outer DRO problem via SGD-like updates, e.g., $\theta^{s+1} \gets \theta^s - \eta \widehat g$.
The bound of $\widehat g$ deviating from the true gradient can be used in the downstream standard optimization error bound for DRO solution.

\vspace{-0.3cm}
\paragraph*{Contributions.}
Importantly, we distinguish between two different notions:
\begin{tightenum}
\item The DRO problem formulation with various ambiguity sets, e.g., Wasserstein DRO~\citep{zhaoDatadrivenRiskaverseStochastic2018,mohajerin2018data,gaoDistributionallyRobustStochastic2016}, KL-DRO~\citep{hu2013kullback,ben-talRobustSolutionsOptimization2013}, Sinkhorn DRO~\citep{wang2021sinkhorn}, Kernel DRO~\citep{zhu2021kernel}, etc.
\item The optimization algorithm and analysis for solving the DRO problem, e.g., WRM~\citep{sinha2020certifyingdistributionalrobustnessprincipled}, reformulation techniques~\citep{mohajerin2018data}, SGD~\citep{levyLargeScaleMethodsDistributionally2020}, etc.
\end{tightenum}
In this paper, we do not invent new DRO problem formulations or ambiguity sets  (task 1)  . Instead, we propose a principled mathematical framework---GF-DRO---for the analysis and design of novel DRO-solving algorithms  (task 2)   based on the theory of gradient flows and PDE. This framework bridges gradient-flow sampling and DRO, a connection previously missing in the literature. Unlike classical DRO approaches \citep{zhaoDatadrivenRiskaverseStochastic2018,mohajerin2018data,gaoDistributionallyRobustStochastic2016} that usually focus on specific loss functions to derive ad-hoc dual reformulations, GF-DRO provides a general framework to directly solve the variational problem \eqref{eq:proximal-EOT}. This allows the treatment of general, potentially non-convex losses, such as those defined by deep neural networks, for which many classical DRO methods are inapplicable. Our work can be viewed as a general-purpose   \emph{sampler-based bi-level optimization framework}    \citep{nemirovski2009robust}, where we invent novel DRO solvers by exploiting recently proposed gradient flow samplers, such as Wasserstein-Fisher-Rao (WFR) and Stein variational gradient (SVG) flows (i.e. we do not invent new PDE or gradient flow theory), to the energy functionals informed by DRO.
While previous methods like WRM ~\citep{sinha2020certifyingdistributionalrobustnessprincipled}    are fundamentally   discretized    ODEs, our use of S/PDEs and interacting particle systems advances the theoretical depth of the DRO subject. For instance, Wasserstein flows relies on the log-Sobolev inequality (LSI) constant which is problematic in high dimensions, while the WFR gradient flow we introduce provides a new lane of research that may go beyond the limitations of LSI constant.   Among recent sampling-based DRO efforts, FlowDRO \citep{xu2024flow} learns a deterministic transport map via Neural ODEs, and \citet{zhu2024distributionally} analyze a JKO-like iterative scheme; in contrast, GF-DRO leverages the equivalence between the inner DRO problem and a sampling task driven by the \emph{entropy-regularized JKO operator} (Proposition~\ref{lem:sb-klform}), letting us plug in any gradient flow sampler (WGF, WFR, SVG) and naturally handle entropy-regularized Sinkhorn DRO.    Ultimately, we demonstrate that DRO problems can be solved and analyzed in a unified, practical algorithmic framework without   inventing    ad-hoc reformulations.

\section{Preliminaries}
\subsection{Gradient Systems and Their Gradient Flow Equations}
The Wasserstein gradient flow (WGF) framework \citep{otto1996double} was introduced into the sampling literature to provide a theoretical foundation; see
\citet{chewi2024statistical,garcia2018continuum}
for recent surveys.
In that framework, one can write a flow equation formally as
\begin{align}
    \dot \rho =
    -  \mathbb G_W(\rho)^{-1}(\rho) \frac{\delta F}{\delta \rho}[ \rho]
    = \nabla \cdot\left(\rho\nabla \frac{\delta F}{\delta \rho}[ \rho]\right)
    \label{eq:wasserstein-gfe}
\end{align}
using
the inverse of the Wasserstein Riemannian metric tensor:
$\mathbb G_W^{-1}(\rho): T^*_\rho \calM \to T_\rho \calM, \xi \mapsto -\nabla \cdot(\rho\nabla \xi),$
where $\calM$ is a space or a manifold, $T_\rho \calM$ is the tangent space of $\calM$ at $\rho$ and
$T^*_\rho \calM$ is the cotangent space.
With those ingredients, we can formally define the gradient systems that generate gradient flow equations such as the WGF equation~\eqref{eq:wasserstein-gfe}.
We refer to \citet{mielke2023introduction} for more details.
\begin{definition}
    [Gradient system]
    We refer to a tuple $\left( \calM, F, \bbG \right)$
    as a gradient system. 
    It has the gradient structure
    identified by:
\begin{tightenum}
    \item a space or a manifold $\calM$,
    \item an energy functional $F$,
    \item a dissipation geometry given by either:
        a distance metric defined on $\calM$ or
                a Riemannian metric tensor $\mathbb G$.
\end{tightenum}
\label{def:gradient-system}
\end{definition}

\subsection{Gradient Flow Sampler}
For sampling and inference,
a common choice for the energy functional is the KL divergence,
i.e.,
$F(\rho) = \KL(\rho | \pi)$.   Under the Wasserstein metric,   
through elementary calculation, we obtain from \eqref{eq:wasserstein-gfe} the Fokker-Planck equation (FPE)
\begin{align}
    \partial_t \rho =
    \nabla \cdot \left( \rho \nabla \log\frac{\rho}{\pi}  \right)
    =
    { \Delta \rho} - \nabla \cdot \left( \rho \nabla \log \pi  \right)
    .
    \label{eq:fokker-planck}
  \end{align}
When we express the target as $\pi(x) = \frac1Z \exp(-V(x))$
where $Z$ is a normalization constant (partition function), \eqref{eq:fokker-planck} is then 
$\partial_t \rho = { \Delta \rho} + \nabla \cdot \left( \rho \nabla V  \right)$.
Viewed as a dynamical system, the KL divergence energy functional dissipates
along \eqref{eq:fokker-planck} in the steepest descent manner.
Based on Definition~\ref{def:gradient-system}, we say that PDE~\eqref{eq:fokker-planck} has the \emph{gradient structure} that entails the following key ingredients:
\begin{align*}
    \begin{cases}
        \textrm{{Space} :}&
        \text{prob. space }
        \mathcal P ,
        \\
        \textrm{Energy functional} :& {F(\cdot):= \KL(\cdot | \pi)},
        \\
        \textrm{Dissipation Geometry} :& { \text{Wasserstein metric}}.
    \end{cases}
\end{align*}

\subsection{Entropy-regularized Wasserstein DRO}
Works such as \citet{sinha2020certifyingdistributionalrobustnessprincipled,wang2021sinkhorn} considered DRO problems with rather general loss functions as they are based on general-purpose continuous optimization rather than DRO reformulation techniques for special losses.
\citet{wang2021sinkhorn} proposed a specialized algorithm solving the dual formulation of the DRO problem~\eqref{eq:main-ent-dro}. 
Different from their dual approach, this paper develops a unified gradient flow sampler framework that directly samples from the primal worst-case distribution that solves the inner problem.

\section{A Gradient Flow Framework for Sampling from Worst-case Distributions}\label{sec:gf-dro-method}

\subsection{Schr\"odinger Problem Formulation of DRO}
Our starting point is the following entropy-regularized JKO (Jordan-Kinderlehrer-Otto) operator   from PDE\citep{jordan1998variational, peyre2015entropic}   :
\begin{align}
    \epsjko_{\tau V}(\rho_0) :=
    \argmin_{\rho \in \calP}
    \left\{\int V \, \dd \rho + \tfrac{1}{2\tau} W^2_\epsilon(\rho, {\rho_0}) \right\}
    \label{eq:proximal-EOT}
\end{align}
This operation,
hence the inner maximization problem of DRO,
is a special case of the static Schr\"odinger problem with a free marginal, i.e., half bridge or one-sided bridge.
We now provide a variational characterization enabling us to perform the sampling task in \eqref{eq:ent-jko-reformulation}.
\begin{proposition}\label{lem:sb-klform}
    Solving the entropy-regularized JKO operator \eqref{eq:proximal-EOT} is equivalent to solving the Schr\"odinger half bridge problem
\begin{equation}
\label{eq:sb-klform-half-bridge}
\min_{\mathclap{\substack{\Pi\in \calM^2, \\\int \Pi \dd y = \rho_0}}}
\biggl\{ \int V \dd \Pi + \frac{1}{2\tau} \int c(x,y) \dd\Pi + \frac\epsilon{2\tau}\int \log \Pi \dd\Pi \biggr\}
\end{equation}
The optimal marginal distribution of $Y$ in \eqref{eq:sb-klform-half-bridge} is given by a mixture distribution, for some normalization constants $Z_x$:
    \begin{align}\label{eq:worst-dist}
        \pi_Y = \E_{x\sim \rho_0} \left[
            \dfrac{1}{Z_x}\exp
            \left({-\frac{\widetilde V_{x,\tau}(y) }{\epsilon}} \right)    
         \right],
    \end{align}
    where $\widetilde V_{x,\tau}(y) := V(y) + \frac{c(y,x)}{2\tau}.$
    Consequently, \eqref{eq:sb-klform-half-bridge} is equivalent to the minimization of the expected KL divergence with respect to the conditional distribution
    \begin{align}\label{eq:sb-klform}
        \min_{\rho_{Y|X}}
    \mathbb{E}_{x\sim\rho_0}
            \KL
            \left(
            \rho_{Y|X=x}(y) 
            \big|
            \tfrac{1}{Z_x}
            \exp \left(
                       - \tfrac{\widetilde V_{x,\tau}(y)}{\epsilon} 
            \right)
            \right)
            .
    \end{align}
\end{proposition}
The optimal
conditional distribution of \eqref{eq:sb-klform}, denoted by $\pi_{Y|X}$, provides the
$$\text{stochastic entropic transport map}:
\quad
\, x\mapsto \pi_{Y|X=x}(y).$$
This statement gives the overall variational structure of the DRO problem~\eqref{eq:main-ent-dro}.
We are now ready to introduce our proposed method GF-DRO in the next section.
\subsection{Gradient Flow Sampler-based DRO}
  Using the variational problem~\eqref{eq:sb-klform}, we define the energy functional as
the KL divergence energy functional (without expectation):  
\begin{equation}
\label{eq:conditional-sampling-energy}
    \begin{split}
        F(\rho) :&=
        \tfrac{\epsilon}{2\tau}
        \KL
            \left(
            \rho\ 
            \bigg| 
            \tfrac{1}{Z_x}
            \exp \left(
                        -\tfrac{\widetilde V_{x,\tau}(y)}{\epsilon} 
            \right)
            \right) \\
            &=
        \int V\, \dd \rho
        + \frac{1}{2\tau} W_c^2(\rho, \delta_x) 
         + 
         \frac{\epsilon}{2\tau} \int \rho \log \rho 
         + \const
    \end{split}
\end{equation}
  Note that the formulation on the right-hand side was also observed by \citet{chen2022improved} in studying the proximal sampler.

With those ingredients,
in this paper, we propose to achieve
sampling from the conditional distribution $\pi_{Y\mid X}$ by simulating the \emph{gradient system
$\displaystyle\left(
\calP, F, \mathbb G
\right)$}.
This results in the formal gradient flow equation:
\begin{align}
\dot\rho = - \mathbb G^{-1}(\rho)\,  DF(\rho).
\label{eq:general-gradient-flow}
\end{align}
where we can freely choose the dissipation geometry $\mathbb G$ for the gradient flow.
Then,
our central methodology is the following perspective of sampling from conditional distributions
connected to the KL-minimization problem~\eqref{eq:sb-klform}.
Consider solving the inner maximization problem~\eqref{eq:main-ent-dro} via the following two-step sampling procedure:
\begin{algorithm}[h!]
\caption{Worst-case Distribution Sampler via Gradient Flows}
\label{alg:sampler}
\begin{algorithmic}[1]
    \STATE {\bfseries Input:} an initial distribution ${\rho}_0$ to sample from (e.g. empirical distribution $\widehat{\rho}_N$ in data-driven DRO) 
    \STATE Sample $X \sim {\rho}_0$
    \STATE Sample $Y \sim \pi_{Y\mid X}$
    using a gradient flow
    (e.g. with energy functional given by \eqref{eq:conditional-sampling-energy})
    \STATE {\bfseries Output:} sample $Y$ from the worst-case distribution
\end{algorithmic}
\end{algorithm}

Using the gradient flow sampler, we now propose the following general-purpose gradient flow sampler-based DRO framework.
\begin{algorithm}[h!]
\caption{Gradient Flow Sampler-based DRO}
\label{alg:GF-DRO}
\begin{algorithmic}[1]
    \STATE {\bfseries Input:} Initial distribution $\rho_0$, e.g., empirical distribution $\rho_0 = \widehat{\rho_N}$, constraint set $\Theta$, $\tau, \epsilon >0$, stepsize $r_s$.
    \FOR{ iteration count $s = 0, \dots$}
        \STATE Generate a sample from the worst-case distribution $y^s \sim \pi_Y$ by using the gradient flow sampler in Algorithm~\ref{alg:sampler}
        \STATE DRO step: $\theta^{s+1} \leftarrow \text{Proj}_\Theta (\theta^s-r_s \nabla_\theta \ell(\theta^s, y^s))$
    \ENDFOR
\end{algorithmic}
\end{algorithm}
Other straightforward variants, such as using sample average approximation (SAA) instead of stochastic approximation (SA) above, are possible.

\subsubsection{Wasserstein Gradient Flow Sampler}
As the first practical outcome of our theoretical insights, we instantiate a Wasserstein gradient flow (WGF) sampler for the entropy-regularized Wasserstein DRO problem.
We consider the Wasserstein gradient system with the driving functional $F$ and the Wasserstein metric as the dissipation geometry:
    $\left( \calP, F, W_2 \right).$

In sampling algorithms, this gradient flow is typically
implemented by discretizing the Langevin SDE
$$
dX_t = 
-
 \tfrac{2\tau}{\epsilon}\nabla\widetilde V_{x, \tau}(X_t) dt + dW_t,
$$
which results in the following forward Euler discretization known as the unadjusted Langevin algorithm (ULA).
Note that we use a scaled stepsize.
\begin{lemma}
    The forward Euler discretization of the Wasserstein gradient flow equation of the energy functional $F$ in \eqref{eq:conditional-sampling-energy} is given by the difference equation at step $t$:
    \begin{align}
        X_{t+1} = X_t 
        -
         \eta_t \nabla \widetilde V_{x, \tau}(X_t) + \sqrt{\eta_t\frac\epsilon\tau} \xi_t
        \label{eq:ula-proximal-general-D-W2}
    \end{align}
    where $\xi_t$ is a standard normal random variable and
    $\eta_t$ is the stepsize.
\end{lemma}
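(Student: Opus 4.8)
The plan is to make the abstract gradient flow equation \eqref{eq:wasserstein-gfe} concrete for the energy $F$ of \eqref{eq:conditional-sampling-energy}, recognize the resulting PDE as a Fokker--Planck equation, and then read off its Euler--Maruyama discretization. First I would compute the first variation $\frac{\delta F}{\delta \rho}$. Writing $W_c^2(\rho,\delta_x)=\int c(y,x)\,\dd\rho(y)$ (all mass is transported to the single atom $x$), the energy splits into two linear terms and a negative-entropy term, whose variations are $V$, $\frac{1}{2\tau}c(\cdot,x)$, and $\frac{\epsilon}{2\tau}(\log\rho+1)$. Since $\widetilde V_{x,\tau}=V+\frac{1}{2\tau}c(\cdot,x)$, these collapse to $\frac{\delta F}{\delta\rho}=\widetilde V_{x,\tau}+\frac{\epsilon}{2\tau}\log\rho+\const$, consistent with the equivalent KL form $F=\frac{\epsilon}{2\tau}\KL(\rho\mid\pi)$ for $\pi\propto\exp(-\frac{2\tau}{\epsilon}\widetilde V_{x,\tau})$.

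Next I would substitute this variation into the Wasserstein gradient flow equation $\partial_t\rho=\nabla\cdot(\rho\,\nabla\frac{\delta F}{\delta\rho})$ from \eqref{eq:wasserstein-gfe}. The transport and quadratic-cost terms contribute the drift $\nabla\cdot(\rho\,\nabla\widetilde V_{x,\tau})$, while the entropic term, via the identity $\rho\nabla\log\rho=\nabla\rho$, contributes $\frac{\epsilon}{2\tau}\Delta\rho$. This yields the Fokker--Planck equation $\partial_t\rho=\nabla\cdot(\rho\,\nabla\widetilde V_{x,\tau})+\frac{\epsilon}{2\tau}\Delta\rho$, which is exactly \eqref{eq:fokker-planck} specialized to the potential $\widetilde V_{x,\tau}$ and temperature $\epsilon/\tau$.

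Finally I would invoke the standard Fokker--Planck/SDE correspondence: the equation $\partial_t\rho=-\nabla\cdot(b\rho)+\frac{\sigma^2}{2}\Delta\rho$ is the law of $dX_t=b(X_t)\,dt+\sigma\,dW_t$. Matching coefficients gives $b=-\nabla\widetilde V_{x,\tau}$ and $\frac{\sigma^2}{2}=\frac{\epsilon}{2\tau}$, i.e. $\sigma=\sqrt{\epsilon/\tau}$. Applying the forward Euler (Euler--Maruyama) scheme with stepsize $\eta_t$ --- replacing $dt$ by $\eta_t$ and $dW_t$ by $\sqrt{\eta_t}\,\xi_t$ with $\xi_t$ standard normal --- then gives $X_{t+1}=X_t-\eta_t\nabla\widetilde V_{x,\tau}(X_t)+\sqrt{\eta_t\,\epsilon/\tau}\,\xi_t$, which is precisely \eqref{eq:ula-proximal-general-D-W2}.

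I expect the only nonroutine point to be bookkeeping the diffusion constant. One must match $\frac{\epsilon}{2\tau}$ against $\frac{\sigma^2}{2}$ rather than $\sigma^2$, and reconcile this with the Brownian-scaling convention of the displayed Langevin SDE and its scaled stepsize, so that the noise amplitude lands exactly on $\sqrt{\eta_t\,\epsilon/\tau}$. The first-variation computation and the discretization itself are otherwise mechanical, and the identity $\rho\nabla\log\rho=\nabla\rho$ is the only structural fact needed to pass from the gradient flow to the Laplacian.
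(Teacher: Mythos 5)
Your proposal is correct and follows essentially the same route as the paper: identify the Wasserstein gradient flow of $F$ with a Fokker--Planck equation, pass to the corresponding Langevin SDE, and apply Euler--Maruyama. The only difference is bookkeeping: you work in the natural time-scale of the gradient flow of $F$ itself (drift $-\nabla \widetilde V_{x,\tau}$, diffusion $\sqrt{\epsilon/\tau}$), which is a time-reparametrization of the SDE displayed in the paper (drift $-\tfrac{2\tau}{\epsilon}\nabla\widetilde V_{x,\tau}$); this is exactly what the paper's remark ``we use a scaled stepsize'' absorbs, and your coefficient matching $\sigma^2/2 = \epsilon/(2\tau)$ lands the noise amplitude on $\sqrt{\eta_t\,\epsilon/\tau}$ as required by \eqref{eq:ula-proximal-general-D-W2}.
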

We note the stepsize scaling in front of the stochastic variable $\xi_t$ is different from the vanilla ULA update rule.

Adapting the above WGF dynamics to our GF-DRO framework in Algorithm~\ref{alg:GF-DRO}   for generating samples from the worst-case distribution   , we obtain the following discrete-time DRO algorithm summarized in Algorithm~\ref{alg:SDRO-NGD}.

\begin{algorithm}[h!]
\caption{Entropy-regularized Wasserstein DRO via WGF}
\label{alg:SDRO-NGD}
\begin{algorithmic}[1]
  \STATE {\bfseries Input:} Empirical distribution $\widehat{\rho_N}$, constraint set $\Theta$, $\tau, \epsilon >0$, stepsize sequence $\{r_s > 0\}_{s=0}^{S-1}$, inner stepsize $\eta$, inner iterations $T$, number of samples $m$.
  
  \FOR{$s = 0, \dots, S-1$}
    \STATE Sample $x^s \sim \widehat{\rho_N}$
    \STATE Initialize $y_{0}^{i,s} \leftarrow x^s, i=1,\dots,m$
    \FOR{$t = 0, \dots, T-1$}
      \STATE Sample $\xi^{i,s}_t \sim\mathcal{N}(0,I)$
      \STATE Update sample using \eqref{eq:ula-proximal-general-D-W2}: \\ $y_{t+1}^{i,s} \leftarrow y_{t}^{i,s} - \eta\nabla\widetilde V_{x^s, \tau}(y_{t}^{i,s}) + \sqrt{\eta\epsilon/\tau} \xi^{i,s}_t$
    \ENDFOR
    \STATE $\theta^{s+1} \leftarrow \text{Proj}_\Theta (\theta^s-r_s \sum_{i=1}^m \frac{1}{m}\nabla_\theta \ell(\theta^{s},y_{T}^{i,s}))$
  \ENDFOR
  \STATE {\bfseries return} $\theta^S$
\end{algorithmic}
\end{algorithm}

\begin{remark}\label{rm:WRM}
    [\citet{sinha2020certifyingdistributionalrobustnessprincipled}'s WRM]
    For the penalized Wasserstein DRO problem,
    i.e.
    $\epsilon=0$ in the entropy-regularized Wasserstein DRO problem   \eqref{eq:main-ent-dro}   , 
    the step in \eqref{eq:ula-proximal-general-D-W2} specializes to the update rule:
    \begin{align}
        X_{t+1} = X_t 
        - \eta_t \nabla \widetilde V_{x, \tau}(X_t)
        \label{eq:ula-proximal-general-D-W2-epsilon-0}
    \end{align}
    which coincides with the inner SGD step used in the WRM algorithm by \citet{sinha2020certifyingdistributionalrobustnessprincipled}.
Hence, their WRM algorithm is a special case (with only ODE) of our GF-DRO framework in Algorithm~\ref{alg:GF-DRO}.
\end{remark}
Through the example above, we see the value of this paper's gradient flow perspective: it can easily generalize the WRM algorithm~\citep{sinha2020certifyingdistributionalrobustnessprincipled} from ODE to a novel SDE-based   or PDE-based    algorithm for the Sinkhorn DRO problem using \eqref{eq:ula-proximal-general-D-W2}.
Therefore, the gradient flow perspective is not simply a theoretical formulation -- it lets us effortlessly design new algorithms to solve DRO without resorting to ad-hoc modifications of other DRO methods.
Moreover, we will later go beyond the standard Wasserstein gradient flow and introduce more advanced gradient flows such as the Wasserstein Fisher-Rao (WFR) and Stein variational gradient (SVG) flows.

\subsubsection{Wasserstein Fisher-Rao Flow Sampler}
Consider the WFR gradient system of the energy functional $F$, i.e., the triple $\left( \calP, F, \WFR \right)$, where $\WFR$ is the Wasserstein-Fisher-Rao metric, a.k.a. the \emph{Hellinger-Kantorovich} metric restricted to the probability space\footnote{Note that there are many cases of misnomer in the machine learning literature: WFR should technically be defined over the space of positive measures, not the space of probability measures; see \citet{mielkeNotesHellingerDistance2025} for a historical account. The latter corresponds to the spherical Hellinger-Kantorovich metric. Although their gradient flow solutions can be easily related to each other via the mass scaling. See \citet{mielke2025hellinger} for technical details.}.
Specifically, we consider the HK/WFR gradient system associated with the reaction-diffusion PDE:
\begin{align}
    \partial_t \rho
    &= \alpha\, \mathrm{div} \left( \rho \nabla \frac{\delta F}{\delta \rho} \right) - \beta\, \rho 
    \left(\frac{\delta F}{\delta \rho} - \int \frac{\delta F}{\delta \rho}\dd \rho \right)
    \label{eq:wfr-gradient-flow}
\end{align}
  where $\alpha,\beta>0$ are the coefficients of the Wasserstein (transport) and Fisher-Rao (reaction) components of the HK/WFR metric, respectively.   
By simulating the interacting particle system associated with \eqref{eq:wfr-gradient-flow},
we propose a WFR-based worst-case distribution sampler for SDRO as detailed in Algorithm \ref{alg:SDRO-WFR}.
The primary distinction of this sampler, compared to the WGF version,   is the addition of a birth-death mechanism. 
Crucially, this mass reallocation mechanism enables the system to concentrate its search within low-energy regions and effectively escape from local optima. See details in Appendix~\ref{sec:wfr-alg}.

\subsubsection{Stein Variational Gradient Flow Sampler}
In an attempt to simulate \eqref{eq:fokker-planck} using deterministic particle-based methods instead of SDEs,
\citep{liu2016stein}
proposed the Stein variational gradient descent (SVGD) algorithm, which is an 
implementable
deterministic algorithmic version of \eqref{eq:fokker-planck}.
For a target $\pi$, at time step $t$,
it updates the particle locations via the following gradient descent scheme.
\begin{align*}
    X^i_{t+1} = X^i_t + \eta\cdot \Big( \frac{1}{m}\sum_{j=1}^m \nabla \log\pi(X^j_t)k(X^j_t,X^i_t ) +
    \frac{1}{m}\sum_{j=1}^m \nabla_2 k(X^j_t,X^i_t ) \Big).
\end{align*}
Here, $X^i_t$ represents the position of the $i$-th particle at time step $t$, $\eta$ is the stepsize, $m$ is the total number of particles.
$k(\cdot,\cdot)$ is a positive definite kernel function, and $\nabla_2$ denotes the gradient with respect to the second argument of the kernel.
Applying SVGD to the inner problem of DRO, we propose Algorithm~\ref{alg:SDRO-SVG}. See details in Appendix~\ref{sec:svg}.

\subsubsection{Rejection Sampler} 
We also propose Algorithm~\ref{alg:SDRO_rgo},  which is a DRO algorithm    based on rejection sampler (RGO). This approach is based on the backward step of the proximal sampler \citep{lee2021structured,chen2022improved,wibisono2025mixing}, as detailed in Appendix\ref{sec:rgo}. However, this method requires the loss function to be $L$-smooth and cannot be applied when $\tau$ is relatively large, which we will demonstrate in later experiments.

\begin{remark}[\textbf{Dynamic interpolation: merits of a dynamic model}]
  Our gradient flow framework produces a principled family of distributions $\{\rho_t\}_{t\geq 0}$, where each $\rho_t$ solves a well-defined variational problem and corresponds to a specific perturbation intensity controlled by $t$. This dynamic structure has practical value: when the terminal distribution $\rho_{t_T}$ produces unrealistic samples under a ``bad'' hyperparameter (Figure~\ref{fig:evolution-image}), one can directly select an earlier $\rho_{t_i}$ for sample-based SGD without re-running the sampler with a different parameter (e.g.\ a smaller $\tau$). Unlike post-hoc interpolation between independent runs, every $\rho_t$ along the flow inherits structural guarantees from gradient flow theory, in particular monotone energy dissipation along $F$.   
\begin{figure}[h!]
    \centering
    \includegraphics[width=0.8\linewidth]{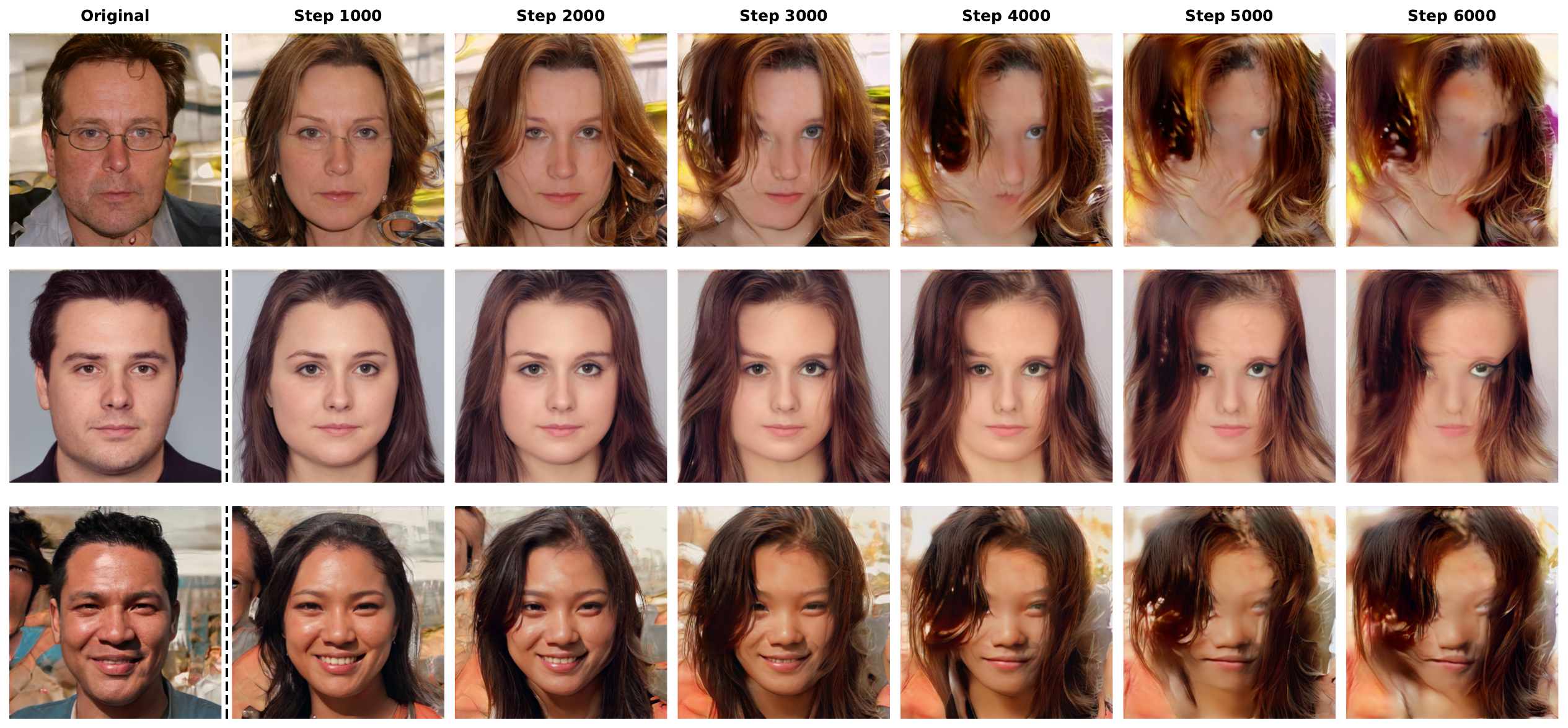}
    \caption{Evolution along the gradient flow under a ``bad'' hyperparameter.   The first column displays the original images. The subsequent columns show the evolving states of the particles at intervals of 1000 steps, from Step 1000 to Step 6000.   }
    \label{fig:evolution-image}
    \vspace{-0.3cm}
\end{figure}
\end{remark}

\section{Analysis of Gradient Flows}
For algorithmic design, one paradigm is to first obtain intuition and insights from principled mathematical analysis before constructing practical numerical algorithms.
While the development in machine learning research does not always follow this pattern, we argue in this paper that, by first understanding the theoretical limits of the gradient flow, we can design novel DRO algorithms that performs as expected theoretically. This is done without inventing new theoretical framework, but by leveraging the existing gradient flow and PDE analysis \citep{otto2000generalization, jordan1998variational} specialized to DRO. 
The discrete-time algorithmic analysis will follow in a later section.

In DRO works such as \citep{sinha2020certifyingdistributionalrobustnessprincipled}
and subsequent variants, 
complexity analysis typically involves controlling the gradient estimation error.
That is, to control
the deviation 
from the true gradient used for DRO.
For illustration, let us consider the
DRO problem \eqref{eq:main-ent-dro}.
Note that
setting $\epsilon = 0$ recovers the
problem of \citep{sinha2020certifyingdistributionalrobustnessprincipled}.
Our goal is to analyze the estimation of DRO stochastic gradient $\hat g$
using PDE tools.
With our gradient flow sampler-based DRO algorithm,
we aim to generate samples from the worst-case distribution $\pi_Y$, which is the entropic JKO solution
 
\begin{align}
\label{eq:entropic-jko}
\pi_Y = \epsjko_{\tau V}(\widehat{\rho_N})
\end{align}
  
Following the framework of Algorithm~\ref{alg:sampler},
  for $x\sim \rho_0$,
we generate samples from the worst-case distribution
by sampling from the conditional distribution, $y\sim \rho^*_{Y|X=x}$.
Then, for the outer DRO problem, we can use the sample-based gradient estimate $\widehat g := \nabla_\theta \ell(\theta, y)$    to update the parameter $\theta$.   
The following result uses gradient flow analysis to bound the gradient estimate error.
We note that the results are stated in terms of the ideal continuous-time gradient flow, which is not yet the mixing time of the discrete-time sampler; we will detail the discrete-time algorithmic analysis in a later section.
The gradient flow analysis provides a key insight and guideline for the design of the gradient flow sampler-based DRO algorithm.

For notational simplicity, we abbreviate $\widetilde V_{x,\tau}$ as $\widetilde V$ in this section and, specifically, for the case of quadratic transport cost $c(y,x) = \|x-y\|^2$, we define the following.
Recall the functional $F(\rho) =
\frac{\epsilon}{2\tau}
\KL
    \left(
    \rho\ 
    \bigg| 
    \tfrac{1}{Z_x}
    \exp \left(
                        -\tfrac{\widetilde V_{x,\tau}(y)}{\epsilon} 
            \right)
    \right)$ as in \eqref{eq:conditional-sampling-energy}.
\begin{definition}
\label{as:gf-properties}
\begin{enumerate}
\item   \textbf{$L$-smoothness:} A differentiable function $f$ is $L$-smooth if there exists a constant $L > 0$ such that
    \begin{align}
        \|\nabla f(x) - \nabla f(x')\| \leq L \|x - x'\|, \quad \forall x, x' \in \mathcal{X}.
        \label{eq:L-smoothness}
    \end{align}  
    \item \textbf{Semi-convexity   of the regularized potential $\widetilde V$  :} There exists a constant $\lambda > 0$ such that
    \begin{align}
        \nabla ^2 \widetilde V \geq \lambda I
        \iff 
        \nabla ^2 V + \frac1{2\tau }I \geq \lambda I
        \label{eq:semi-convexity}
    \end{align} 
    \item \textbf{Gradient dominance/Polyak-\L ojaciewicz (PL)
inequality   of $\widetilde V$   :}
    There exists a constant $\lambda > 0$ such that
    \begin{align}
        \big|\nabla \widetilde V \big|^2 \geq \lambda \widetilde V,
        \quad \forall y \in \mathcal X
        \label{eq:gradient-dominance-PL}
    \end{align}
    \item \textbf{Functional PL inequality over measures along GF}: There exists a constant $\lambda > 0$ such that
    \begin{align}
        \left\|\frac{\dd}{\dd t} F(\rho_t)\right\|^2 \geq \lambda F(\rho_t),
        \label{eq:generalized-PL}
    \end{align}
    which is equivalent to the $\lambda$-\textbf{logarithmic Sobolev inequality} (LSI) for $\pi  =     \tfrac{1}{Z_x}
    \exp \left(
                        -\tfrac{\widetilde V_{x,\tau}(y)}{\epsilon} 
            \right)$,
    \begin{align}
        \int \Bigg\| 
        \nabla \log 
        \dfrac{\rho}{\pi} \Bigg\| ^2\dd \rho
        \geq 
        \lambda
        \KL(\rho | \pi)
        \label{eq:LSI-along-GF}
    \end{align}
    along the solution $\rho_t$.
\end{enumerate}
\end{definition}

It is immediate that
$1.\implies 2.$
and
$1.\implies 3$.
We note that
the second condition in Definition~\ref{as:gf-properties}
does not require the function $V$ itself to be convex. Hence, the DRO loss $\ell(\theta, x)$ does not necessarily need to be concave in variable $x$.
In such cases, we can already obtain the geodesic convexity of $F$ in the Wasserstein space, termed displacement convexity.

  Letting    the DRO objective $\Phi(\theta): = \max_{\rho\in\calP}\left\{
    \int \ell(\theta, z)\dd\rho{(z)} - \tfrac1{2\tau} W^2_{\epsilon}(\rho,\widehat{\rho_N})\right\}$, using standard analysis of gradient flows, we obtain the following ideal   gradient    estimate.
\begin{proposition}\label{prop:gradient_oracle_error_control}
    Denote the initial sample $x^i\sim \widehat{\rho_N}$   after running the Wasserstein gradient flow sampler for time $t$ as    $y^i_t\sim \rho^t_{Y|X=x^i}$.
      For a fixed $\theta$,
    suppose that $\nabla_\theta\ell(\theta, y)$ is $L_f$-Lipschitz continuous in $y$ and
    $\widetilde V_{x, \tau}(y) : = -\ell(\theta, y)+\tfrac1{2\tau}c(y, x)$ is $L$-smooth and either
$\lambda$-convex
    with $\lambda > 0$
    or
    satisfies the
    $\lambda$-LSI
    as in Definition~\ref{as:gf-properties}.   
    Then, for sufficiently large $N$, in order to generate an $\epsilon_\text{grad}$-gradient estimate   in expectation  , i.e.,
    \begin{align*}
     \mathbb{E}\!\left[\,  \bigg|\frac1N \sum_{i=1}^N \nabla_\theta \ell(\theta, y^i_t) 
    -
    \nabla_\theta 
    \Phi(\theta)
    \bigg| \,\right]  
    {\leq}
    \epsilon_\text{grad},
    \end{align*}
    the Wasserstein gradient flow sampler needs to run for time at least\;
    $\displaystyle t \gtrsim \calO \left(\frac{1}{\lambda}\log{\frac{L_f}{\sqrt{\lambda}\epsilon_\text{grad}}}\right)$.
\end{proposition}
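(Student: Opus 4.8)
The plan is to split the gradient estimation error into an \emph{identification} part and a \emph{sampling} part, and to control the latter through the exponential contraction of the Wasserstein gradient flow to its stationary law. First I would invoke the envelope theorem (Danskin's theorem): the inner objective $\int \ell(\theta,z)\,\dd\rho(z) - \tfrac{1}{2\tau}W_\epsilon^2(\rho,\widehat{\rho_N})$ is concave in $\rho$ with $\theta$ entering only through the \emph{linear} term $\int \ell\,\dd\rho$, and by Lemma~\ref{lem:sb-klform} its maximizer is the worst-case distribution $\pi_Y$. Hence the true DRO gradient is the partial gradient evaluated at the optimum,
\[
\nabla_\theta\,\max_{\rho\in\calP}\Bigl(\int\ell(\theta,z)\,\dd\rho(z)-\tfrac1{2\tau}W_\epsilon^2(\rho,\widehat{\rho_N})\Bigr)
= \int \nabla_\theta \ell(\theta, y)\,\dd\pi_Y(y)
= \frac1N\sum_{i=1}^N \int \nabla_\theta\ell(\theta,y)\,\dd\rho^*_{Y\mid X=x^i}(y),
\]
where the last equality uses the mixture form $\pi_Y=\E_{x\sim\widehat{\rho_N}}[\rho^*_{Y\mid X=x}]$ and $\widehat{\rho_N}=\tfrac1N\sum_i\delta_{x^i}$. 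The idealized continuous-time estimator is the same average but with each target conditional $\rho^*_{Y\mid X=x^i}$ replaced by the law $\rho_t^{(i)}$ of the gradient flow initialized at $x^i$.

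Next I would bound each summand by a Wasserstein distance. Since $\ell(\theta,\cdot)$ is $L$-smooth (so the mixed second derivative is bounded and $y\mapsto\nabla_\theta\ell(\theta,y)$ is $L$-Lipschitz), Kantorovich--Rubinstein duality gives
\[
\Bigl|\int\nabla_\theta\ell(\theta,y)\,\dd\rho_t^{(i)} - \int\nabla_\theta\ell(\theta,y)\,\dd\rho^*_{Y\mid X=x^i}\Bigr|
\le L\,W_1\bigl(\rho_t^{(i)},\rho^*_{Y\mid X=x^i}\bigr)
\le L\,W_2\bigl(\rho_t^{(i)},\rho^*_{Y\mid X=x^i}\bigr).
\]
Averaging over $i$ reduces the entire error to controlling $\max_i W_2(\rho_t^{(i)},\pi)$, with $\pi=\rho^*_{Y\mid X=x}$ the stationary distribution of the sampler.

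The third step is the contraction estimate, handled in the two regimes of Definition~\ref{as:gf-properties}. If $\widetilde V$ is $\lambda$-convex then $F$ is $\lambda$-displacement convex, and standard metric-gradient-flow theory \citep{ambrosio2008gradient} yields geodesic contraction $W_2(\rho_t^{(i)},\pi)\le e^{-\lambda t}\,W_2(\rho_0^{(i)},\pi)$. If instead $F$ satisfies the $\lambda$-LSI \eqref{eq:LSI-along-GF}, the entropy decays exponentially along the flow, $\KL(\rho_t^{(i)}\,|\,\pi)\le e^{-2\lambda t}\,\KL(\rho_0^{(i)}\,|\,\pi)$, and since LSI implies Talagrand's $T_2$ inequality $W_2^2(\cdot,\pi)\le\tfrac2\lambda\KL(\cdot\,|\,\pi)$, we get $W_2(\rho_t^{(i)},\pi)\le\sqrt{2/\lambda}\,e^{-\lambda t}\sqrt{\KL(\rho_0^{(i)}\,|\,\pi)}$. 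In either case $W_2(\rho_t^{(i)},\pi)\le C\,e^{-\lambda t}$, where $C$ depends only on $\lambda$ and on the initial divergence $D_0:=\max_i \KL(\rho_0^{(i)}\,|\,\pi)$ (finite for any $t>0$ thanks to the instantaneous smoothing of the Fokker--Planck flow even from a point initialization), with the sharpest $\lambda$-dependence being $C\sim\sqrt{D_0/\lambda}$ in the LSI regime.

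Combining the three steps gives the uniform bound $\bigl|\tfrac1N\sum_i\nabla_\theta\ell(\theta,y_t^i)-\nabla_\theta(\text{DRO objective})\bigr|\le L\,C\,e^{-\lambda t}$; requiring the right-hand side to be at most $\epsilon$ forces $e^{-\lambda t}\le \epsilon/(LC)$, i.e. $t\ge\tfrac1\lambda\log\tfrac{LC}{\epsilon}$, and substituting $C\sim\sqrt{D_0/\lambda}$ while absorbing $\sqrt{D_0}$ and universal constants into $\calO$ reproduces the claimed rate $t\gtrsim\calO\bigl(\tfrac1\lambda\log\tfrac{L}{\sqrt\lambda\,\epsilon}\bigr)$. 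I expect the main obstacle to be the rigorous justification of the envelope step, namely verifying uniqueness and sufficient regularity of the worst-case maximizer $\pi_Y(\theta)$ so that interchanging $\nabla_\theta$ with $\max_\rho$ is legitimate, together with correctly tracking the $\sqrt\lambda$ factor through Talagrand's inequality in the LSI regime; by contrast, the exponential contraction is a direct appeal to existing displacement-convexity and LSI results and is comparatively routine.
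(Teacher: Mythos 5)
Your proposal is correct and follows essentially the same route as the paper's proof: identify the DRO gradient with $\mathbb{E}_{y\sim\pi_Y}[\nabla_\theta\ell(\theta,y)]$, reduce the gradient error to a Wasserstein distance via the $L$-Lipschitzness of $\nabla_\theta\ell(\theta,\cdot)$ (Kantorovich--Rubinstein, $W_1\le W_2$), invoke exponential KL decay under displacement convexity or LSI together with Talagrand's inequality to get $W_2 \lesssim e^{-\lambda t}/\sqrt{\lambda}$, and solve for $t$. Your treatment is in fact slightly more careful than the paper's on two points it glosses over --- the explicit appeal to Danskin's theorem for the envelope step and the finiteness of the initial KL divergence from a point-mass initialization --- but these are refinements of the same argument, not a different one.
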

 
\begin{remark}
    The Lipschitz condition on $y \mapsto \nabla_\theta \ell(\theta, y)$ above is mild and standard, satisfied for example by losses with bounded mixed Hessian $\nabla_y \nabla_\theta \ell$. The same condition will be used in our discrete-time complexity analysis (Section~\ref{sec:complexity}).
\end{remark}

Results such as Proposition~\ref{prop:gradient_oracle_error_control} are based on the displacement convexity or PL/LSI in the Wasserstein space.
The value of our general gradient flow perspective is to let us freely choose the gradient flow geometry, not confined to Wasserstein.
This is not just for theoretical considerations --- the estimate such as
\eqref{eq:gf_time_to_epsilon}
depends crucially on the (PL/convexity) constant $\lambda$, which has been a major limitation of the Langevin type samplers in the literature.
  The potential speed-up of WFR over pure Wasserstein gradient flow comes from the Hellinger/Fisher-Rao reaction component when a suitable warm-start initialization is available; see \citet{luBirthdeathDynamicsSampling2023} and Appendix~\ref{sec:dual-review} for the precise warm-start condition.   
It provides a simple insight for algorithmic design:  the unbalanced WFR gradient flow can be used to speed up the sampling process for DRO.
This insight is later validated in the numerical studies.

As noted earlier, Proposition~\ref{prop:gradient_oracle_error_control} 
is aimed at providing the intuition for the behavior of the gradient flow sampler-based DRO algorithm; it is not the practical complexity estimate based on sampler's mixing time.
It serves as a guideline for the design of the gradient flow sampler-based DRO algorithm using existing tools from PDE/SDE.
Next, we provide optimization complexity estimate that characterizes the practical sampler-based DRO algorithm.

\section{Optimization Complexity Analysis}\label{sec:complexity}
The DRO problem \eqref{eq:main-ent-dro} can be written as 
$\min_{\theta\in\Theta} \Phi(\theta)$.
We analyze the computational complexity of our proposed discrete-time algorithms, aiming to find an $\epsilon_{\text{opt}}$-stationary point $\theta^{S}$ at the last iteration $S$,
  i.e., $\mathbb{E}[\|\nabla\Phi(\theta^{S})\|^{2}]\le\epsilon_{\text{opt}}^{2}$.   
Our analysis uses a standard framework for non-convex stochastic optimization \citep{ghadimi2013stochastic}. The outer loop performs SGD on $\theta$, while the inner loop generates samples to approximate the conditional worst-case distribution. The key challenge is controlling the bias in the stochastic gradient. We make the following assumptions regarding the objective function and the sampling process.

\begin{assumption}\label{as:global_assumptions}
    \begin{enumerate}
        \item \label{as:totalsmooth} \textbf{Smoothness of the   DRO    objective   in $\theta$  :}  There exists a constant $L_{\Phi} > 0$ such that
        \begin{equation*}
            \|\nabla\Phi(\theta_1) - \nabla\Phi(\theta_2)\| \le L_{\Phi}\|\theta_1-\theta_2\|, \forall \theta_1,\theta_2\in\Theta
        \end{equation*}
        \item \label{as:fsmooth}\textbf{Lipschitz continuity of the gradient oracle   in $z$  :} There exists a constant $L_{f} > 0$ such that,   for every $\theta \in \Theta$  ,
        \begin{equation*}
             \|\nabla_{\theta}\ell(\theta,   z_1  ) - \nabla_{\theta}\ell(\theta,   z_2  )\| \le L_{f}\|  z_1-z_2  \|, \forall   z_1,z_2\in\mathcal{Z}  
        \end{equation*}
        \item \label{as:gradest}\textbf{Sampler error and bounded variance:} The inner-loop sampler (Algorithm~\ref{alg:sampler}) generates a distribution $\widehat\rho_{Y|X=x}$, and  $\mathbb{E}_{x \sim \widehat{\rho_N}}[W_2(\widehat\rho_{Y|X=x}, \pi_{Y|X=x})] \le \delta_{\text{sample}}$. The stochastic gradient estimator $\widehat g$ has bounded variance $\sigma^2$,   i.e.,  $\mathbb{E}[\|\widehat g-\mathbb{E}[\widehat g ] \|^{2}]\le \sigma^2$   .
    \end{enumerate}
\end{assumption}
Under these standard assumptions, we can bound the gradient bias and establish a general convergence result for the outer loop.

\begin{theorem}[Outer Loop Convergence]
\label{thm:outer_loop}
Let Assumptions \ref{as:global_assumptions} hold. With a stepsize $r = \sqrt \frac{1}{SL_\Phi\sigma^2}$, the outer loop requires $S = O(\frac1{\epsilon_{\text{opt}}^4})$ iterations to find an $\epsilon_{\text{opt}}$-stationary point   (i.e. $\mathbb{E}[\|\nabla\Phi(\theta^{S})\|^{2}]\le\epsilon_{\text{opt}}^{2}$)   , provided the error $\delta_{\text{sample}}$ in Assumption~\ref{as:global_assumptions}.\ref{as:gradest} is controlled such that $\delta_{\text{sample}} = O(\frac {\epsilon_{\text{opt}}} {L_f})$.
\end{theorem}

This complexity aligns with   standard    non-convex stochastic algorithms. Next, we derive the complexity of Algorithm~\ref{alg:SDRO-NGD} as an example. For the analysis of the ULA-based sampler (Algorithm \ref{alg:SDRO-NGD}), we introduce the following assumption on the inner target distribution.

\begin{assumption}\label{as:LSI}
For any fixed $\theta$ and $x$, the conditional distribution $\pi_{Y|X=x}$ is $L_{U}$-smooth and satisfies the $\lambda_U$-log-Sobolev inequality (LSI); see Definition~\ref{as:gf-properties}   for the definitions   .
\end{assumption}

Assumption \ref{as:LSI} provides the necessary framework to determine the number of inner ULA iterations \citep{vempala2019rapid} required to achieve the sampling accuracy $\delta_{\text{sample}}$ (as specified in Assumption~\ref{as:global_assumptions}.\ref{as:gradest}). Crucially, Assumption \ref{as:LSI} relaxes the strict log-convexity assumption commonly adopted in previous works \citep{sinha2020certifyingdistributionalrobustnessprincipled, wang2021sinkhorn}. By combining the derived inner loop complexity analysis with the outer loop convergence rate from Theorem \ref{thm:outer_loop}, we can establish the total computational complexity.

\begin{remark}[Beyond smoothness]
Recent work shows that ULA converges under dissipativity alone \citep{johnston2025performance}, without requiring global smoothness or LSI. Since Theorem~\ref{thm:outer_loop} (outer loop) is sampler-agnostic, any improved inner-loop bound directly translates into improved total complexity. In particular, plugging in such relaxed convergence results would extend our framework to non-smooth losses (e.g., ReLU networks) more rigorously, replacing Assumption~\ref{as:LSI}.
\end{remark}

\begin{theorem}[Complexity of   GF-DRO (Algorithm \ref{alg:SDRO-NGD})  ]\label{thm:ula}
Under Assumptions \ref{as:global_assumptions}, and \ref{as:LSI}, the total complexity for Algorithm \ref{alg:SDRO-NGD} to find an $\epsilon_{\text{opt}}$-stationary point is   $\tilde{O}\left(\frac d {\epsilon_{\text{opt}}^6}\right)=O\left(\frac d {\epsilon_{\text{opt}}^6} \cdot\log \frac 1 {\epsilon_{\text{opt}}} \right)$.  
\end{theorem}

The detailed derivations for the gradient bias, and the proofs of Theorem \ref{thm:outer_loop} and Theorem \ref{thm:ula} are provided in Appendix~\ref{ap:complexity_proofs}.

\section{Experiments}\label{sec:experiments}

We conduct numerical experiments on different tasks to validate our theoretical insights. Our goal is to compare the performance of four different DRO methods: Algorithm~\ref{alg:SDRO-WFR} (WFR-DRO), Algorithm~\ref{alg:SDRO-NGD} (WGF-DRO), the dual method for SDRO (Dual) \citep{wang2021sinkhorn}, and WRM \citep{sinha2020certifyingdistributionalrobustnessprincipled}. We also compare algorithms based on Stein Variational Gradient (SVG-DRO) (Algorithm~\ref{alg:SDRO-SVG}) method and Restricted Gaussian Oracle \citep{lee2021structured} (RGO-DRO) (Algorithm~\ref{alg:SDRO_rgo}) method with those methods in   Appendices~\ref{sec:appx_features} and~\ref{sec:experiment3}  . For methodological consistency across all experiments, the Dual method uses Randomized Truncation MLMC \citep{blanchet2015unbiased} as suggested in \citep{wang2021sinkhorn}, and the WRM solves its inner problem using gradient descent.   Sections~\ref{sec:lenet_mnist} and~\ref{sec:cifar_resnet} evaluate adversarial robustness on increasingly deep convolutional architectures, while Section~\ref{sec:exp_latent_wfr} demonstrates the qualitative behaviour of the WFR sampler on a high-dimensional generative latent space.   
All experiments were conducted on a laptop with an NVIDIA RTX A3000 GPU using Python. The code is available at \url{https://github.com/ZusenXu/GFS-DRO}

\subsection{Adversarial Robustness on MNIST with LeNet-5}
\label{sec:lenet_mnist}

We evaluate our methods on MNIST using the LeNet-5 architecture in a high-dimensional, end-to-end training setting where the entire network is trained with DRO algorithms. We assess robustness using PGD attacks under both $L_2$ and $L_\infty$ norm constraints, and compare our proposed WFR-DRO and WGF-DRO ($\tau=1.0, \epsilon=0.05, m=8$) against WRM and the Dual Sinkhorn DRO approach.

\begin{table*}[htbp]
\centering
\begin{minipage}[t]{0.48\textwidth}
\centering
\caption{PGD-$L_2$ Robustness on MNIST LeNet-5 (Test Error \%, mean $\pm$ std)}
\label{tab:lenet_l2}
\resizebox{\linewidth}{!}{%
\begin{tabular}{lcccc}
\toprule
Method & $\Delta=0.00$ & $\Delta=0.025$ & $\Delta=0.05$ & $\Delta=0.075$ \\
\midrule
SAA     & $0.92{\scriptstyle\pm0.09}$ & $3.05{\scriptstyle\pm0.40}$ & $10.80{\scriptstyle\pm1.20}$ & $27.44{\scriptstyle\pm2.86}$ \\
WRM     & $0.98{\scriptstyle\pm0.09}$ & $3.47{\scriptstyle\pm0.39}$ & $11.66{\scriptstyle\pm1.50}$ & $29.22{\scriptstyle\pm3.18}$ \\
Dual    & $1.00{\scriptstyle\pm0.06}$ & $3.21{\scriptstyle\pm0.24}$ & $\phantom{0}9.62{\scriptstyle\pm0.74}$ & $24.20{\scriptstyle\pm1.82}$ \\
WGF-DRO & $0.92{\scriptstyle\pm0.08}$ & $2.97{\scriptstyle\pm0.36}$ & $\phantom{0}8.56{\scriptstyle\pm0.96}$ & $21.57{\scriptstyle\pm1.72}$ \\
WFR-DRO & $0.92{\scriptstyle\pm0.05}$ & $\mathbf{2.86}{\scriptstyle\pm0.18}$ & $\mathbf{\phantom{0}8.27}{\scriptstyle\pm0.29}$ & $\mathbf{20.81}{\scriptstyle\pm0.71}$ \\
\bottomrule
\end{tabular}%
}
\end{minipage}\hfill
\begin{minipage}[t]{0.48\textwidth}
\centering
\caption{PGD-$L_\infty$ Robustness on MNIST LeNet-5 (Test Error \%, mean $\pm$ std)}
\label{tab:lenet_linf}
\resizebox{\linewidth}{!}{%
\begin{tabular}{lcccc}
\toprule
Method & $\Delta=0.00$ & $\Delta=0.05$ & $\Delta=0.10$ & $\Delta=0.15$ \\
\midrule
SAA     & $0.92{\scriptstyle\pm0.09}$ & $3.48{\scriptstyle\pm0.56}$ & $11.05{\scriptstyle\pm1.98}$ & $27.70{\scriptstyle\pm4.52}$ \\
WRM     & $0.98{\scriptstyle\pm0.09}$ & $3.58{\scriptstyle\pm0.54}$ & $11.54{\scriptstyle\pm2.25}$ & $29.18{\scriptstyle\pm5.13}$ \\
Dual    & $1.00{\scriptstyle\pm0.06}$ & $3.13{\scriptstyle\pm0.30}$ & $\phantom{0}8.86{\scriptstyle\pm1.07}$ & $21.29{\scriptstyle\pm2.88}$ \\
WGF-DRO & $0.92{\scriptstyle\pm0.08}$ & $2.81{\scriptstyle\pm0.43}$ & $\phantom{0}7.57{\scriptstyle\pm1.27}$ & $17.79{\scriptstyle\pm2.72}$ \\
WFR-DRO & $0.92{\scriptstyle\pm0.05}$ & $\mathbf{2.68}{\scriptstyle\pm0.23}$ & $\mathbf{\phantom{0}7.41}{\scriptstyle\pm0.56}$ & $\mathbf{17.30}{\scriptstyle\pm1.57}$ \\
\bottomrule
\end{tabular}%
}
\vspace{-0.3cm}
\end{minipage}
\end{table*}

As shown in Tables~\ref{tab:lenet_l2} and~\ref{tab:lenet_linf}, WFR-DRO consistently yields the lowest test error across all non-zero attack intensities $\Delta$, with WGF-DRO and WFR-DRO essentially tied on clean accuracy. WGF-DRO outperforms WRM under non-trivial perturbations, suggesting that entropy regularization help in non-convex landscapes; the birth--death mechanism of WFR provides further improvement over WGF-DRO and outperforms the Dual baseline as well. WRM performs on par with SAA (within one standard deviation), which
is expected in the non-convex setting\citep{sinha2020certifyingdistributionalrobustnessprincipled}. For a complementary comparison on the convex feature-extracted CIFAR-10 setting, including RGO-DRO and SVG-DRO, see Appendix~\ref{sec:appx_features}.

\subsection{Adversarial Robustness on CIFAR-10 with ResNet-18}\label{sec:cifar_resnet}
We further validate the scalability of our methods in high-dimensional tasks by fine-tuning a pretrained ResNet-18 end-to-end on CIFAR-10. The DRO hyperparameters are $\tau = 0.1$ and $\epsilon = 0.05$ for Dual method, WGF-DRO and WFR-DRO. To align the comparison between WRM and methods based on SDRO, we set $\tau =0.1$ for WRM. And $m=8$ particles are used in WGF-DRO and WFR-DRO. The inner sampler uses step size $0.01$ for $20$ iterations across all methods. For the Dual baseline, we use the Randomized Truncation MLMC estimator \citep{blanchet2015unbiased} with maximum truncation level $4$. All methods are trained for 20 epochs with batch size 256. 

Tables~\ref{tab:resnet_l2} and~\ref{tab:resnet_linf} report the mean $\pm$ std of PGD-$L_2$ and PGD-$L_\infty$ test error under different attack intensities over 5 seeds. WFR-DRO achieves the lowest error across all non-zero attack intensities under both norms. WGF-DRO comes second, confirming that replacing the deterministic ODE with SDE alone already bring most of the robustness benefit, while WFR-DRO show an additional reduction over WGF-DRO. The Dual baseline performs well compared to WRM, suggesting that entropy regularization helps in this non-convex regime. WRM provides marginal improvement over SAA, consistent with WRM's reliance on convexity \citep{sinha2020certifyingdistributionalrobustnessprincipled} that fails to hold here. Detailed experimental settings, wall-clock timing, outer-loop convergence diagnostics, and a $\tau$-sensitivity analysis can be found in Appendix~\ref{sec:appx_diagnostics}.

\begin{table*}[h!]
\centering
\begin{minipage}[t]{0.48\textwidth}
\centering
\caption{PGD-$L_2$ Robustness on CIFAR-10 with ResNet-18 (Test Error \%, mean $\pm$ std)}
\label{tab:resnet_l2}
\resizebox{\linewidth}{!}{%
\begin{tabular}{lccccc}
\toprule
Method & $\Delta=0.00$ & $\Delta=0.005$ & $\Delta=0.01$ & $\Delta=0.015$ \\
\midrule
SAA      & $6.35{\scriptstyle\pm0.77}$  & $25.79{\scriptstyle\pm1.06}$ & $55.38{\scriptstyle\pm1.02}$ & $78.32{\scriptstyle\pm0.76}$  \\
WRM      & $6.27{\scriptstyle\pm0.55}$  & $22.27{\scriptstyle\pm0.49}$ & $47.76{\scriptstyle\pm1.80}$ & $71.24{\scriptstyle\pm2.28}$  \\
Dual     & $14.12{\scriptstyle\pm0.99}$ & $19.21{\scriptstyle\pm1.10}$ & $25.33{\scriptstyle\pm1.44}$ & $32.00{\scriptstyle\pm1.67}$  \\
WGF-DRO  & $12.09{\scriptstyle\pm0.75}$ & $16.47{\scriptstyle\pm0.69}$ & $21.81{\scriptstyle\pm0.88}$ & $27.70{\scriptstyle\pm1.06}$  \\ 
WFR-DRO  & $11.10{\scriptstyle\pm0.60}$ & $\mathbf{15.39}{\scriptstyle\pm0.73}$ & $\mathbf{20.63}{\scriptstyle\pm0.81}$ & $\mathbf{26.73}{\scriptstyle\pm0.72}$ \\

\bottomrule
\end{tabular}%
}
\end{minipage}\hfill
\begin{minipage}[t]{0.48\textwidth}
\centering
\caption{PGD-$L_\infty$ Robustness on CIFAR-10 ResNet-18 (Test Error \%, mean $\pm$ std)}
\label{tab:resnet_linf}
\resizebox{\linewidth}{!}{%
\begin{tabular}{lccccc}
\toprule
Method & $\Delta=0.00$ & $\Delta=0.005$ & $\Delta=0.01$ & $\Delta=0.015$ \\
\midrule
SAA      & $6.35{\scriptstyle\pm0.77}$  & $36.34{\scriptstyle\pm1.31}$ & $72.46{\scriptstyle\pm1.12}$ & $90.67{\scriptstyle\pm0.79}$ \\
WRM      & $6.27{\scriptstyle\pm0.55}$  & $31.26{\scriptstyle\pm0.88}$ & $65.07{\scriptstyle\pm2.12}$ & $86.35{\scriptstyle\pm1.57}$ \\
Dual     & $14.12{\scriptstyle\pm0.99}$ & $21.42{\scriptstyle\pm1.20}$ & $30.19{\scriptstyle\pm1.66}$ & $39.74{\scriptstyle\pm1.86}$ \\
WGF-DRO  & $12.09{\scriptstyle\pm0.75}$ & $18.48{\scriptstyle\pm0.74}$ & $26.18{\scriptstyle\pm1.11}$ & $35.02{\scriptstyle\pm1.13}$ \\ 
WFR-DRO  & $11.10{\scriptstyle\pm0.60}$ & $\mathbf{17.31}{\scriptstyle\pm0.76}$ & $\mathbf{25.18}{\scriptstyle\pm0.88}$ & $\mathbf{34.06}{\scriptstyle\pm0.83}$ \\

\bottomrule
\end{tabular}%
}
\end{minipage}
\end{table*}

\begin{figure}[h!]
  \centering

  \begin{subfigure}[b]{0.48\textwidth}
    \centering
    \includegraphics[width=\textwidth]{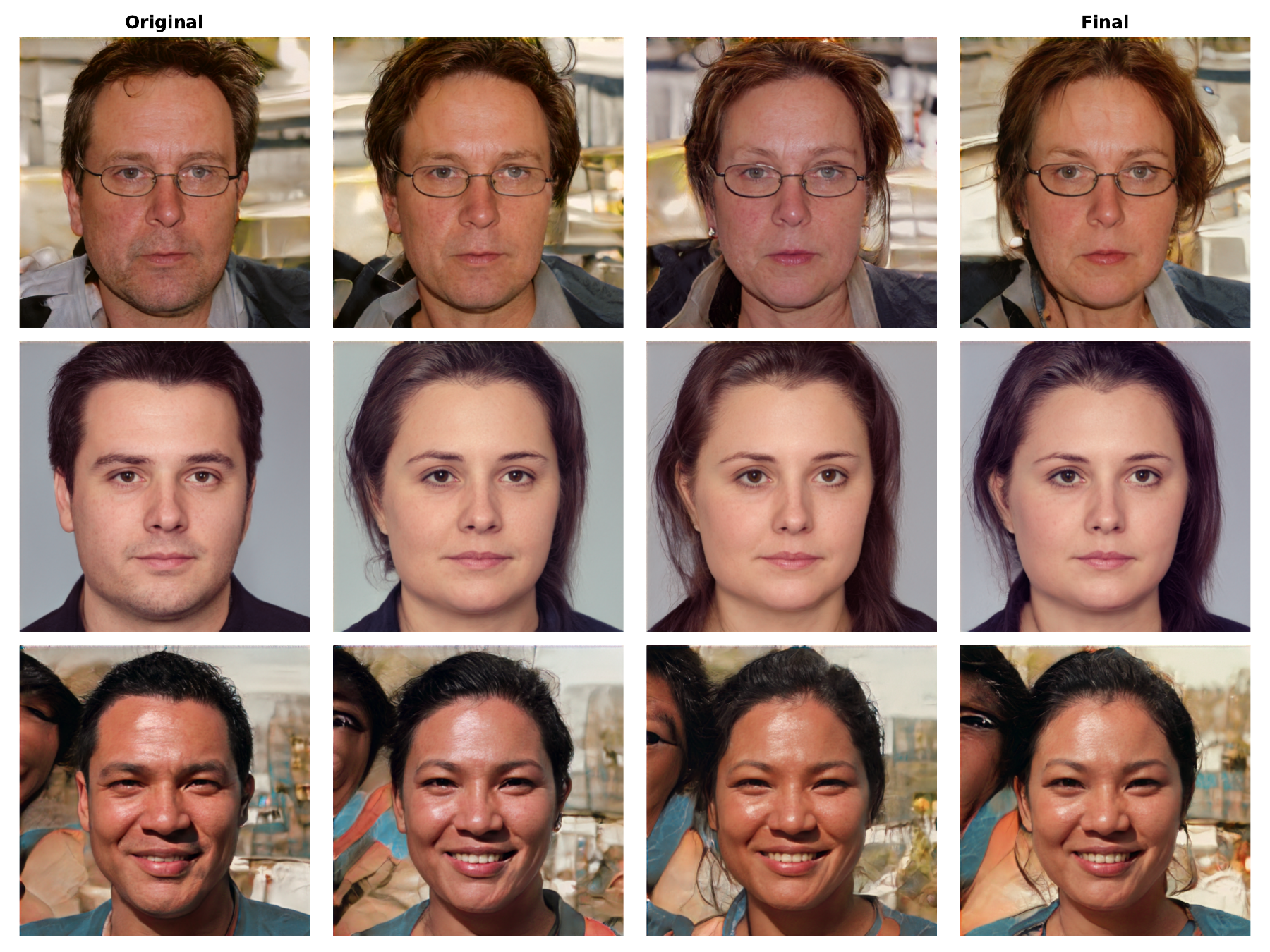}
    \caption{Semantic transitions in the latent space}
    \label{fig:wfr_latent}
  \end{subfigure}
  \hfill
  \begin{subfigure}[b]{0.48\textwidth}
    \centering
    \includegraphics[width=\textwidth]{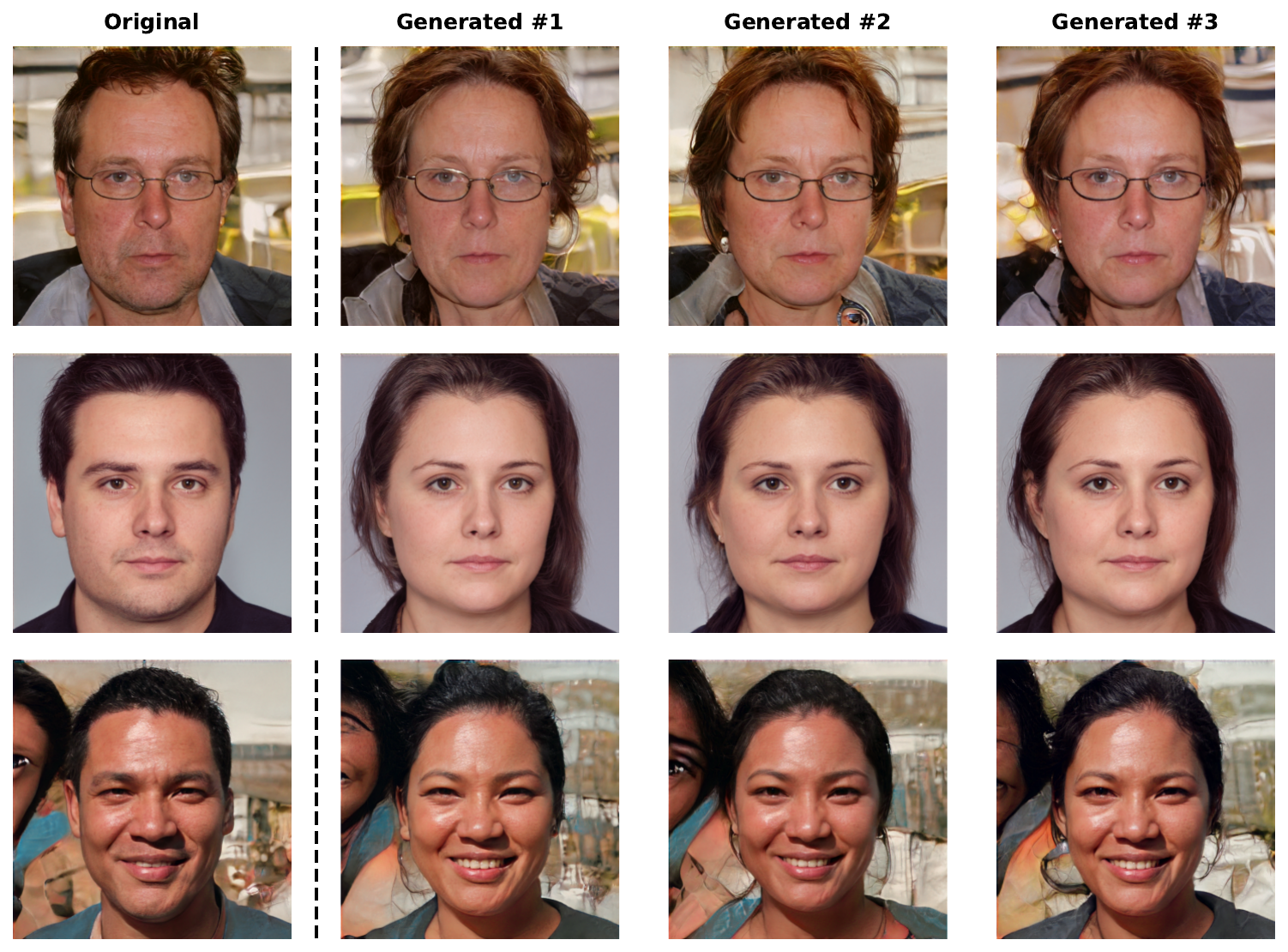}
    \caption{Comparison of sample diversity}
    \label{fig:wfr_diversity}
  \end{subfigure}
  \caption{\textbf{Performance and diversity of the WFR sampler.} \textbf{(a)} Semantic transitions.   Shown from left to right are the original images, intermediate samples at $T=100$ and $T=200$, and the final result at $T=300$.    \textbf{(b)} Sample diversity comparison.   The first column displays the original images, while the following three columns show the three highest-weighted samples generated at $T=300$.   Unlike deterministic methods that are restricted to a single adversarial solution, the WFR sampler successfully generates multiple distinct samples from the conditional worst-case distribution.}
  \label{fig:wfr_combined}
\end{figure}
\subsection{Human Face Classification}
\label{sec:exp_latent_wfr}

In this section, we apply the WFR gradient flow sampler to the 512-dimensional latent space of a pre-trained ALAE \citep{pidhorskyi2020adversarial} model to investigate how the sampler identifies the worst-case distribution for high-dimensional image data. We employ a linear logistic regression classifier trained on the FFHQ dataset to classify gender attributes in the latent space; the gradient of this classifier then serves as the potential function.   This experiment is qualitative: we use it to visually demonstrate the diversity of worst-case samples generated by the WFR sampler, complementing the quantitative robustness evidence in Sections~\ref{sec:lenet_mnist} and~\ref{sec:cifar_resnet}.   

The WFR sampler is configured with parameters $T=300$, $\eta = 0.05$, $\epsilon=0.2$, and $\tau=5$. As shown in Figure~\ref{fig:wfr_latent}, the sampler produces continuous semantic transitions while maintaining key identity details such as pose and expression. Unlike deterministic baselines such as WRM, which converge to a single adversarial solution, the WFR sampler generates diverse samples. This diversity is illustrated in Figure~\ref{fig:wfr_diversity}, where the top-3 weighted particles represent distinct stylistic variations for the same input, suggesting that the model effectively   generates    multiple plausible perturbations simultaneously.

\section{Other Works and Discussion}

Our work is positioned at the intersection of sampling   via gradient flow    and robust optimization. 
There is a thread of works that considered the (D)RO problems via gradient descent-ascent dynamics \citep{wang2022exponentially,yuFastDistributionallyRobust2022,trillosAdversarialRobustnessUse2023,congerStrategicDistributionShift2023}. 
They typically use proximal algorithms in the Wasserstein space    \emph{iteratively}, i.e.,  the Jordan-Kinderlehrer-Otto (JKO) scheme in PDE    , which can be viewed as one-step discretization of the continuous time PDE
  $\partial_t \rho = \nabla \cdot \left( \rho  \nabla \xi \right)$ for some $\xi$    . In this paper, we shift the perspective and view the inner maximization problem as a regularized variational problem that exactly corresponds to a sampling algorithm, without explicitly resorting to JKO iteratively. This allows our methodology to serve as a general-purpose   sampler-based bi-level optimization framework    \citep{nemirovski2009robust}, enabling a direct approach based on optimizing measures.

This perspective possesses significant unifying power: it reveals that previous methods such as WRM \citep{sinha2020certifyingdistributionalrobustnessprincipled} are fundamentally rooted in ODE analysis, which can be seen as a special case of the S/PDE framework proposed here.   Our framework is also orthogonal to map-based approaches such as FlowDRO \citep{xu2024flow} and JKO-iterative analyses such as~\citet{zhu2024distributionally} (already discussed in Section~\ref{sec:intro}): we address DRO via a sampling methodology built on the equivalence between the inner problem and an entropy-regularized JKO step.    This also allows optimizers to go beyond the standard (Wasserstein) gradient descent type of algorithm, inspiring novel approaches such as SVG and WFR   gradient flows   . Our theory treats general DRO by simply changing the corresponding gradient flow energy function $F$ in \eqref{eq:conditional-sampling-energy}, opening the door for future research into diverse DRO ambiguity notions and gradient flow   dissipation    geometries.

  A practical SGDA-style variant of our framework is also possible. Instead of running the inner sampler to convergence, one can reuse the perturbed samples from the previous iteration and take only a few inner update steps before each outer parameter update. A full theoretical analysis of this SGDA variant is left to future work.

\newpage

\appendix
\onecolumn

\section{Further Background}

\subsection{Dual approach to Entropy-regularized Wasserstein DRO}
\label{sec:dual-review}
The Sinkhorn DRO problem, as formulated by \citet{wang2021sinkhorn}, aims to find a decision that minimizes the worst-case expected loss over an ambiguity set defined by Sinkhorn divergence, which is identical to entropy-regularized Wasserstein distance in this problem. Given a loss function $\ell(\theta, z)$, a nominal distribution $\widehat{\rho_N}$, and a radius $r > 0$, the primal inner problem is:
\begin{equation}
\sup_{\rho\in \calB_{r, \epsilon}(\widehat{\rho_N})} \E_{z \sim \rho}[\ell(\theta, z)],
\label{eq:appendix-primal}
\end{equation}
where the ambiguity set is $\calB_{r, \epsilon}(\widehat{\rho_N}) := \{ \rho \in \calP(\calZ) : W_{\epsilon}^2(\widehat{\rho_N}, \rho) \le r \}$, and the entropy-regularized Wasserstein distance,
\[W^2_{\epsilon}(\rho_1, \rho_2) = \inf_{\gamma\in \Gamma(\rho_1,\rho_2)} \{\E_{(x,y)\sim\gamma}[c(x,y)] + H(\gamma)\}.\]

A dual approach, established by \citet{wang2021sinkhorn}, reformulates the inner maximization problem. Using the strong duality of this problem and substituting \eqref{eq:worst-dist} into this problem, the inner problem can be expressed as a minimization over a dual variable $\tau$:
\begin{equation}
\inf_{\tau \ge 0} \left\{ \frac{r}{2\tau} +\frac{\epsilon}{2\tau} \E_{x\sim\widehat{\rho_N}} \left[ \log \E_{y\sim\rho_{x,\epsilon}} \left[ \exp\left(\frac{2\tau\ell(\theta, y)}{\epsilon}\right) \right] \right] \right\},
\label{eq:appendix-dual}
\end{equation}
where $\rho_{x,\epsilon}(y) = \frac{\exp(\frac{-c(x,y)}{\epsilon})}{Z}$ is a kernel distribution centered at $x$, and $Z$ is a normalization constant.

To approximate the value and the gradient of the nested expectation in \eqref{eq:appendix-dual}, \citet{wang2021sinkhorn} employ a two-level sampling procedure for a given $\tau$: first, a sample $x$ is drawn from the nominal distribution $\widehat{\rho_N}$; second, a set of samples are drawn from the kernel distribution $\rho_{x,\epsilon}$ to approximate the inner expectation.

However, a major limitation of this dual method is that the nested expectation structure in the objective leads to a biased subgradient estimator when using standard Monte Carlo sampling, especially for small values of $\epsilon$. This bias can impede the convergence of the optimization procedure. This challenge motivates the exploration of alternative approaches, such as the gradient flow methods discussed in this paper, which can directly sample from the primal worst-case distribution to obtain a less biased estimate of the gradient.

\begin{remark}
    Consider the KL-DRO problem:
    \begin{equation}
    \min_\theta \sup_{\rho \ll \rho_0} \left\{ \mathbb{E}_{y \sim \rho}[\ell(\theta, y)] - \frac{1}{2\tau} \KL(\rho \| \rho_0) \right\} \label{eq:dro_objective_modified}
    \end{equation}
    Let us define a reference distribution $\rho_0(y)$ as a kernel density estimator of the empirical distribution $\widehat{\rho_N} = \frac{1}{N}\Sigma_{i=1}^N \delta(x_i)$: $\rho_0(y) = \frac{1}{N}\Sigma _{i=1}^N K_\epsilon(y,x_i)$, where $K_\epsilon(y,x_i)= \alpha_\epsilon \exp(-c(y,x_i)/\epsilon)$ and $\alpha_\epsilon = \E[\exp(-c(y,x_i)/\epsilon)]^{-1}$. With this choice, and by setting $\tau = \tau'/\epsilon$, the solution to the inner maximization problem is given by \citet{hu2013kullback}:
    \begin{align*}
        \rho^{\KL}(y) &= \rho_0(y) \cdot \frac{\exp(2\tau'\ell(\theta, y)/\epsilon)}{\mathbb{E}_{y \sim \rho_0}[\exp(2\tau'\ell(\theta, y)/\epsilon)]} \\
    &= \frac{1}{N}\Sigma _{i=1}^N \alpha_\epsilon \beta \exp((2\tau'\ell(\theta, y)- c(y,x_i))/\epsilon),
    \end{align*}
    where $\beta = \E_{y\sim \rho_0}[\exp(2\tau'\ell(\theta, y)/\epsilon)]^{-1}$. This resulting distribution bears a strong resemblance to the worst-case distribution for entropy-regularized DRO. However, they are not identical, as the normalization constants differ: $\alpha_\epsilon \beta \neq \alpha_x:= \E[\exp((2\tau'\ell(\theta, y)- c(y,x))/\epsilon]^{-1}$. In fact, the term $\alpha_\epsilon \beta \exp((2\tau'\ell(\theta, y)- c(y,x))/\epsilon)$ does not integrate to one and thus does not define a valid conditional probability measure. Actually, $\rho^{\KL}(y)$ can be interpreted as a weighted expectation of conditional distributions:
    \[
    \rho^{\KL} = \E_{x\sim\widehat{\rho_N}}[\frac{\alpha_\epsilon \beta}{\alpha_x} \rho_{Y|X=x}]
    \]
    Consequently, the two-layer sampling procedure requires computing the corresponding weights, sampling from the resulting weighted empirical distribution, and subsequently sampling from the conditional distribution. However, the term $\alpha_x$ is intractable in practice. Therefore, unlike the entropy-regularized DRO case, the KL-DRO problem cannot be directly framed as a two-level sampling task.
\end{remark}

\subsection{Hellinger-Kantorovich a.k.a. Wasserstein-Fisher-Rao Gradient Flows}

Consider the WFR gradient system of the energy functional $F$, i.e., the triple $\left( \calP, F, \WFR \right)$, where $\WFR$ is the Wasserstein-Fisher-Rao metric, a.k.a. the \emph{Hellinger-Kantorovich} metric restricted to the probability space.
Specifically, we consider the HK/WFR gradient system associated with reaction-diffusion PDE:
\begin{align}
    {\partial_t \rho}
    &= \alpha\, \mathrm{div} \left( \rho \nabla \frac{\delta F}{\delta \rho} \right) - \beta\, \rho 
    \left(\frac{\delta F}{\delta \rho} - \int \frac{\delta F}{\delta \rho}\dd \rho \right)
    .
\end{align}
The hope of this gradient flow is that the added reaction term on the right-hand side will help the gradient flow to converge more rapidly.
This can be formally seen by checking the time derivative of the energy functional $F$ along the gradient flow \eqref{eq:wfr-gradient-flow}, we have
\begin{align*}
    \partial_t F (\rho_t)
    \overset{\text{(EDB)}}{=}
    -\alpha \int \Bigg|\nabla  \log \frac{\rho}{\pi}(x) \Bigg| ^2\dd \rho (x) 
    -\beta \int \Bigg| \log \frac{\rho}{\pi}(x)
    -
    \KL(\rho | \pi)
    \Bigg| ^2\dd \rho (x) 
    \end{align*}
where we used the Energy-Dissipation Balance (EDB, a.k.a. equality) of gradient flows.
We observe the right-hand side is non-positive, hence its dissipation will be faster than the original pure Wasserstein gradient flow.

However, we must note that there is no global PL or strong convexity analogous to the case of the Wasserstein gradient flow, as discussed in the main text.
A ``warm-start'' condition is necessary as it has been shown that global PL/gradient dominance cannot hold for the Fisher-Rao/(spherical)Hellinger gradient flows; see \citet{mielke2025hellinger,carrillo2024fisher} for details.
There exist fine-grained warm-start initialization conditions in the gradient flow literature; see \citet{luBirthdeathDynamicsSampling2023,luAcceleratingLangevinSampling2019,chen2023sampling}.

For example, \citet{luBirthdeathDynamicsSampling2023}
showed that, after a warm-start initialization in the form of a density-ratio lower bound, it is possible to get an improved exponential convergence rate
than the pure Wasserstein gradient flow as characterized in Lemma~\ref{lemma:gradient-dominance-PL-Wasserstein}.
However, it is important to note that
the specific warm-start initialization condition cannot be verified in our DRO problems.
Nonetheless, in practice, we observe significant speedups using the WFR gradient flow as compared to the pure Wasserstein gradient flow.

\section{Further Methods}

\subsection{Wasserstein Fisher-Rao Gradient Flow-based DRO}\label{sec:wfr-alg}
This subsection details the DRO method based on the Wasserstein Fisher-Rao gradient flow. Algorithm \ref{alg:SDRO-WFR} provides a comprehensive summary of this procedure. Compared to the WGF-based method, this approach further refines the DRO framework by introducing birth-death mechanism.

The WFR gradient flow associated with the energy $F$ in \eqref{eq:conditional-sampling-energy} is governed by the reaction--diffusion PDE \eqref{eq:wfr-gradient-flow}. Discretizing it via interacting particles, each particle $(y^i, w^i)$ evolves through two coupled updates: a \emph{transport update} identical to ULA, and a \emph{reaction update} on the weights:
\begin{align*}
y^i_{t+1} &= y^i_t - \eta\,\nabla \widetilde V_{x,\tau}(y^i_t) + \sqrt{\eta\epsilon/\tau}\,\xi^i_t, &\textrm{(transport)}\\
w^i_{t+1} &= (w^i_t)^{1-\epsilon\eta_w/(2\tau)}\cdot \exp\!\left(-\eta_w\, \widetilde V_{x,\tau}(y^i_t)\right). &\textrm{(reaction)}
\end{align*}
The reaction term increases or decrease the weights of particles based on their energy, low-energy particles gain mass, while high-energy ones lose mass. Once a particle's weight drops below a threshold $w_{\min}$, it is reborn at the location of a randomly selected surviving particle, with the two particles sharing the original mass equally. This birth--death mechanism prevents the wasted computation of tracking particles stuck in local minima, which we observe empirically in Figure~\ref{fig:perturbation_samples} of Appendix~\ref{sec:experiment3}.

The full algorithm is summarized in Algorithm~\ref{alg:SDRO-WFR}.
  
\begin{algorithm}[h!]
\caption{Entropy-regularized Wasserstein DRO via WFR flow}
\label{alg:SDRO-WFR}
\begin{algorithmic}[1]
   \STATE {\bfseries Input:} Empirical distribution $\widehat{\rho_N}$, constraint set $\Theta$, $\tau, \epsilon >0$, stepsize sequence $\{r_s > 0\}_{s=0}^{S-1}$, inner stepsize $\eta$, weight stepsize $\eta_w$, weight threshold $w_{\text{min}}$, inner iterations $T$, number of samples $m$.
  
  \FOR{$s = 0, \dots, S-1$}
    \STATE Sample $x^s \sim \widehat{\rho_N}$
    \STATE Initialize $y_{0}^{i,s} \leftarrow x^s, i=1,\dots,m$
    \FOR{$t = 0, \dots, T-1$}
      \STATE Sample $\xi^{i,s}_t \sim\mathcal{N}(0,I)$
      \STATE $y_{t+1}^{i,s} \leftarrow y_{t}^{i,s} - \eta\nabla\widetilde V_{x^s, \tau}(y_{t}^{i,s}) + \sqrt{\eta\epsilon/\tau} \xi^{i,s}_t$
      \STATE $w_{t+1}^{i,s} \leftarrow (w_{t}^{i,s})^{1-\epsilon\eta_w/ (2\tau)}e^{-\eta_w \widetilde V_{x^s, \tau}(y_{t}^{i,s})}$
      \STATE Normalize weights: $w_{t+1}^{i,s} \leftarrow w_{t+1}^{i,s}/\sum_{j=1}^m w_{t+1}^{i,s}$
      
      \STATE \textbf{Particle death/rejuvenation}: 
      \IF{$w_{t+1}^{i,s} < w_{\text{min}}$}
        \STATE Uniformly select an index $i'$ from $\{j : j \neq i\}$.
        \STATE $y_{t+1}^{i,s} \leftarrow y_{t+1}^{i',s}$, \, $w_{t+1}^{i',s}, w_{t+1}^{i,s}\leftarrow (w_{t+1}^{i,s} + w_{t+1}^{i',s})/2$.
      \ENDIF
    \ENDFOR
    \STATE $\theta^{s+1} \leftarrow \text{Proj}_\Theta (\theta^s-r_s \sum_{i=1}^m w_{T}^{i,s}\nabla_\theta \ell(\theta^{s},y_{T}^{i,s}))$
  \ENDFOR
  \STATE {\bfseries return} $\theta^S$
\end{algorithmic}
\end{algorithm}
\subsection{Stein Variational Gradient-based DRO}\label{sec:svg}
Stein Variational Gradient Descent (SVGD) is a variational inference algorithm that approximates a target probability density $p(x)$ by iteratively transporting a set of particles $\{ x_i \}_{i=1}^n$ \citep{liu2016stein}. The method is formulated as a functional gradient descent on the Kullback-Leibler (KL) divergence, $\text{KL}(q||p)$, where $q$ represents the empirical distribution of the particles. The particle updates are governed by a velocity field $\phi(x_i)$ that corresponds to the direction of steepest descent. For an empirical measure of $n$ particles, this update is given by:
\begin{equation*}
\boldsymbol{\phi}(x_i) \propto \frac{1}{n} \sum_{j=1}^n \left[ k(x_j, x_i) \nabla_{x_j} \log p(x_j) + \nabla_{x_j} k(x_j, x_i) \right]
\end{equation*}
This update rule can be decomposed into two functional components: (i) a driving force, which is a kernel-smoothed average of the score function $\nabla \log p(x)$ that directs particles towards the modes of the target distribution, and (ii) a repulsive force, which arises from the kernel gradient $\nabla k(\cdot, \cdot)$ and ensures particle diversity to prevent mode collapse \citep{ba2021understanding}.

The iterative application of this update rule forms the basis of the SVGD algorithm. As detailed in \citep{liu2016stein}, its computational complexity is dominated by the pairwise kernel computations, resulting in a cost of $O(m^2d)$ per iteration for $m$ particles in $d$ dimensions. Regarding convergence, theoretical guarantees have been established under certain assumptions. As shown by \citep{salim2022convergence}, for target distributions that satisfy Talagrand's T1 inequality, a finite-iteration complexity bound is derived. A finite-particle convergence analysis is also provided in \citep{shi2023finite}.

By applying SVGD to sample from the worst-case distribution, we arrive at Algorithm \ref{alg:SDRO-SVG}.

\begin{algorithm}[tb]
\caption{Sinkhorn DRO via SVGD}
\label{alg:SDRO-SVG}
\begin{algorithmic}[1]
  \STATE {\bfseries Input:} Empirical distribution $\widehat{\rho_N}$, constraint set $\Theta$, $\tau, \epsilon >0$, stepsize sequence $\{r_s > 0\}_{s=0}^{S-1}$, inner stepsize $\eta$, inner iterations $T$, initial deviation $\sigma$, number of samples $m$.
  
  \FOR{$s = 0, \dots, S-1$}
    \STATE Sample $x^s \sim \widehat{\rho_N}$
    \STATE Sample $\xi^{s,i}\sim\mathcal{N}(0,\sigma I) ,\, i=1,\dots,m$
    \STATE Initialize $y^{s,i}_0 \leftarrow x^s + \xi^{s,i}$
    \FOR{$t = 0, \dots, T-1$}
      \STATE $\phi^{s,i}_t \leftarrow \frac{1}{m}\sum_{j=1}^m\left[-k(y^{s,i}_t, y^{s,j}_t)\cdot \frac{2\tau}{\epsilon}\nabla\widetilde V_{x^s, \tau}(y^{s,j}_t) + \nabla_2 k(y^{s,i}_t, y^{s,j}_t) \right]$
      \STATE $y^{s,i}_{t+1}\leftarrow y^{s,i}_t + \eta \phi^{s,i}_t$
    \ENDFOR
    \STATE $\theta^{s+1} \leftarrow \text{Proj}_\Theta (\theta^s-r_s \sum_{i=1}^m \frac{1}{m}\nabla_\theta \ell(\theta^{s},y^{s,i}_T))$
  \ENDFOR
  \STATE {\bfseries return} $\theta^S$
\end{algorithmic}
\end{algorithm}

However, the empirical performance of Algorithm~\ref{alg:SDRO-SVG} is suboptimal. We observe that the convergence is sensitive to the initialization of the particles. Furthermore, when the number of particles $m$ is small, the magnitude of the driving force to dominate the repulsive force, thereby diminishing the variance. In this regime, the SVGD algorithm degenerates into a simple gradient descent method, losing its sampling capabilities. This behavior is demonstrated empirically in Appendix~\ref{sec:experiment3}.

\subsection{Restricted Gaussian Oracle-based DRO}\label{sec:rgo}

In this section, we explore an alternative approach for sampling from the worst-case distribution based on Proximal Sampler. The \textit{Proximal Sampler} \citep{lee2021structured} was introduced to unbiasedly sample from Gibbs distributions of the form $\nu(x) \propto \exp(-f(x))$. A single iteration of the proximal sampler consists of the following two steps:
\begin{enumerate}
    \item (\textbf{Forward step}): Sample $y_k \mid x_k \sim \nu_\tau^{Y|X}(\cdot \mid x_k) = \mathcal{N}(x_k, \tau I)$. This results in a new iterate $y_k \sim \rho^Y_k$, where $\rho^Y_k = \rho^X_k * \mathcal{N}(0, \tau I)$.
    \item (\textbf{Backward step}): Sample $x_{k+1} \mid y_k \sim \nu_\tau^{X|Y}(\cdot \mid y_k)$. This gives the next iterate $x_{k+1} \sim \rho^X_{k+1}$.
\end{enumerate}

As shown in recent work \citep{chen2022improved}, this forward-backward procedure is equivalent to an entropy-regularized Jordan-Kinderlehrer-Otto (JKO) scheme :
\begin{align}\label{eq:forward_app}
   \rho_k^Y &= \arg\min_{\mu \in P_2(\mathbb{R}^d)} \left\{ \frac{1}{2\tau} W^2_{2\tau}(\rho_{k}^{X}, \mu) \right\}, \\
   \rho_{k+1}^{X} &= \arg\min_{\mu \in P_2(\mathbb{R}^d)} \left\{ \int f \, d\mu + \frac{1}{2\tau} W^2_{2\tau}(\rho_{k}^{Y}, \mu) \right\}, \label{eq:backward_app}
\end{align}
This connection to the entropy-regularized JKO scheme provides a new perspective to our problem. By reformulating our original Lagrangian problem (\eqref{eq:main-ent-dro}) as a minimization and specializing the cost to the squared Euclidean distance, we obtain:
\begin{equation}\label{pb:sinkhornscheme_modified}
    \Phi(\theta)=\min_{\rho\in\calP} \{\E_{y\sim\rho}[-\ell(\theta, y)] + \frac{1}{2\tau} W^2_{\epsilon}(\rho,\widehat{\rho_N})\},
\end{equation}
which precisely matches the form of the problem \eqref{eq:backward_app} solved by backward step. This structural equivalence suggests that methods developed for proximal sampler can be directly applied to solve the entropy-regularized DRO problem. Specifically, we can employ the \textit{Restricted Gaussian Oracle (RGO)} from \citet{lee2021structured} to solve this. The sampling process for a given $x \sim \widehat{\rho_N}$ involves two steps:
\begin{enumerate}\label{alg:rejection-sampling}
    \item Sample $x$ from $\widehat{\rho_N}$, and compute the minimizer \( y^*_{x, \tau} = \arg\min_{y \in \mathbb{R}^d} \{\widetilde V_{x, \tau}(y)\} \).
    \item Repeat until acceptance: draw a sample \( Z \sim \mathcal{N} \left( y^*_{x, \tau}, \frac{\epsilon}{2(1 - L\tau)} I \right) \) and accept it with probability
    \[
    \exp \left(- \widetilde V_{x, \tau}(Z) +\widetilde V_{x, \tau}(y^*_{x, \tau}) + \frac{2(1 - L\tau)}{\epsilon} \| Z - y^*_{x, \tau} \|^2 \right).
    \]
\end{enumerate}

The validity of the RGO sampler requires the loss function $\ell$ to be $L$-smooth, and the penalty parameter $\tau$ must satisfy $\tau < 1/L$. This condition ensures that the auxiliary function $\widetilde V_{x, \tau}(y)$ is $(\frac{2(1-L\tau)}{\epsilon})$-strongly convex, which is crucial for rejection sampling. Applying RGO to the entropy-regularized DRO problem necessitates that the loss function is $L$-smooth and requires solving a convex optimization subproblem in each step. These requirements on the loss function and the computational structure are analogous to the WRM algorithm presented in \citep{sinha2020certifyingdistributionalrobustnessprincipled} for the specific case of a squared Euclidean cost. However, a key difference exists: the WRM algorithm transports each data point to a single worst-case point, whereas in entropy-regularized DRO, each data point induces a full continuous distribution of worst-case scenarios. 
\begin{algorithm}[tb]
\caption{Sinkhorn DRO via RGO}
\label{alg:SDRO_rgo}
\begin{algorithmic}[1]
  \STATE {\bfseries Input:} Empirical distribution $\widehat{\rho_N}$, constraint sets $\Theta$, $\tau, \epsilon > 0$, stepsize sequence $\{r_s > 0\}^{S-1}_{s=0}$
  
  \FOR{$s = 0, \dots, S-1$} 
    \STATE Sample $x^s \sim \widehat{\rho_N}$ and find an $\eta$-approximate minimizer $\hat{y}^s$ of $\widetilde{V}_{\tau,x^s}(y)$
    \STATE Generate samples $\{y^{s,i}\}_{i=1}^m$ via rejection sampling
    \STATE $\theta^{s+1} \leftarrow \text{Proj}_\Theta (\theta^s-\frac{r_s}{m}\sum^m_{i=1} \nabla_\theta \ell(\theta^{s}, y^{s,i}))$
  \ENDFOR
  \STATE {\bfseries return} $\theta^S$
\end{algorithmic}
\end{algorithm}

\begin{remark}
Although WRM\citep{sinha2020certifyingdistributionalrobustnessprincipled} assumes an $L$-smooth loss, its algorithm is “parameter-free” – it never needs to know $L$ and works whenever the inner optimization problem admits a closed-form solution. In contrast, Algorithm \ref{alg:SDRO_rgo} requires explicit knowledge of $L$ and cannot be applied if the loss fails to be $L$-smooth. In practice, we may assume $\tau$ is sufficiently small such that $\tau < 1/L$, and $\ell$ is $L$-smooth for all $y$.
\end{remark}

\section{Proofs of Theoretical Results} \label{ap:complexity_proofs}

\subsection{Proof of Proposition \ref{lem:sb-klform}}
\begin{proof}[Proof of Proposition \ref{lem:sb-klform}]
    We rewrite the problem using the definition of the entropy-regularized OT formulation:
    \begin{align}
    \min_{\rho} \min _{\Pi}
    \biggl\{
    \int V \dd \rho + \frac{1}{2\tau }
     \int c(x,y) \dd\Pi + \frac\epsilon{2\tau}\int \log \Pi \dd\Pi
    \biggr\}
    \end{align}
    subject to the margianl constraints $\int \Pi \dd x = \rho$ and $\int \Pi \dd y = \rho_0$.
    Here,
    $\rho_0$ can be set to empirical distribution $\widehat{\rho_N}$ as in the data-driven DRO problem.
    Using the marginal constraints, 
    we can then write the first linear term as
    $\int \int V(y) \dd \Pi(x, y)$ and eliminate the variable $\rho$, i.e., one end is unconstrained -- hence it's the half bridge problem:
    \begin{align}
         \min _{\Pi}
        \biggl\{
        \int V \dd \Pi + \frac{1}{2\tau }
         \int c(x,y) \dd\Pi + \frac\epsilon{2\tau}\int \log \Pi \dd\Pi
         \, \,
         \bigg |
         \int \Pi \dd y =  \rho_0
         \biggr\}
         .
        \end{align}
    After straightforward linear optimization,
    we obtain the optimal solution to the variational problem~\eqref{eq:proximal-EOT} as
    \begin{align}
        \rho^*(y) = \E_{x\sim \rho_0} \left[
            \frac{1}{Z_x}\exp
            \left[{-\frac{2\tau V(y) +c(y, x) }{\epsilon}} \right]    
         \right],\quad   Z_x = \int \exp\!\left[-\frac{2\tau V(y)+c(y,x)}{\epsilon}\right]\dd y.  
    \end{align}
    Using elementary manipulations, the variational problem \eqref{eq:proximal-EOT}
    can also be written down as the minimization of the expected KL divergence:
    \begin{align}
        \min_{\rho_{Y|X}}
    \mathbb{E}_{x\sim\rho_0}
    		\KL
            \left(
            \rho_{Y|X=x}(y) 
            \bigg| 
    		\dfrac{1}{Z_x}
            \exp \left[
    					-\dfrac{2\tau V(y) +  c(y,x)}{\epsilon} 
    		\right]
            \right)
            .
    \end{align}
\end{proof}

\subsection{Proof of Proposition~\ref{prop:gradient_oracle_error_control}}
First, we recall some standard results.
\begin{lemma}
    \label{lemma:displacement-convexity}
    Suppose the first condition in Definition~\ref{as:gf-properties} holds for some $\lambda > 0$. Then,
    $F$ is $\lambda$-displacement convex.
\end{lemma}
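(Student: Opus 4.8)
The plan is to exploit the explicit decomposition of $F$ already recorded in \eqref{eq:conditional-sampling-energy}. Since the transport cost to a Dirac reduces to $W_c^2(\rho, \delta_x) = \int c(y,x)\,\dd\rho(y)$ and, by definition, $\widetilde V = V + \tfrac{1}{2\tau}c(\cdot, x)$, that identity shows, up to an additive constant independent of $\rho$,
\[
F(\rho) = \int \widetilde V\,\dd\rho + \frac{\epsilon}{2\tau}\int \rho\log\rho,
\]
i.e. a potential energy $\mathcal V(\rho) := \int \widetilde V\,\dd\rho$ plus a positive multiple of the entropy $\mathcal H(\rho) := \int \rho\log\rho$. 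Displacement convexity is insensitive to additive constants, so it suffices to establish the geodesic-convexity moduli of these two pieces separately and add them along a common geodesic.

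For the potential term, I would invoke the standard correspondence between pointwise convexity of $\widetilde V$ and displacement convexity of $\mathcal V$. The first condition of Definition~\ref{as:gf-properties}, $\nabla^2 \widetilde V \geq \lambda I$, is precisely $\lambda$-convexity of $\widetilde V$ on the state space. Fixing a constant-speed Wasserstein geodesic $\rho_t = ((1-t)\,\mathrm{id} + tT)_{\#}\rho_0$ determined by the optimal transport map $T$ from $\rho_0$ to $\rho_1$ (McCann's displacement interpolation), I apply the pointwise inequality
\[
\widetilde V\big((1-t)y + tT(y)\big) \leq (1-t)\widetilde V(y) + t\,\widetilde V(T(y)) - \tfrac{\lambda}{2}t(1-t)\,\lvert y - T(y)\rvert^2,
\]
integrate against $\rho_0$, and use the pushforward identity $\int \widetilde V(T(y))\,\dd\rho_0 = \int \widetilde V\,\dd\rho_1$ together with $\int \lvert y - T(y)\rvert^2\,\dd\rho_0 = W_2^2(\rho_0,\rho_1)$. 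This yields $\lambda$-displacement convexity of $\mathcal V$.

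For the entropy term, I would appeal to McCann's classical theorem: the internal-energy density $U(r) = r\log r$ satisfies McCann's condition (that $r \mapsto r^d U(r^{-d})$ be convex and non-increasing), so $\mathcal H$ is $0$-displacement convex, and scaling by the positive constant $\epsilon/(2\tau)$ preserves this. Adding the two contributions evaluated along the same displacement interpolation gives that $F$ is $(\lambda + 0)$-displacement convex, which is the assertion of Lemma~\ref{lemma:displacement-convexity}.

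The only genuinely nontrivial ingredient is the geodesic convexity of the entropy, which is the classical result of McCann; I would cite it (for instance through \citet{ambrosio2008gradient}) rather than reprove it here. The remaining steps amount to the routine pushforward manipulation of the potential energy along the displacement interpolation, so the crux is merely to assemble these standard facts and to verify that the convexity moduli add correctly, with the entropy contributing nothing to the final modulus $\lambda$.
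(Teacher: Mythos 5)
Your proof is correct, and it is essentially the argument the paper relies on: the paper itself gives no proof of this lemma, stating it under ``First, we recall some standard results'' and implicitly deferring to the classical gradient-flow literature (e.g., \citet{ambrosio2008gradient}). Your route --- identifying $W_c^2(\rho,\delta_x)=\int c(y,x)\,d\rho(y)$ so that $F$ reduces, up to a $\rho$-independent constant, to the potential energy $\int \widetilde V\,d\rho$ plus $\tfrac{\epsilon}{2\tau}$ times the entropy, then adding the convexity moduli ($\lambda$ for the $\lambda$-convex potential via the pointwise inequality pushed forward along McCann's interpolation, $0$ for the entropy by McCann's theorem) --- is precisely the standard proof being cited, so there is nothing to flag.
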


\begin{lemma}[WGF gradient dominance/PL]
    \label{lemma:gradient-dominance-PL-Wasserstein}
    Let $\rho_t$ be a solution to the Wasserstein gradient flow of the KL divergence energy functional $\KL(\rho | \exp(-\widetilde V_{x, \tau}))$. Then,
\begin{align*}
    \text{$\widetilde V$ is $\lambda$-displacement convex or $\lambda$-PL}
    \implies
    \KL(\rho_t| \exp(-\widetilde V_{x, \tau})) 
    \leq
    \ee^{-2\lambda t} \KL(\rho_0| \exp(-\widetilde V_{x, \tau})).
\end{align*}
\end{lemma}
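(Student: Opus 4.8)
The plan is to reduce the statement to a single scalar differential inequality for $t \mapsto \KL(\rho_t \,|\, \pi)$, where $\pi \propto \exp(-\widetilde V)$, and integrate it by Grönwall. First I would identify the Wasserstein gradient flow of $F(\cdot)=\KL(\cdot\,|\,\pi)$ with the Fokker--Planck equation $\partial_t\rho_t = \nabla\cdot(\rho_t\nabla\log\tfrac{\rho_t}{\pi})$ as in \eqref{eq:fokker-planck}, and compute the dissipation rate via the energy--dissipation balance. This gives the de Bruijn-type identity
\[
\frac{d}{dt}\KL(\rho_t\,|\,\pi) = -\int \Big\|\nabla\log\tfrac{\rho_t}{\pi}\Big\|^2 \, d\rho_t =: -I(\rho_t\,|\,\pi),
\]
i.e. the negative relative Fisher information, which is already nonpositive; the quantitative rate is the actual content of the lemma.

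Next I would turn each branch of the hypothesis into the entropy--entropy-production (log-Sobolev) inequality $I(\rho\,|\,\pi)\geq 2\lambda\,\KL(\rho\,|\,\pi)$ along the trajectory. For the PL branch this is essentially immediate, since by Definition~\ref{as:gf-properties} the functional PL inequality \eqref{eq:generalized-PL} is equivalent to the $\lambda$-LSI \eqref{eq:LSI-along-GF}. For the displacement-convexity branch I would use Lemma~\ref{lemma:displacement-convexity} to obtain that $F$ is $\lambda$-geodesically convex in the Wasserstein space, and then invoke the standard metric-space fact that $\lambda$-geodesic convexity forces the squared metric slope to dominate the energy gap, $|\partial F|^2(\rho)\geq 2\lambda\,(F(\rho)-\inf F)$; since $\inf F=\KL(\pi\,|\,\pi)=0$ and the Wasserstein slope of the KL functional equals $I(\rho\,|\,\pi)$, this is again the claimed inequality.

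Combining the dissipation identity with the log-Sobolev inequality yields the autonomous inequality $\tfrac{d}{dt}\KL(\rho_t\,|\,\pi)\leq -2\lambda\,\KL(\rho_t\,|\,\pi)$, and Grönwall's lemma integrates it to $\KL(\rho_t\,|\,\pi)\leq e^{-2\lambda t}\KL(\rho_0\,|\,\pi)$, which is the assertion.

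The main obstacle I anticipate is the rigorous justification of the two identities used formally above: the energy--dissipation computation requires sufficient regularity and integrability of $\rho_t$ to integrate by parts and differentiate under the integral, and the identification of the Wasserstein metric slope of $\KL(\cdot\,|\,\pi)$ with the relative Fisher information is delicate at points where the slope may be infinite. I would either work under the standing assumption that the flow is smooth and strictly positive, so that every quantity is finite and the formal calculation is valid, or appeal to the abstract curves-of-maximal-slope theory of \cite{ambrosio2008gradient}, in which the exponential decay estimate for $\lambda$-geodesically convex functionals is obtained directly without these ad hoc regularity computations.
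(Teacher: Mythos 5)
Your proof is correct and coincides with the argument the paper itself relies on: the paper states this lemma without proof, recalling it as a standard result, and the intended mechanism is exactly your chain --- the de Bruijn/energy--dissipation identity $\frac{\dd}{\dd t}\KL(\rho_t|\pi) = -I(\rho_t|\pi)$, the entropy--entropy-production (LSI) inequality obtained from either hypothesis (via Lemma~\ref{lemma:displacement-convexity} and the metric-slope bound for $\lambda$-geodesically convex functionals in the displacement-convex branch), and Gr\"onwall --- with the same EDB computation appearing explicitly in the paper's appendix treatment of the WFR flow. One caution you have implicitly handled correctly: the ``$\lambda$-PL'' hypothesis must be read as the \emph{functional} PL/LSI of item~3 in Definition~\ref{as:gf-properties} (which is how the paper uses it in Proposition~\ref{prop:gradient_oracle_error_control}), not the pointwise inequality $|\nabla \widetilde V|^2 \ge \lambda \widetilde V$ for the potential, since the latter does not by itself imply the log-Sobolev inequality for $\pi \propto \exp(-\widetilde V)$.
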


\begin{lemma}[Talagrand's inequality]
    Under the same assumptions as in Lemma \ref{lemma:gradient-dominance-PL-Wasserstein}, we have
    \label{lemma:talagrand-inequality}
\[
W_2^2(\rho_t, \exp(-\widetilde V_{x, \tau})) \leq
\sqrt{\frac2\lambda \KL(\rho_t | \exp(-\widetilde V_{x, \tau}))} 
\]
\end{lemma}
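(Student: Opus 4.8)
The statement is precisely Talagrand's $\mathrm{T}_2$ transportation inequality for the Gibbs measure $\pi \propto \exp(-\widetilde V_{x,\tau})$, and the plan is to derive it from the log-Sobolev inequality~\eqref{eq:LSI-along-GF} --- guaranteed by the hypothesis, since condition~1 of Definition~\ref{as:gf-properties} implies condition~3 --- via the dynamic (Otto--Villani) argument carried out along a Wasserstein gradient flow, exactly as in Lemma~\ref{lemma:gradient-dominance-PL-Wasserstein}. It suffices to prove the static bound $W_2(\mu,\pi)\le\sqrt{\tfrac2\lambda\KL(\mu\,|\,\pi)}$ for an arbitrary probability measure $\mu$ and then specialize to $\mu=\rho_t$. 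To this end I fix $\mu$ and run the Wasserstein gradient flow $(\mu_s)_{s\ge0}$ of the energy $F=\KL(\cdot\,|\,\pi)$ with $\mu_0=\mu$; by Lemma~\ref{lemma:gradient-dominance-PL-Wasserstein} this flow converges to $\pi$ with $\KL(\mu_s\,|\,\pi)\to0$.

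The two structural identities of this flow drive the proof. Writing $I(\mu_s\,|\,\pi):=\int\|\nabla\log(\mu_s/\pi)\|^2\,\dd\mu_s$ for the relative Fisher information, the energy-dissipation equality gives $\tfrac{\dd}{\dd s}\KL(\mu_s\,|\,\pi)=-I(\mu_s\,|\,\pi)$, while the metric-speed identity for the KL gradient flow gives $|\dot\mu_s|^2=I(\mu_s\,|\,\pi)$, where $|\dot\mu_s|$ is the metric derivative in $(\calP,W_2)$. Bounding the endpoint distance by the length of the flow curve then yields
\begin{align}
W_2(\mu,\pi)\;\le\;\int_0^\infty|\dot\mu_s|\,\dd s\;=\;\int_0^\infty\sqrt{I(\mu_s\,|\,\pi)}\,\dd s.
\end{align}

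It remains to control this integral using the LSI. Setting $h(s):=\KL(\mu_s\,|\,\pi)$, so that $h'=-I$ and $h(\infty)=0$, I differentiate $\sqrt{h}$ to get $-\tfrac{\dd}{\dd s}\sqrt{h}=I/(2\sqrt{h})$ and invoke~\eqref{eq:LSI-along-GF} in the form $2\lambda h\le I$, i.e.\ $\sqrt{h}\le\sqrt{I/(2\lambda)}$, obtaining the pointwise bound $-\tfrac{\dd}{\dd s}\sqrt{h}\ge\sqrt{\lambda/2}\,\sqrt{I}$, equivalently $\sqrt{I}\le\sqrt{2/\lambda}\,\bigl(-\tfrac{\dd}{\dd s}\sqrt{h}\bigr)$. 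Integrating over $[0,\infty)$ and using $h(\infty)=0$ collapses the bound to $W_2(\mu,\pi)\le\sqrt{2/\lambda}\,\sqrt{h(0)}=\sqrt{\tfrac2\lambda\KL(\mu\,|\,\pi)}$. Taking $\mu=\rho_t$ gives the claimed inequality.

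The main obstacle is rigor rather than algebra: the length bound and the identification $|\dot\mu_s|^2=I$ concern absolutely continuous curves and the energy-dissipation equality in $(\calP,W_2)$, so they must be justified within the Ambrosio--Gigli--Savar\'e theory of metric gradient flows rather than by the formal computation above. In particular one must verify that the curve has finite length --- which the LSI-driven exponential decay $h(s)\le e^{-2\lambda s}h(0)$ from Lemma~\ref{lemma:gradient-dominance-PL-Wasserstein} supplies --- and that the LSI holds along the entire flow, guaranteed here by displacement convexity / the Bakry--\'Emery criterion under condition~1 of Definition~\ref{as:gf-properties}. A secondary point is justifying the convergence $\mu_s\to\pi$ in $W_2$ needed to equate the endpoint distance with the full-time length, which again follows from the finite-length estimate.
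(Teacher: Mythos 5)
Your argument is correct: it is the classical Otto--Villani dynamic derivation of Talagrand's $T_2$ inequality from the log-Sobolev inequality, and all the steps check out (the energy-dissipation identity $h' = -I$, the metric-speed identity $|\dot\mu_s|^2 = I(\mu_s\,|\,\pi)$, the pointwise bound $\sqrt{I}\le\sqrt{2/\lambda}\,\bigl(-\tfrac{d}{ds}\sqrt{h}\bigr)$ from LSI, and the length-of-curve bound, with the rigor caveats you flag handled correctly in the Ambrosio--Gigli--Savar\'e framework). There is no paper proof to compare against: the paper recalls this lemma as a standard result, without proof, alongside Lemmas~\ref{lemma:displacement-convexity} and~\ref{lemma:gradient-dominance-PL-Wasserstein}, so you have supplied exactly the standard argument behind the citation. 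Two remarks. First, the paper's statement contains a typo: the left-hand side reads $W_2^2$ while the right-hand side is $\sqrt{\tfrac{2}{\lambda}\KL(\rho_t\,|\,\exp(-\widetilde V_{x,\tau}))}$; what you prove, $W_2 \le \sqrt{\tfrac{2}{\lambda}\KL}$, is the correct standard form and is precisely what the paper uses downstream in deriving \eqref{eq:marginal_w1_decay} (combining $W_1\le W_2$ with the exponential KL decay of Lemma~\ref{lemma:gradient-dominance-PL-Wasserstein}), so your version is the one the paper actually needs. Second, a caveat on hypotheses: the ``same assumptions'' in Lemma~\ref{lemma:gradient-dominance-PL-Wasserstein} read ``$\lambda$-displacement convex or $\lambda$-PL.'' Your LSI-based route covers the convex branch via Bakry--\'Emery and the functional-PL branch, which is equivalent to LSI by \eqref{eq:generalized-PL}--\eqref{eq:LSI-along-GF}; but the pointwise gradient-dominance condition \eqref{eq:gradient-dominance-PL} on $\widetilde V$ does not by itself imply an LSI for $\pi\propto\exp(-\widetilde V_{x,\tau})$, so under that reading of the hypothesis neither your proof nor the lemma as stated goes through without the LSI branch --- worth stating explicitly, as you partially do when you invoke condition~1 of Definition~\ref{as:gf-properties}.
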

Recall also that $W_1(\rho_t, \pi_{Y|X=x}) \leq W_2(\rho_t, \pi_{Y|X=x})$.

\textbf{From conditional to marginal bound}
We have the following relation between the marginal and conditional KL divergences.
Suppose $\rho_X$ is the marginal distribution of $X$.
Recall the relation $\pi_Y = \int_x \rho^* _{Y|X=x} \dd \rho_X(x)$.
Then,
\[
\KL(\rho_Y | \pi_{Y}) \leq \mathbb{E}_{X \sim \rho_X} \left[ \KL\left(\rho_{Y|X}  \middle| \pi_{Y|X}\right) \right].
\]

Alternatively, the Wasserstein relation gives
\[
W_p(\rho_Y, \pi_Y) \leq \mathbb{E}_{X \sim \rho_X} \left[ W_p(\rho_{Y|X}(\cdot|X), \pi_{Y|X}(\cdot|X)) \right].
\]

\textbf{Marginal distribution error control}
Suppose we have the samples
$y^i_t\sim \rho_{t,Y}$,
Our goal is to use the empirical measure
$\displaystyle\widehat{\rho_{t,Y}} = \frac1N \sum_{i=1}^N \delta_{y^i_t}$
to approximate $\pi_Y$.
 
Taking expectations and applying the triangle inequality, we have
\begin{align}
    \mathbb{E}[W_1(\widehat{\rho_{t,Y}}, \pi_Y)]
    &\leq \mathbb{E}[W_1(\widehat{\rho_{t,Y}}, \rho_{t,Y})] + W_1(\rho_{t,Y}, \pi_Y),
    \label{eq:marginal_w1_decomp}
\end{align}
where the first term is from particle approximation, which vanishes as $N\to\infty$. And the second term is the optimization error along the gradient flow. By Lemma~\ref{lemma:gradient-dominance-PL-Wasserstein} and Lemma~\ref{lemma:talagrand-inequality}, together with the conditional-to-marginal Wasserstein bound,
\begin{align}
    W_1(\rho_{t,Y}, \pi_Y) 
    &\leq \mathbb{E}_{X \sim \rho_X} \left[ W_2(\rho_{t,Y|X}, \pi_{Y|X}) \right]
    \lesssim 
    \frac{{\ee^{-\lambda t}}}{\sqrt{\lambda}}.
    \label{eq:marginal_w1_decay}
\end{align}

Therefore, to reach a $\delta_\text{sample}$-solution in expectation, we need to run the gradient flow (i.e. simulating the PDE/SDE) for time
\begin{align}
    t \gtrsim \frac{1}{\lambda}\log{\frac1{\sqrt{\lambda}\delta_\text{sample}}}
    .
    \label{eq:gf_time_to_epsilon}
\end{align}
  
We will later show the discrete-time gradient descent version of this result.

We emphasize that the continuous-time gradient flow result is important not only for understanding the limit of discrete-time algorithms, but also for serving as a guide for designing new discrete-time algorithms.
Let us get a quick insight from the above estimate:
suppose we discretized algorithm with discretization stepsize $\eta$ for in total $T$ steps. Ignoring the discretization error, we have $t = T\eta$ and the (ideal) number of iterations needed to reach a $\delta_\text{sample}$-solution
of the inner maximization problem~\eqref{eq:entropic-jko}
is $T \gtrsim \frac{1}{\eta  \lambda}\log{\frac1{\sqrt{\lambda}\delta_\text{sample}}}$.

\textbf{DRO gradient oracle error control via gradient flow}
Let us now show how to convert the error estimate in distributions to the error estimate in the gradient oracle, needed for the optimization analysis later.
If   $\mathbb{E}[W_1(\widehat{\rho_{t,Y}}, \pi_Y)] \leq \delta_\text{sample}$    for some $\delta_\text{sample} > 0$ (as controlled by \eqref{eq:marginal_w1_decay}) and $g$ is $L$-Lipschitz (w.r.t. Euclidean norm).
Then,
  by Kantorovich--Rubinstein duality \citep{kantorovich1942translocation},  
we obtain:
\begin{align}
    \label{eq:gradient_oracle_error_control_expectation}
     \mathbb{E}\!\left[\,  \left| \int g\, d(\widehat{\rho_{t,Y}} - \pi_Y) \right| \,\right]  
\leq L  \cdot\mathbb{E}[W_1(\widehat{\rho_{t,Y}}, \pi_Y)]  
\leq L\delta_\text{sample}.
\end{align}

In the optimization analysis, we can later take $g$ to be the gradient oracle of the DRO loss, $g:=\nabla_\theta \ell(\theta, \cdot)$, to control the gradient oracle error.
Recall that $\widehat{\rho_{t,Y}} = \frac1N \sum_{i=1}^N \delta_{y^i_t}$ is the particle approximation obtained by running the gradient flow sampler for $t$ steps, and $\pi_Y$ is the target marginal distribution, which is the worst-case distribution of DRO inner maximization problem~\eqref{eq:entropic-jko}.
Then, 
\eqref{eq:gradient_oracle_error_control_expectation} results in the following
bound on the gradient oracle error:
\begin{multline}
    \label{eq:gradient_oracle_error_control}
 \mathbb{E}\!\left[\,  \bigg|\frac1N \sum_{i=1}^N \nabla_\theta \ell(\theta, y^i_t) 
-
\nabla_\theta 
\underbrace{
\max _{\rho \in \calP} 
\biggl(
    \int \ell(\theta, z)\dd\rho{(z)}
    -\frac1{2\tau} W_\epsilon^2(\rho, \widehat{\rho_N})
\biggr)
}_{\text{DRO objective}}
\bigg| \,\right]  
\\
=
 \mathbb{E}\!\left[\,  \bigg|\frac1N \sum_{i=1}^N \nabla_\theta \ell(\theta, y^i_t) 
-
\mathbb{E}_{y \sim \pi_Y}
\nabla_\theta\ell(\theta, y)
\bigg| \,\right]  
\overset{\eqref{eq:gradient_oracle_error_control_expectation}}{\leq}
L\delta_\text{sample}
\end{multline}

Replacing $L\delta_\text{sample}$ by $\epsilon_\text{grad}$, we obtain the desired bound on the gradient oracle error as in Proposition~\ref{prop:gradient_oracle_error_control}.

\subsection{Proof of Theorem \ref{thm:outer_loop} (Outer Loop Convergence)}
\label{ap:proofth1}
We provide a detailed proof for the convergence of the outer loop, which follows the standard analysis for non-convex Stochastic Gradient Descent with a persistent bias.

\begin{lemma}[Bias of the Stochastic Gradient]
\label{lem:bias_grad}
Under Assumptions \ref{as:global_assumptions}.\ref{as:fsmooth}, \ref{as:global_assumptions}.\ref{as:gradest}, the bias of the stochastic gradient estimator, defined as $B(\theta) := \E_{} [\widehat g] - \nabla\Phi(\theta)$, is bounded as:
$$\|B(\theta)\| \le L_f \cdot \delta_{\text{sample}}.$$
\end{lemma}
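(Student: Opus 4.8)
The plan is to reduce the bias to a Wasserstein distance between the sampled and true conditional worst-case distributions, and then convert that distance into a gradient error using the Lipschitz hypothesis. First I would identify $\nabla\Phi(\theta)$ via the envelope (Danskin) theorem: since the regularizer $\tfrac{1}{2\tau}W_\epsilon^2(\rho,\widehat{\rho_N})$ does not depend on $\theta$, and the inner maximization defining $\Phi$ is attained at the worst-case distribution $\pi_Y = \E_{x\sim\widehat{\rho_N}}[\rho^*_{Y|X=x}]$ characterized in Lemma~\ref{lem:sb-klform}, differentiation passes through the maximum to give
\begin{equation*}
\nabla\Phi(\theta) = \E_{y\sim\pi_Y}[\nabla_\theta\ell(\theta,y)] = \E_{x\sim\widehat{\rho_N}}\,\E_{y\sim\rho^*_{Y|X=x}}[\nabla_\theta\ell(\theta,y)].
\end{equation*}
On the other hand, by definition $\E[\widehat g] = \E_{x\sim\widehat{\rho_N}}\,\E_{y\sim\widehat\rho_{Y|X=x}}[\nabla_\theta\ell(\theta,y)]$, so the bias $B(\theta)$ becomes an average over $x\sim\widehat{\rho_N}$ of the difference of expectations of the vector field $y\mapsto\nabla_\theta\ell(\theta,y)$ under $\widehat\rho_{Y|X=x}$ versus $\rho^*_{Y|X=x}$.

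Next I would bound this difference pointwise in $x$ by a Wasserstein distance, reusing the coupling argument already employed in \eqref{eq:gradient_oracle_error_control_expectation}. By Assumption~\ref{as:fsmooth} the map $y\mapsto\nabla_\theta\ell(\theta,y)$ is $L_f$-Lipschitz, so for any coupling $\gamma$ of $(\widehat\rho_{Y|X=x},\rho^*_{Y|X=x})$,
\begin{equation*}
\left\|\E_{\widehat\rho_{Y|X=x}}[\nabla_\theta\ell] - \E_{\rho^*_{Y|X=x}}[\nabla_\theta\ell]\right\| \le \E_\gamma\|\nabla_\theta\ell(\theta,y)-\nabla_\theta\ell(\theta,y')\| \le L_f\,\E_\gamma\|y-y'\|,
\end{equation*}
and taking the infimum over couplings yields the bound $L_f\,W_1(\widehat\rho_{Y|X=x},\rho^*_{Y|X=x}) \le L_f\,W_2(\widehat\rho_{Y|X=x},\rho^*_{Y|X=x})$. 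Finally, applying Jensen's inequality to pull the norm inside the $x$-expectation and invoking the uniform sampling-error control $\E_{x\sim\widehat{\rho_N}}[W_2(\widehat\rho_{Y|X=x},\rho^*_{Y|X=x})]\le\delta_{\text{sample}}$ from Assumption~\ref{as:gradest} gives $\|B(\theta)\|\le L_f\,\delta_{\text{sample}}$.

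\textbf{Main obstacle.} The one genuinely delicate step is the envelope identity $\nabla\Phi(\theta)=\E_{y\sim\pi_Y}[\nabla_\theta\ell(\theta,y)]$, which requires justifying the interchange of differentiation with the inner maximum even though the maximizer $\pi_Y$ itself depends on $\theta$. Assumption~\ref{as:totalsmooth} guarantees that $\Phi$ is differentiable (indeed $L_\Phi$-smooth), and Lemma~\ref{lem:sb-klform} supplies the explicit, unique maximizer $\pi_Y$, which together make Danskin's theorem applicable; the remaining care is ensuring enough integrability and continuity of $\nabla_\theta\ell(\theta,\cdot)$ that the pointwise derivative can be integrated against $\pi_Y$. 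Everything downstream is the routine coupling-and-Jensen chain, so I expect no further difficulty there.
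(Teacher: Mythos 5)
Your proof is correct and follows essentially the same route as the paper's: decompose the bias as an average over $x\sim\widehat{\rho_N}$ of differences of conditional expectations, pull the norm inside via Jensen, bound each conditional difference by $L_f\,W_1 \le L_f\,W_2$ using the Lipschitz--Wasserstein coupling argument, and invoke Assumption~\ref{as:gradest}. The only difference is that you explicitly justify $\nabla\Phi(\theta)=\E_{y\sim\pi_Y}[\nabla_\theta\ell(\theta,y)]$ via Danskin's theorem, a step the paper silently absorbs into its ``by definition'' opening line --- a worthwhile clarification, but not a different proof.
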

\begin{proof}
By definition, the bias is
\begin{align*}
    B(\theta) &= \E_{x \sim \widehat{\rho_N}}\left[\E_{y \sim \widehat\rho_{Y|X=x}}[\nabla_\theta \ell(\theta, y)]\right] - \E_{x \sim \widehat{\rho_N}}\left[\E_{y \sim \pi_{Y|X=x}}[\nabla_\theta \ell(\theta, y)]\right] \\
    &= \E_{x \sim \widehat{\rho_N}}\left[ \E_{y \sim \widehat\rho_{Y|X=x}}[\nabla_\theta \ell(\theta, y)] - \E_{y \sim \pi_{Y|X=x}}[\nabla_\theta \ell(\theta, y)] \right].
\end{align*}
Taking norms and using Jensen's inequality:
$$\|B(\theta)\| \le \E_{x \sim \widehat{\rho_N}}\left[ \| \E_{y \sim \widehat\rho_{Y|X=x}}[\nabla_\theta \ell(\theta, y)] - \E_{y \sim \pi_{Y|X=x}}[\nabla_\theta \ell(\theta, y)] \| \right].$$
  By Assumption~\ref{as:global_assumptions}.\ref{as:fsmooth}, the map $y \mapsto \nabla_\theta \ell(\theta, y)$ is $L_f$-Lipschitz in $y$. Hence by Kantorovich--Rubinstein duality, for any two probability measures $\rho_1, \rho_2$ with finite first moment,
\[\big\|\E_{y\sim\rho_1}[\nabla_\theta \ell(\theta, y)] - \E_{y\sim\rho_2}[\nabla_\theta \ell(\theta, y)]\big\| \le L_f \cdot W_1(\rho_1, \rho_2).\]
Applying this and using $W_1 \le W_2$, we obtain   
\begin{align*}
    \|B(\theta)\| &\le \E_{x \sim \widehat{\rho_N}}\left[ L_f \cdot W_1(\widehat\rho_{Y|X=x}, \pi_{Y|X=x}) \right] \\
    &\le L_f \cdot \E_{x \sim \widehat{\rho_N}}\left[ W_2(\widehat\rho_{Y|X=x}, \pi_{Y|X=x}) \right] \quad (\text{since } W_1 \le W_2) \\
    &\le L_f \cdot \delta_{\text{sample}}.
\end{align*}
\end{proof}

\noindent\textbf{Proof of Theorem \ref{thm:outer_loop}.}
From the $L_\Phi$-smoothness of $\Phi$ (Assumption \ref{as:global_assumptions}.\ref{as:totalsmooth}), we have the standard descent lemma. For simplicity, we assume $\Theta = \mathbb{R}^d$ and a constant stepsize $r_s = r$.
\begin{align*}
    \Phi(\theta^{s+1}) &\le \Phi(\theta^s) + \langle \nabla\Phi(\theta^s), \theta^{s+1} - \theta^s \rangle + \frac{L_\Phi}{2}\|\theta^{s+1} - \theta^s\|^2 \\
    &= \Phi(\theta^s) - r \langle \nabla\Phi(\theta^s), \widehat g^s \rangle + \frac{L_\Phi r^2}{2}\|\widehat{g}^s\|^2.
\end{align*}
Taking expectation conditioned on the filtration $\mathcal{F}_s$:
\begin{align*}
    \E[\Phi(\theta^{s+1})|\mathcal{F}_s] &\le \Phi(\theta^s) - r \langle \nabla\Phi(\theta^s), \E[\widehat{g}^s|\mathcal{F}_s] \rangle + \frac{L_\Phi r^2}{2}\E[\|\widehat{g}^s\|^2|\mathcal{F}_s] \\
    &= \Phi(\theta^s) - r \langle \nabla\Phi(\theta^s), \nabla\Phi(\theta^s) + B(\theta^s) \rangle + \frac{L_\Phi r^2}{2} \left( \|\E[\widehat{g}^s|\mathcal{F}_s]\|^2 + \text{Var}(\widehat{g}^s|\mathcal{F}_s) \right).
\end{align*}
Using the variance bound and the bias definition:
\begin{align*}
    \E[\Phi(\theta^{s+1})|\mathcal{F}_s] &\le \Phi(\theta^s) - r\|\nabla\Phi(\theta^s)\|^2 - r\langle \nabla\Phi(\theta^s), B(\theta^s) \rangle + \frac{L_\Phi r^2}{2} \left( \|\nabla\Phi(\theta^s)+B(\theta^s)\|^2 + \sigma^2 \right).
\end{align*}
Applying Young's inequality to the inner product term:
$$-r\langle \nabla\Phi(\theta^s), B(\theta^s) \rangle \le \frac{r}{2}\|\nabla\Phi(\theta^s)\|^2 + \frac{r}{2}\|B(\theta^s)\|^2.$$
Substituting this in and simplifying with $\|a+b\|^2 \le 2\|a\|^2 + 2\|b\|^2$:
$$\E[\Phi(\theta^{s+1})|\mathcal{F}_s] \le \Phi(\theta^s) - \frac{r}{2}\|\nabla\Phi(\theta^s)\|^2 + \frac{r}{2}\|B(\theta^s)\|^2 + L_\Phi r^2 (\|\nabla\Phi(\theta^s)\|^2 + \|B(\theta^s)\|^2) + \frac{L_\Phi r^2 \sigma^2}{2}.$$
Rearranging terms to isolate $\|\nabla\Phi(\theta^s)\|^2$:
$$r\left(\frac{1}{2} - L_\Phi r\right)\|\nabla\Phi(\theta^s)\|^2 \le \Phi(\theta^s) - \E[\Phi(\theta^{s+1})|\mathcal{F}_s] + \left(\frac{r}{2} + L_\Phi r^2\right)\|B(\theta^s)\|^2 + \frac{L_\Phi r^2 \sigma^2}{2}.$$
Choosing a stepsize $r \le \frac{1}{4L_\Phi}$, we have $(\frac{1}{2} - L_\Phi r) \ge \frac{1}{4}$ and $(\frac{r}{2} + L_\Phi r^2) \le r$. This gives:
$$\frac{r}{4}\|\nabla\Phi(\theta^s)\|^2 \le \Phi(\theta^s) - \E[\Phi(\theta^{s+1})|\mathcal{F}_s] + r\|B(\theta^s)\|^2 + \frac{L_\Phi r^2 \sigma^2}{2}.$$
Taking total expectation and using Lemma~\ref{lem:bias_grad}, $\|B(\theta^s)\|^2 \le (L_f \delta_{\text{sample}})^2$:
$$\frac{r}{4}\E[\|\nabla\Phi(\theta^s)\|^2] \le \E[\Phi(\theta^s)] - \E[\Phi(\theta^{s+1})] + r (L_f \delta_{\text{sample}})^2 + \frac{L_\Phi r^2 \sigma^2}{2}.$$
Summing from $s=0$ to $S-1$ yields a telescoping sum:
$$\frac{r}{4}\sum_{s=0}^{S-1}\E[\|\nabla\Phi(\theta^s)\|^2] \le \Phi(\theta^0) - \E[\Phi(\theta^S)] + Sr(L_f \delta_{\text{sample}})^2 + \frac{S L_\Phi r^2 \sigma^2}{2}.$$
Dividing by $\frac{r S}{4}$ and using $\E[\Phi(\theta^S)] \ge \Phi_{\inf}$:
$$\frac{1}{S}\sum_{s=0}^{S-1}\E[\|\nabla\Phi(\theta^s)|^2] \le \frac{4(\Phi(\theta^0) - \Phi_{\inf})}{r S} + 4(L_f \delta_{\text{sample}})^2 + 2L_\Phi r \sigma^2. $$

Therefore, when $\delta_{\text{sample}}$ is controlled such that $\delta_{\text{sample}}= O(\epsilon_\text{opt}/L_f)$, and step size $r = \sqrt \frac{1}{SL_\Phi\sigma^2}$, and the iteration $S=O(1/\epsilon_\text{opt}^4)$, $\frac{1}{S}\sum_{s=0}^{S-1}\E[\|\nabla\Phi(\theta^s)|^2] \le O(\epsilon_\text{opt}^2)$.

\subsection{Derivation of ULA Sampler Complexity (for Theorem~\ref{thm:ula})}
\label{ap:ula_derivation}
The complexity of the ULA sampler depends on the geometric properties of the inner target distribution $\pi_{Y|X=x}$.
Under Assumption \ref{as:LSI}, standard results on ULA convergence \citep{vempala2019rapid} state that after $T$ steps, the KL-divergence to the target is bounded. To achieve a final KL-divergence of $\delta_{KL}$, the number of iterations required is $T = O\left(\frac{L_U^2 d}{\lambda_U^2 \delta_{KL}}\log \frac{1}{\delta_{KL}}\right) = \tilde{O}\left(\frac{L_U^2 d}{\lambda_U^2 \delta_{KL}}\right)$.

Our goal is to connect the required outer-loop sampling accuracy $\delta_{\text{sample}}$ (in $W_2$ distance) to the required inner-loop KL-divergence accuracy $\delta_{KL}$. Under the LSI assumption, Talagrand's inequality gives the relationship:
$$ W_2^2(\widehat \rho_{Y|X=x} , \pi_{Y|X=x}) \le \frac{2}{\lambda_U} \KL(\widehat \rho_{Y|X=x} || \pi_{Y|X=x}) $$
From Theorem \ref{thm:outer_loop}, we require $W_2(\widehat \rho_{Y|X=x}, \pi_{Y|X=x}) \le \delta_{\text{sample}} = O(\epsilon_{\text{opt}}/L_f)$. This implies we need to achieve a KL-divergence of:
$$ \delta_{KL} \le \frac{\lambda_U}{2} W_2^2(\widehat \rho_{Y|X=x}, \pi_{Y|X=x}) = O\left(\lambda_U \frac{\epsilon_{\text{opt}}^2}{L_f^2}\right). $$
Substituting this required $\delta_{KL}$ into the ULA iteration complexity gives the number of inner loop steps:
$$ T_{ULA} = \tilde{O}\left(\frac{L_U^2 d}{\lambda_U^2 \cdot \lambda_U \frac{\epsilon_{\text{opt}}^2}{L_f^2}}\right) = \tilde{O}\left(\frac{L_U^2 L_f^2 d}{\lambda_U^3 \epsilon_{\text{opt}}^2}\right). $$
The total complexity is the product of outer iterations $S = O(1/\epsilon_{\text{opt}}^4)$, inner iterations $T_{ULA}$, and the cost per inner gradient step $C_{\nabla_z} = O(d)$.
\begin{align*}
\text{Complexity} &= S \times T \times C_{\nabla_{z}} \\
&= O\left(\frac{1}{\epsilon_{\text{opt}}^{4}}\right) \times \tilde{O}\left(\frac{L_U^2 L_f^2 d}{\lambda_U^3 \epsilon_{\text{opt}}^2}\right) \times O(d) \\
&= \tilde{O}\left(\frac{L_U^2 L_f^2 d^2}{\lambda_U^3 \epsilon_{\text{opt}}^6}\right).
\end{align*}
Note that we absorb constants like $L_{\Phi}$ into the $\tilde{O}(\cdot)$ notation. This completes the derivation for Theorem \ref{thm:ula}.

\subsection{Complexity of RGO}
We analyze the computational complexity of RGO method under specific regularity conditions on the loss function.

\begin{theorem}[Complexity of RGO Sampling]
\label{thm:rgo_complexity}
Let the loss function $\ell: \mathbb{R}^d \to \mathbb{R}$ be $L$-smooth. For parameter $\tau = \frac{1}{Ld}$ and a target distribution $\pi_{Y|X=x}$, through the rejection sampling procedure described in Appendix~\ref{sec:rgo}, we can draw a sample from a distribution $\tilde{\rho}_{Y|X=x}$ such that the Kullback-Leibler (KL) divergence satisfies $\KL(\tilde{\rho}_{Y|X=x} \,\Vert\, \pi_{Y|X=x}) < \delta$ with iteration complexity   (where $\epsilon$ is the entropy regularization parameter from~\eqref{eq:main-ent-dro})   
\[ O\!\left(\log\!\left(\frac{1}{\epsilon\delta}\right)\right). \]
\end{theorem}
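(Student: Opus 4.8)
The plan is to present this as a textbook analysis of a Restricted Gaussian Oracle for a well-conditioned strongly log-concave target, the only genuine subtlety being that the mode driving the proposal is computed only approximately. First I would record the geometry of the target. Writing the conditional worst-case density as $\rho^*_{Y|X=x} \propto \exp(-\tfrac{2\tau}{\epsilon}\widetilde V_{x,\tau})$ with $\widetilde V_{x,\tau}(y) = V(y) + \tfrac{1}{2\tau}\|y-x\|^2$, and using $L$-smoothness of $\ell$ (so that $-L I \preceq \nabla^2 V \preceq L I$), the Hessian of the target potential obeys $\tfrac{2(1-L\tau)}{\epsilon} I \preceq \nabla^2\bigl(\tfrac{2\tau}{\epsilon}\widetilde V_{x,\tau}\bigr) \preceq \tfrac{2(1+L\tau)}{\epsilon} I$. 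With the prescribed $\tau = \tfrac{1}{Ld}$ this gives strong convexity and smoothness whose ratio is the condition number $\kappa = \tfrac{1+1/d}{1-1/d} = \tfrac{d+1}{d-1} = O(1)$. This $O(1)$ conditioning, forced by the proximal regularization, is the structural fact that keeps every later estimate dimension-free in the exponent.

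Next I would analyze the rejection step assuming the exact mode $y^*_{x,\tau}$. Because the target potential is simultaneously strongly convex and smooth, the mode-centered Gaussian proposal with the stated covariance dominates the target up to the normalizer, so the stated acceptance weight is a bona fide probability and rejection sampling returns an exact draw from $\rho^*_{Y|X=x}$. The expected number of proposals equals the ratio of the proposal normalizer to the target normalizer; a second-order expansion around $y^*_{x,\tau}$, sandwiched by the smoothness and strong-convexity constants, controls this ratio by a $\kappa^{d/2}$-type quantity that collapses to $O(1)$ precisely because $\kappa = 1 + O(1/d)$ (so $\kappa^{d} = O(1)$). This is exactly the $O(1)$-expected-rejection guarantee of the RGO from \citet{lee2021structured,chen2022improved}, which I would invoke rather than rederive.

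The remaining and only nontrivial step is to replace $y^*_{x,\tau}$ by the approximate minimizer $\hat y$ actually produced in Algorithm~\ref{alg:SDRO_rgo} and to track the resulting bias. Running the sampler with center $\hat y$ produces a law $\tilde\rho_{Y|X=x}$ whose deviation from $\rho^*_{Y|X=x}$ is driven entirely by the shift $\|\hat y - y^*_{x,\tau}\|$; a perturbation estimate against the $\tfrac{1}{\epsilon}$-scale smoothness of the target potential gives $\KL(\tilde\rho_{Y|X=x}\,\|\,\rho^*_{Y|X=x}) \lesssim \tfrac{1}{\epsilon}\|\hat y - y^*_{x,\tau}\|^2$, so enforcing $\KL < \delta$ requires $\|\hat y - y^*_{x,\tau}\| = O(\sqrt{\epsilon\delta})$. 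Since $\widetilde V_{x,\tau}$ is $(\tfrac{1}{\tau}-L)$-strongly convex with $O(1)$ condition number, gradient descent converges linearly, so the number of gradient steps needed to reach this tolerance is $O\bigl(\log \tfrac{1}{\epsilon\delta}\bigr)$.

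Finally I would combine the pieces: each accepted sample costs $O(1)$ proposals in expectation, while the one-time mode computation costs $O\bigl(\log\tfrac{1}{\epsilon\delta}\bigr)$ gradient evaluations, so the overall iteration complexity is $O\bigl(\log\tfrac{1}{\epsilon\delta}\bigr)$, matching the claim. I expect the hard part to be the third step: converting an optimization error $\|\hat y - y^*_{x,\tau}\|$ into a KL bound between the realized and target laws while simultaneously guaranteeing the acceptance weight remains $\le 1$ under the perturbed center, and arranging the dependence on $\epsilon$ and $\delta$ so that it collapses into the single logarithm $\log\tfrac{1}{\epsilon\delta}$ rather than two separate factors.
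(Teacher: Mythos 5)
Your proposal is correct and follows essentially the same route as the paper's proof: establish the $O(1)$ condition number $\kappa=\tfrac{d+1}{d-1}$ of the target potential at $\tau=\tfrac{1}{Ld}$, invoke the $\kappa^{d/2}=O(1)$ expected-rejection bound, convert the approximate-mode error into a KL bias of order $\tfrac{1}{\epsilon}$ times the (squared) optimization error, and conclude with linearly convergent gradient descent giving $O\bigl(\log\tfrac{1}{\epsilon\delta}\bigr)$ total iterations. The only cosmetic differences are that the paper measures the mode-computation accuracy by a tolerance $\delta'$ (requiring $\delta'<\tfrac{\epsilon\delta}{4+4\tau L}$) rather than by the distance $\|\hat y-y^*_{x,\tau}\|$, handles the perturbed acceptance weight by capping it with $\min\{\cdot,1\}$, and combines the two costs as a product rather than a sum, none of which changes the result.
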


\begin{proof}
Under the assumption that $\ell$ is $L$-smooth and $\tau < \frac{1}{L}$, $\widetilde V_{x, \tau}(y)$, is $\frac{2-2\tau L}{\epsilon}$-strongly convex and $\frac{2+2\tau L}{\epsilon}$-smooth.

Let the minimizer of $\widetilde V_{x, \tau}(y)$ as $y^*_{x, \tau}$. To find the minimum of $\widetilde V_{x, \tau}(y)$, we can apply gradient descent. The number of iterations required to achieve a $\delta'$-approximate optimal solution $\tilde{y}$ is determined by the condition number $\kappa = \frac{1+\tau L}{1-\tau L}$. Specifically, the iteration complexity is:
$$ O\left(\kappa \log\left(\frac{1}{\delta'}\right)\right) = O\left(\frac{1+\tau L}{1-\tau L}\log\left(\frac{1}{\delta'}\right)\right) $$

\textbf{2. Distributional Proximity.}
Let the distribution obtained via rejection sampling with a $\delta'$-approximate optimal solution for the proposal be denoted by $\tilde{\rho}_{Y|X=x}$. The procedure defines the proposal distributions $p(y)$ (using the true minimizer $y^*_{x,\tau}$) and $\tilde{p}(y)$ (using the approximate minimizer $\tilde{y}$) as Gaussian distributions:
\[ p(y) = \mathcal{N} \left( y \mid y^*_{x,\tau}, \frac{\epsilon}{2 - 2\tau L} I \right), \quad \tilde{p}(y) = \mathcal{N} \left( y \mid \tilde{y}, \frac{\epsilon}{2 - 2\tau L} I \right) \]
The corresponding acceptance probabilities $a(y)$ and $\tilde{a}(y)$ are given by:
\begin{align*}
    a(y) &= \exp \left( - \widetilde V_{x, \tau}(y) + \widetilde V_{x, \tau}(y^*_{x,\tau}) + \frac{1 - \tau  L }{\epsilon} \| y - y^*_{x,\tau} \|^2 \right) \\
    \tilde{a}(y) &= \min\left\{\exp \left( - \widetilde V_{x, \tau}(y) + \widetilde V_{x, \tau}(\tilde{y}) + \frac{1 - \tau  L }{\epsilon} \| y - \tilde{y} \|^2 \right), 1\right\}
\end{align*}
The resulting KL divergence from the distribution obtained with the true minimizer, $\rho_{\theta,x}$, is
\begin{align}
    \KL(\tilde{\rho}_{Y|X=x}||\pi_{Y|X=x}) &= \int \frac{\tilde{a}(y)\tilde{p}(y)}{\int \tilde{a}(y)\tilde{p}(y)} \log \left(\frac{\frac{\tilde{a}(y)\tilde{p}(y)}{\int \tilde{a}(y)\tilde{p}(y)}}{\frac{a(y)p(y)}{\int a(y)p(y)}}\right) dy\\
    &= \int \frac{\tilde{a}(y)\tilde{p}(y)}{\int \tilde{a}(y)\tilde{p}(y)} \left(\log \left(\frac{\tilde{a}(y)\tilde{p}(y)}{a(y)p(y)}\right) + \log\left(\frac{\int a(y)p(y)}{\int \tilde{a}(y)\tilde{p}(y) }\right) \right) dy
    \\
    &\leq\int \frac{\tilde{a}(y)\tilde{p}(y)}{\int \tilde{a}(y)\tilde{p}(y)}\cdot 2 \left(\widetilde V_{x, \tau}(\tilde{y})-\widetilde V_{x, \tau}(y^*_{x, \tau}) \right) dy
    \\
    &\leq\int \frac{\tilde{a}(y)\tilde{p}(y)}{\int \tilde{a}(y)\tilde{p}(y)}\cdot \frac{4+4\tau L}{\epsilon} \delta'  dy
    = \frac{4+4\tau L}{\epsilon}\delta' 
\end{align}

To ensure the final KL divergence is less than $\delta$, we must set the optimization accuracy to $\delta' < \frac{\epsilon}{4+4\tau L}\delta$. This implies an optimization complexity of $O\left(\frac{1+\tau L}{1-\tau L}\log\left(\frac{4+4\tau L}{\epsilon\delta}\right)\right)$.

\textbf{3. Rejection Sampling Efficiency.}
The expected number of iterations until acceptance is at most $\left(\frac{1+\tau L}{1-\tau L}\right)^{d/2}$\citep{chewi2022query}. Note that if we choose $\tau = \frac{1}{Ld}$, then the
expected number of iterations until acceptance is at most $\left(\frac{1+\tau L}{1-\tau L}\right)^{d/2}  = \left(\frac{1+1/d}{1-1/d}\right)^{d/2} = O(1)$. 
As $d \to \infty$, this expression converges to $e$. Thus, for this choice of $\tau$, the expected number of rejection sampling trials is $O(1)$.

\textbf{4. Total Complexity.}
The total complexity is the product of the optimization complexity and the expected number of rejection sampling iterations. With $\tau = \frac{1}{Ld}$, the condition number becomes $\kappa = \frac{d+1}{d-1}$.
More precisely, the complexity is:
\[
O\left(\left(\frac{d+1}{d-1}\right)^{\frac{d}{2} + 1} \times\log\left(\frac{4d+4}{d\epsilon\delta}\right)  \right) = O(\log\left(\frac{1}{\epsilon\delta}\right))
\]
This completes the proof.
\end{proof}

\begin{remark}[Practical Limitations]
The theoretical complexity presented in Theorem \ref{thm:rgo_complexity} is highly compelling when compared to alternative sampling methodologies. However, its applicability is constrained by stringent underlying assumptions.
\begin{enumerate}
    \item \textbf{Smoothness Requirement:} The analysis presupposes that the loss function $\ell$ is $L$-smooth. In many practical applications, loss functions are non-smooth or exhibit a very large smoothness constant $L$.
    \item \textbf{Parameter Dependency:} The optimal setting for the parameter, $\tau = \frac{1}{Ld}$, is inversely proportional to both the smoothness constant $L$ and the dimension $d$. If $L$ is large, the resulting $\tau$ will be small. This makes the worst-case distribution very close to the original distribution, thereby limiting the robustness conferred by the method.
\end{enumerate}
Consequently, while the theoretical result is intriguing, the method's performance in empirical settings is often suboptimal due to the difficulty in satisfying these idealized conditions.
\end{remark}

\section{Additional Experiments}
\subsection{Ablation Study for Human Face Classification}
\label{sec:ablation}

In this section, we evaluate the sensitivity of the proposed WFR gradient flow sampler to its key hyperparameters. To ensure a controlled comparison, we vary one parameter at a time while keeping the others fixed at their baseline values. The baseline configuration is defined as follows: regularization $\tau=5$, entropy regularization $\epsilon=0.1$, learning rate $\eta=0.01$, number of particles $m=32$, and total iterations $T=10,000$. All qualitative comparisons are conducted using a fixed representative sample from the FFHQ test set to ensure consistency.

Figure \ref{fig:ablation_grid} presents the qualitative results of varying the regularization strength, learning rate, and the number of particles. As shown in Figure \ref{fig:ab_tau}, a small regularization value ($\tau=0.5$) overly constrains the latent code to the source distribution, causing the model to retain original features such as masculine traits. The baseline value of $\tau=5$ achieves a successful attribute transition while effectively preserving the identity of the source image. However, when $\tau$ is excessively large, the sampler fails to maintain the underlying facial structure, leading to unrealistic results.

The learning rate $\eta$ and the number of particles $m$ also play critical roles in the sampling process. As observed in Figure \ref{fig:ab_lr}, a smaller learning rate can mitigate artifacts and distortions in the generated images, providing more stable updates. Regarding the number of particles, Figure \ref{fig:ab_samples} demonstrates that increasing $m$ consistently improves the quality and stability of the generated samples by providing a better approximation of the gradient flow.

We further analyze the entropy regularization $\epsilon$, which governs the stochasticity and diversity of the generated images. As illustrated in the multi-sample comparison in Figure \ref{fig:ab_epsilon}, low entropy leads to nearly deterministic trajectories with minimal variation between samples. At the intermediate baseline of $\epsilon=0.1$, the sampler produces diverse yet realistic adversarial candidates. Conversely, increasing $\epsilon$ to $1.0$ introduces excessive noise that dominates the drift term, resulting in significant image degradation and loss of semantic content.

\begin{figure}[h!]
    \centering
    \begin{minipage}[c]{0.42\textwidth}
        \centering
        \begin{subfigure}[b]{\textwidth}
            \centering
            \includegraphics[width=\textwidth]{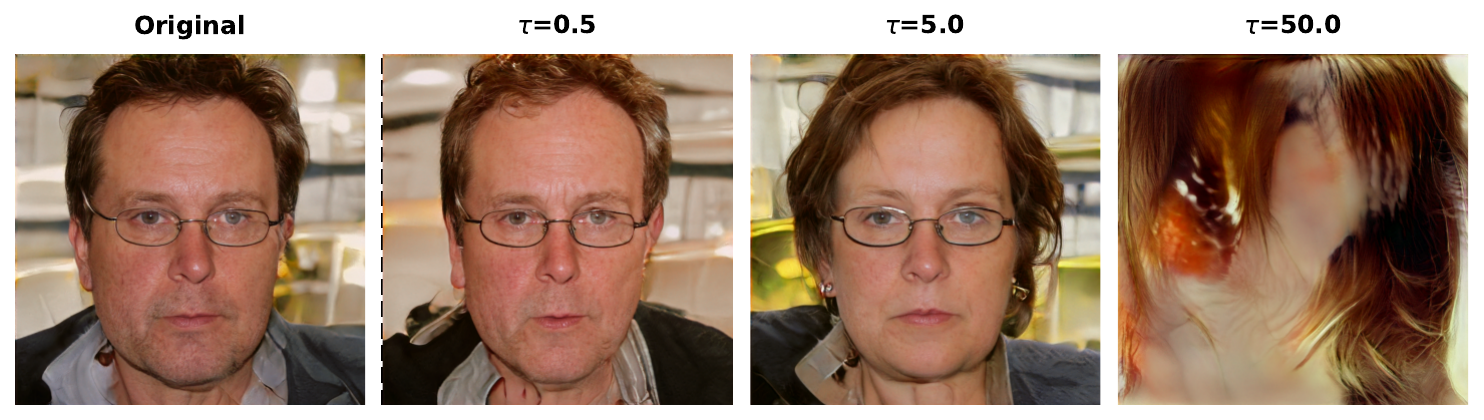}
            \caption{Effect of Regularization $\tau$}
            \label{fig:ab_tau}
        \end{subfigure}
        \vspace{0.2cm}
        \begin{subfigure}[b]{\textwidth}
            \centering
            \includegraphics[width=\textwidth]{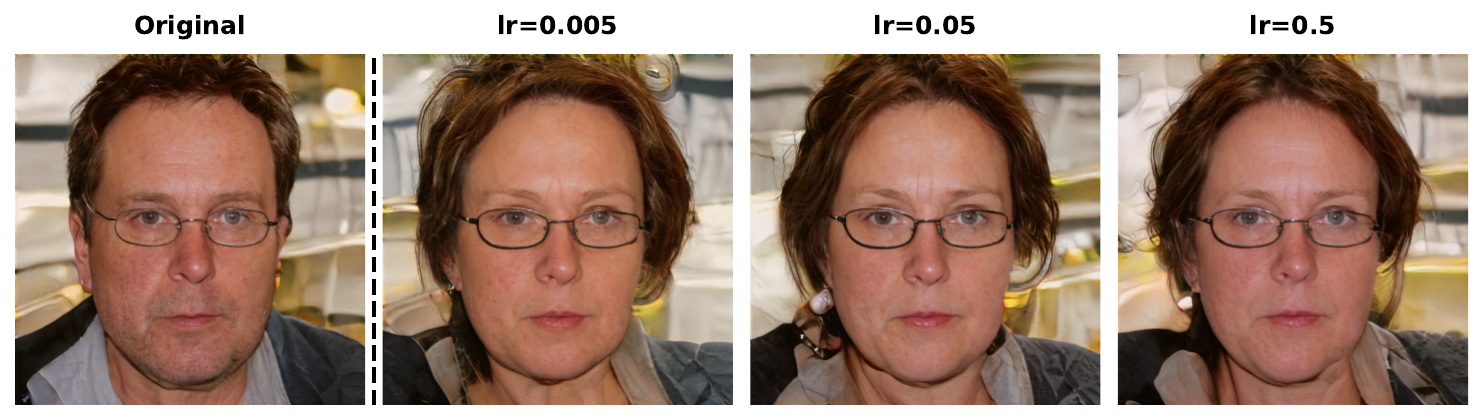}
            \caption{Effect of learning rate $\eta$}
            \label{fig:ab_lr}
        \end{subfigure}
        \vspace{0.2cm}
        \begin{subfigure}[b]{\textwidth}
            \centering
            \includegraphics[width=\textwidth]{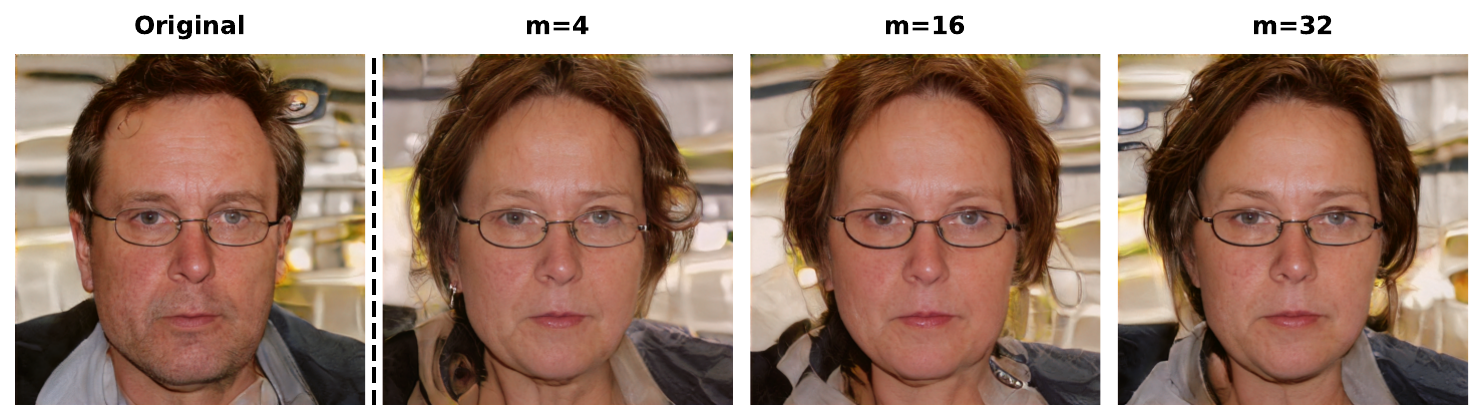}
            \caption{Effect of the number of particles $m$}
            \label{fig:ab_samples}
        \end{subfigure}
    \end{minipage}
    \hfill
    \begin{minipage}[c]{0.54\textwidth}
        \centering
        \begin{subfigure}[b]{\textwidth}
            \centering
            \includegraphics[width=\textwidth]{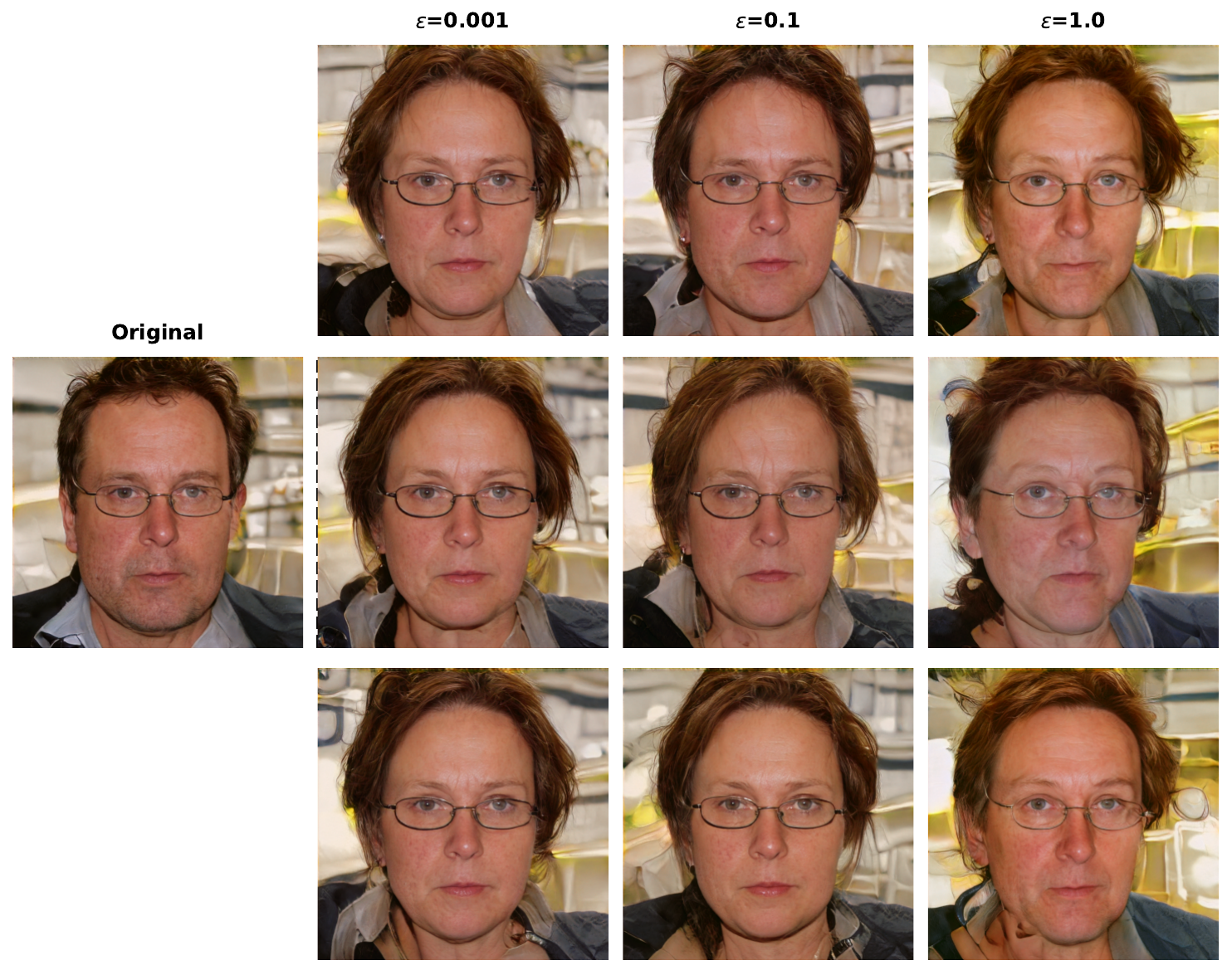}
            \caption{Effect of Entropy Regularization $\epsilon$ on Diversity}
            \label{fig:ab_epsilon}
        \end{subfigure}
    \end{minipage}
    \caption{Qualitative ablation study on the WFR sampler parameters. (a-c) show the sensitivity of single-sample generation to $\tau$, $\eta$, and $m$. (d) visualizes the Top-3 candidates for each $\epsilon$ to demonstrate sample diversity. The leftmost column in each figure represents the original source image.}
    \label{fig:ablation_grid}
\end{figure}

Finally, we visualize the temporal evolution of the latent code in Figure \ref{fig:convergence}. The progression from iteration 250 to 1,000 demonstrates a stable and smooth transition toward the target attribute, confirming the ability of generating samples from the worst-case distribution of the proposed method.

\begin{figure}[h!]
    \centering
    \includegraphics[width=0.95\textwidth]{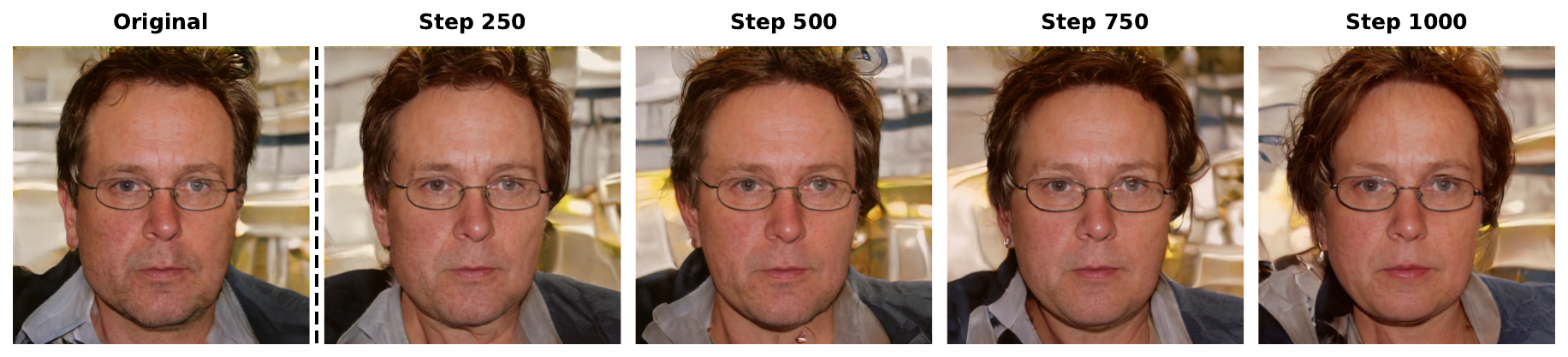}
    \caption{Visualization of the convergence process. The sampler demonstrates a gradual and stable transition towards the target attribute.}
    \label{fig:convergence}
\end{figure}

\subsection{Multi-class Logistic Regression on CIFAR-10 Features}\label{sec:appx_features}
We assess adversarial robustness using multi-class logistic regression on 512-dimensional features extracted from the CIFAR-10 dataset \citep{krizhevsky2009learning} via a pre-trained ResNet-50. This convex setting is complementary to the deep non-convex experiments in Sections~\ref{sec:lenet_mnist} and~\ref{sec:cifar_resnet}: it allows us to compare the full set of methods (including RGO-DRO and SVG-DRO) head to head across a sweep of $\epsilon$ values. The model minimizes the multi-class logistic loss
\[
\ell(B, x, y) = -y^\top B^\top x + \log\!\left(\mathbf{1}^\top \exp(B^\top x)\right),
\]
where $B \in \mathbb{R}^{512\times 10}$ is the parameter matrix and $y\in\mathbb{R}^{10}$ is the one-hot label. Perturbations affect only the features $x$; labels are fixed. The transport cost is defined as
\[
c((x, y), (x', y')) = \|x - x'\|_2^2 + \infty \cdot \mathbf{1}_{y \neq y'}.
\]
To evaluate robustness, we apply a Projected Gradient Descent (PGD) attack \citep{madry2017towards} with $L_2$-norm constraints to the test data. The perturbation magnitude $\Delta$ is normalized by the average $L_2$-norm of the test features, and we vary $\Delta$ from $0$ to $0.08$. Performance is measured by the misclassification rate.

\begin{figure*}[h!]
\centering
\begin{subfigure}{0.3\textwidth}
    \includegraphics[width=\textwidth]{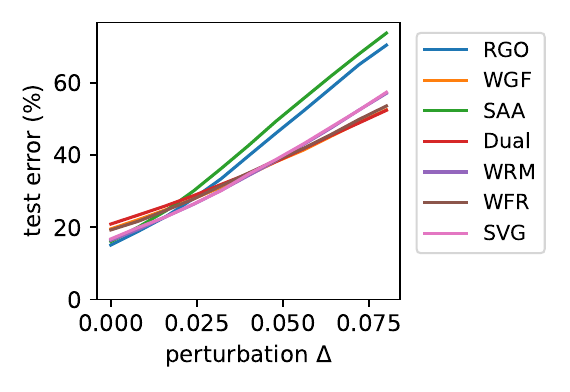}
    \caption{$\tau=0.05, \epsilon=0.2$}
\end{subfigure}
\hfill
\begin{subfigure}{0.3\textwidth}
    \includegraphics[width=\textwidth]{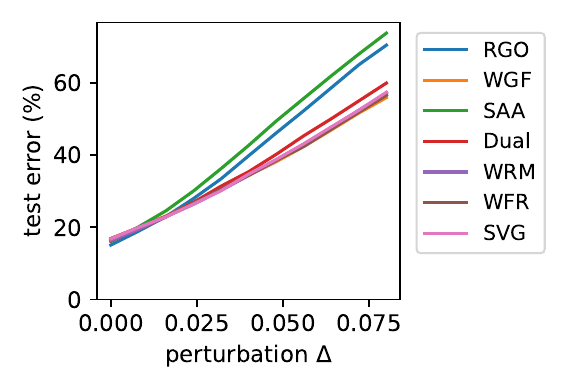}
    \caption{$\tau=0.05, \epsilon=0.02$}
\end{subfigure}
\hfill
\begin{subfigure}{0.3\textwidth}
    \includegraphics[width=\textwidth]{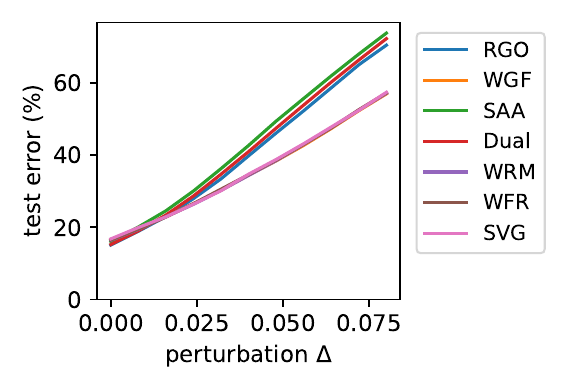}
    \caption{$\tau=0.05, \epsilon=0.002$}
\end{subfigure}
\caption{CIFAR-10 feature classification error under $L_2$-norm PGD attack. The plots show test error (\%) versus normalized perturbation $\Delta$. Models are trained for 10 epochs; inner loops use a stepsize of 0.01 for 100 iterations.}
\label{fig:robustness}
\end{figure*}

As shown in Figure~\ref{fig:robustness}, RGO-DRO fails to provide robustness, performing similarly to the SAA baseline. This suggests the inefficiency of rejection sampling on non-smooth loss functions, which is further supported by the results in Appendix~\ref{sec:experiment3}. The Dual method is sensitive to $\epsilon$ and becomes less robust as $\epsilon$ decreases. Notably, SVG-DRO performs nearly identically to WRM, showing a mode collapse issue (detailed in Appendix~\ref{sec:experiment3}). As observed in Figure~\ref{fig:svg_force}, with a small number of particles $m$, the repulsive force fails to prevent convergence to a single adversarial point. In contrast, the SDE-based methods (WFR-DRO and WGF-DRO) avoid this issue and achieve high robustness across all settings.

\subsection{Additional Diagnostics for CNN Experiments}\label{sec:appx_diagnostics}
This section provides training details of the experiments in Section~\ref{sec:cifar_resnet}, including convergence curves and wall-clock
cost, and a sensitivity analysis with respect to $\tau$ based on the experiments in Section~\ref{sec:cifar_resnet}.
\paragraph{Experimental setup.}
To accommodate the $32\!\times\!32$ input resolution, we apply the standard CIFAR-10 adaptation of ResNet-18: the first convolution is replaced with a $3\!\times\!3$, stride-$1$ kernel, the initial max-pooling layer is removed, and the final fully connected layer is replaced with a $10$-way classifier. 
All methods share this architecture and pretraining. Training uses SGD with learning rate $0.01$, momentum $0.9$, and weight decay $5\!\times\!10^{-4}$, batch size $256$, for $20$ epochs. 

\paragraph{Convergence diagnostics} Figure~\ref{fig:loss_curves} shows the outer-loop training loss. WRM
behaves almost identically to SAA. WGF-DRO, WFR-DRO and Dual exhibit
a similar overall trend, but WGF-DRO and WFR-DRO decay faster than
Dual, as the Dual gradient estimator suffers exponential bias in
$B\tau/\epsilon$ (where $B := \sup\ell(\theta,x)$, see
\citet{wang2021sinkhorn}) when the loss is large. WFR-DRO is
consistently a bit faster than WGF-DRO, which is consistent with
our theory.

\begin{figure}[h]
\centering
\includegraphics[width=0.5\linewidth]{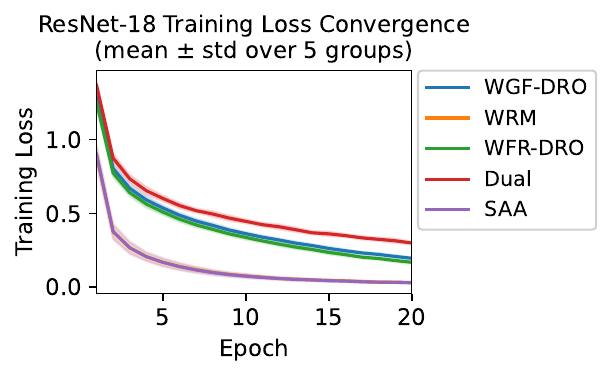}
\caption{Training loss convergence on CIFAR-10 ResNet-18 (mean $\pm$ std over 5 runs).}
\label{fig:loss_curves}
\end{figure}

\paragraph{Computational cost and \texorpdfstring{$\tau$}{tau}-sensitivity.}
Table~\ref{tab:wallclock} reports wall-clock time per epoch; Table~\ref{tab:tau_sweep} reports a $\tau$-sweep on MNIST LeNet-5 under PGD-$L_\infty$ at $\Delta=0.10$. The overhead relative to WRM scales with $m$; the $m$ particle updates are independent and fully parallelizable on GPU, so the wall-clock overhead is sub-linear in $m=8$ in practice. The Dual method is cheapest per epoch among all DRO methods. The $\tau$-sweep confirms WFR-DRO's advantage holds across the full hyperparameter range: WFR-DRO at its worst-case $\tau=0.05$ ($10.92\%$) still beats WRM at its best $\tau=1.0$ ($12.94\%$).

\begin{table*}[h!]
\centering
\begin{minipage}[t]{0.48\textwidth}
\centering
\caption{Wall-clock time per epoch (seconds).}
\label{tab:wallclock}
\resizebox{\linewidth}{!}{%
\begin{tabular}{lcc}
\toprule
Method & MNIST LeNet-5 & CIFAR-10 ResNet-18 \\
\midrule
SAA      & $20.43 \pm 0.56$     & $19.01 \pm 0.22$ \\
Dual     & $21.99 \pm 0.89$  & $84.66 \pm 5.63$ \\
WRM      & $24.64 \pm 0.69$  & $195.11 \pm 0.65$ \\
WGF-DRO  & $32.66 \pm 0.51$  & $711.29 \pm 7.53$\\
WFR-DRO  & $52.57 \pm 1.42$  & $1022.15 \pm 11.69$ \\
\bottomrule
\end{tabular}%
}
\end{minipage}\hfill
\begin{minipage}[t]{0.48\textwidth}
\centering
\caption{$\tau$-sweep error (\%) on MNIST LeNet-5 under PGD-$L_\infty$ at $\Delta=0.10$.}
\label{tab:tau_sweep}
\resizebox{\linewidth}{!}{%
\begin{tabular}{ccccc}
\toprule
$\tau$ & WFR-DRO & WGF-DRO & WRM & Dual \\
\midrule
$0.05$ & $10.92$ & $12.05$ & $13.51$ & $11.09$ \\
$0.5$  & $\phantom{0}8.58$  & $\phantom{0}9.40$  & $13.34$ & $11.14$ \\
$1.0$  & $\phantom{0}8.78$  & $\phantom{0}8.83$  & $12.94$ & $11.36$ \\
\bottomrule
\end{tabular}%
}
\end{minipage}
\end{table*}

\subsection{Two Moon Classification}\label{sec:experiment3}
In this section, we compare methods on an imbalanced binary classification problem using the 'two moons' dataset. The model is a three-layer neural network with ReLU activations and a hidden dimension of 16. The objective is to minimize the cross-entropy loss. The cost function for samples $(x, y)$ and $(x', y')$ is defined as $c((x,y),(x',y'))=\|x-x'\|_2^2 + \infty \cdot \mathbf{1}_{y\neq y'}$.

The training data is generated by \texttt{sklearn.make\_moons} with a noise level of 0.1. To introduce class imbalance, the training set of $N_{\text{train}}=200$ samples consists of 90\% positive ($y=1$) and 10\% negative ($y=0$) samples.

\begin{figure}[h!]
    \centering
    \includegraphics[scale=0.7]{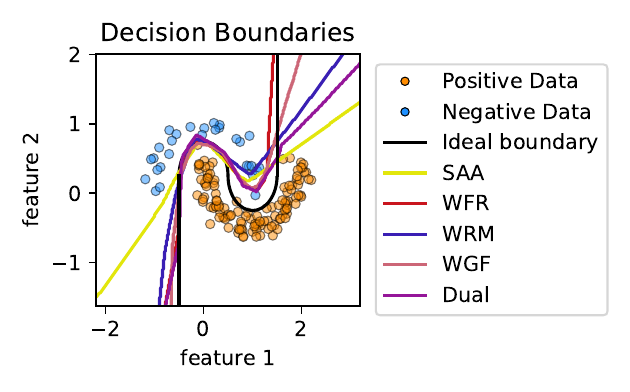}
    \caption{Decision boundary comparison for all methods on the two-moon classification task. For all DRO methods, we set $\tau=0.1$, and for SDRO methods, we set $\epsilon=0.01$. In each inner loop, WFR and WGF generate $m=5$ particles.}
    \label{fig:final_boundaries}
\end{figure}

Figure~\ref{fig:final_boundaries} illustrates the decision boundaries learned by various algorithms against the ideal boundary. Notably, the boundary from the WRM method fails to correctly separate the training data. The Dual method, while separating the classes in high-density regions, learns a boundary that deviates significantly from the ground truth. In contrast, WFR, and WGF learn boundaries that more closely approximate the ideal curve. Among them, WFR and WGF achieve the best results, tracking the ideal separator with high fidelity, which indicates superior performance in this setting.

To evaluate the capacity of generating the worst-case distribution of WRM, WGF, WFR, SVG, and RGO, we conducted an experiment on a pre-trained SAA model. This setup isolates the inner-loop optimization process used to find the worst-case distribution. The evolution of the inner objective function, $\mathbb{E}[-\widetilde V_{x, \tau}(z)]$, is depicted in Figure~\ref{fig:loss_twomoon}.

The efficacy of WGF, WFR, and SVG in approximating the worst-case distribution is demonstrated by a significant increase in the objective value. The reweighting mechanism in WFR contributes to its faster convergence compared to WGF. In contrast, SVG's performance is highly dependent on its initialization parameters. An initial standard deviation of 0.1 leads to slower convergence than a standard deviation of 0.2, which achieves a rate similar to WFR. This sensitivity is attributed to the ability of a larger initial standard deviation to propel particles across the decision boundary, thereby facilitating a more thorough exploration of the perturbation space, as illustrated later in Figure~\ref{fig:perturbation_samples}. In contrast, WGF and WFR initiate their optimization from the empirical data distribution, which obviates the need to select initialization parameters.
\begin{figure}[h!]
\centering
    \includegraphics[scale=0.6]{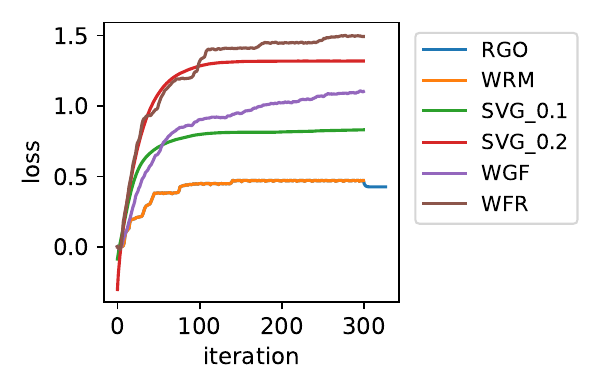}
    \caption{Evolution of $\mathbb{E}[\widetilde V_{x, \tau}(z)]$. We run all methods for 300 steps with a stepsize of 0.01. For RGO (blue), we run a rejection sampling procedure after solving the inner optimization problem. SVG\_0.1 and SVG\_0.2 denote initial distributions with a standard deviation of 0.1 and 0.2, respectively.}
    \label{fig:loss_twomoon}
\end{figure}

\begin{figure*}[h!]
\centering
    \begin{subfigure}{0.32\textwidth}
        \includegraphics[width=\textwidth]{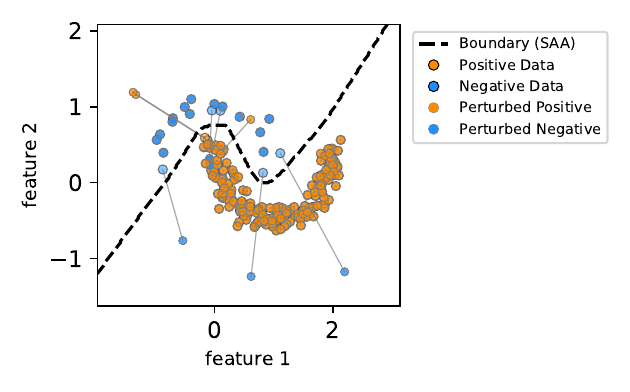}
        \caption{Perturbation by WRM}
        \label{fig:wrm_perturb}
    \end{subfigure}
    \hfill
    \begin{subfigure}{0.32\textwidth}
        \includegraphics[width=\textwidth]{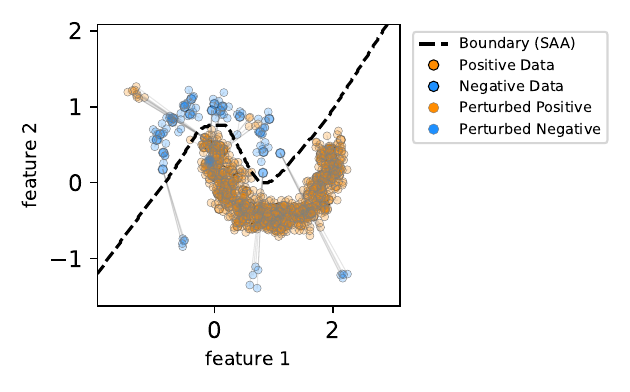}
        \caption{Perturbation by RGO}
    \end{subfigure}
    \hfill
    \begin{subfigure}{0.32\textwidth}
        \includegraphics[width=\textwidth]{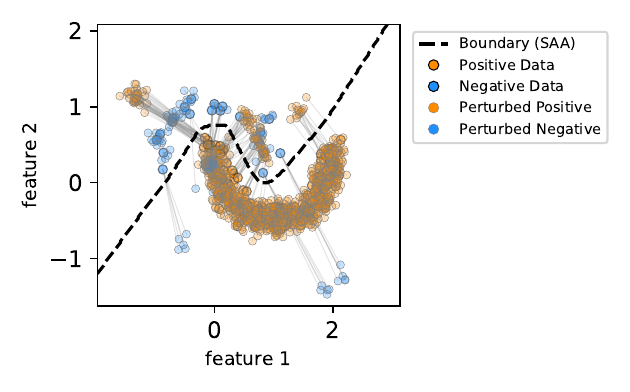}
        \caption{Perturbation by WGF}
        \label{fig:wgf_perturb}
    \end{subfigure}
    \hfill
    \begin{subfigure}{0.32\textwidth}
        \includegraphics[width=\textwidth]{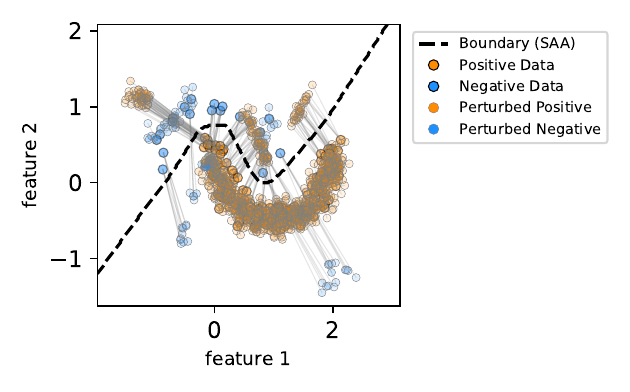}
        \caption{Perturbation by WFR}
        \label{fig:wfr_perturb}
    \end{subfigure}
    \hfill
    \begin{subfigure}{0.32\textwidth}
        \includegraphics[width=\textwidth]{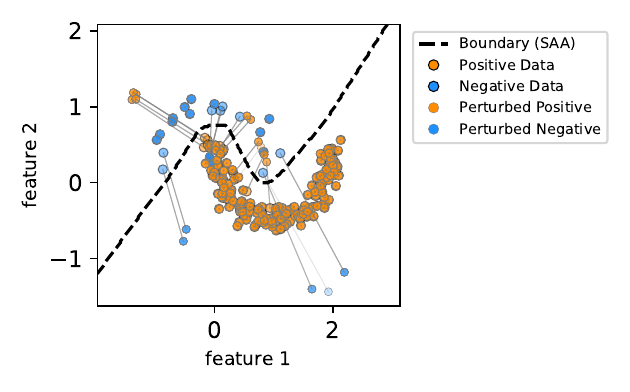}
        \caption{Perturbation by SVG ($\sigma=0.1$)}
        \label{fig:svg0.1}
    \end{subfigure}
    \hfill
    \begin{subfigure}{0.32\textwidth}
        \includegraphics[width=\textwidth]{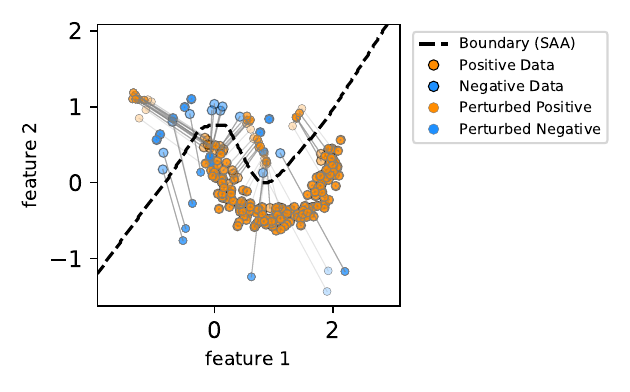}
        \caption{Perturbation by SVG ($\sigma=0.2$)}
        \label{fig:svg0.2}
    \end{subfigure}
    \caption{Visualization of perturbed samples (gray-edged smaller circles) generated from original data (black-edged bigger circles) against the SAA boundary at the final step. For all methods, we use a stepsize of 0.01 and run for 300 iterations. For WFR, the intensity of points visualizes the sample weights. We show perturbations by SVG-DRO with different initializations.}
    \label{fig:perturbation_samples}
\end{figure*}

Conversely, WRM and RGO fail to significantly increase the objective. The non-convex nature of the objective, evidenced by the initial dip and subsequent rise for WGF and WFR, likely presents a challenge for the optimization procedures in WRM and RGO. Additionally, the performance of RGO is hindered by its rejection sampling stage, which is inefficient for non-smooth objectives.

Figure~\ref{fig:perturbation_samples} provides a visual confirmation of these results, displaying the perturbed samples at the final iteration of the inner loop. WRM generates minimal perturbations, with particles remaining in close proximity to the original data points. The perturbations from RGO are qualitatively similar, appearing as slightly noisier versions of the WRM results. In contrast, WGF, WFR, and SVG generate a diverse set of adversarial examples, effectively pushing samples across the decision boundary, including those initially distant from it. Notably, the extent of perturbation for SVG is dependent on the initialization; an initial standard deviation of 0.2 results in more significant perturbations than a standard deviation of 0.1. And the particles generated by SVG for a single data point tend to be highly concentrated.

This phenomenon arises from the interplay between the two forces governing SVGD: a driving force that pushes particles toward regions of higher loss and a repulsive force from the kernel that prevents particle collapse. The optimization dynamics, visualized in Figure~\ref{fig:svg_force}, show that the driving force initially dominates, causing the particles to converge. As the particles draw closer, the repulsive force increases to counteract this convergence. However, since the number of particles is relatively small, the repulsive force only dominates when particles are very close, leading to the observed particle concentration. This result also explains the reason why SVG shows nearly identical performance as WRM in Appendix~\ref{sec:appx_features}.

\begin{figure*}[h!]
\centering
    \begin{subfigure}{0.32\textwidth}
        \includegraphics[width=\textwidth]{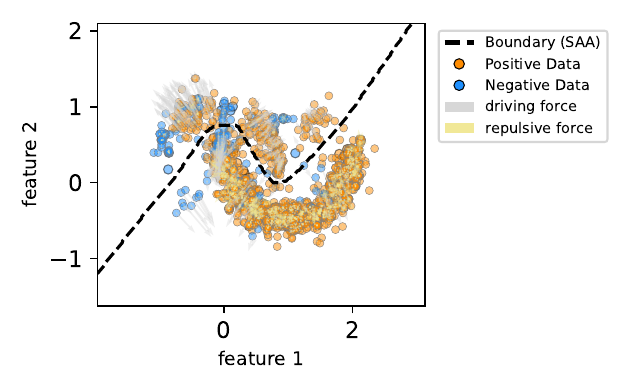}
        \caption{Iteration 20}
    \end{subfigure}
    \hfill
    \begin{subfigure}{0.32\textwidth}
        \includegraphics[width=\textwidth]{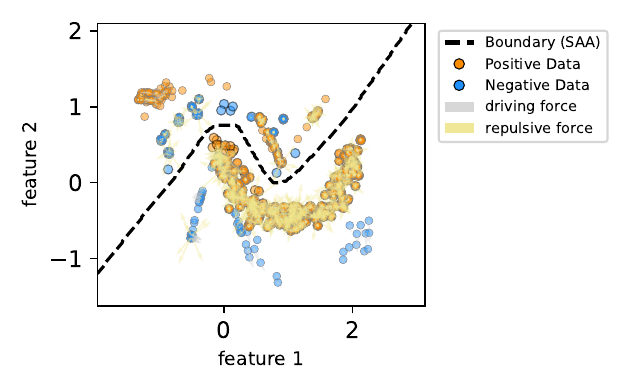}
        \caption{Iteration 100}
    \end{subfigure}
    \hfill
    \begin{subfigure}{0.32\textwidth}
        \includegraphics[width=\textwidth]{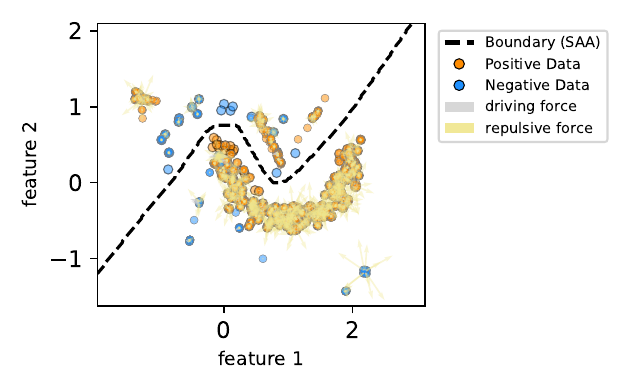}
        \caption{Iteration 300}
    \end{subfigure}
    \caption{Evolution of particle positions in the SVG method. The driving force guides initial convergence, while the repulsive force prevents complete collapse. But in this experiment, the repulsive force fails to push particles apart efficiently, leading to the mode collapse. }
    \label{fig:svg_force}
\end{figure*}

\subsection{Classification under Data Imbalance}\label{sec:circle-exp}

\begin{figure*}[h!]
\begin{subfigure}[b]{0.32\textwidth}
        \includegraphics[width=\textwidth]{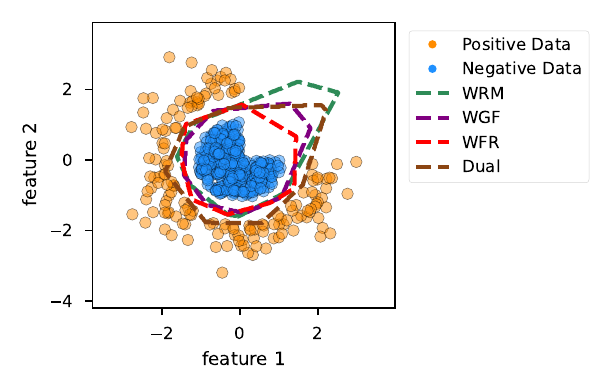}
        \caption{Decision boundaries}
        \label{fig:circle-boundary}
    \end{subfigure}
    \hfill
    \begin{subfigure}[b]{0.32\textwidth}
        \includegraphics[width=\textwidth]{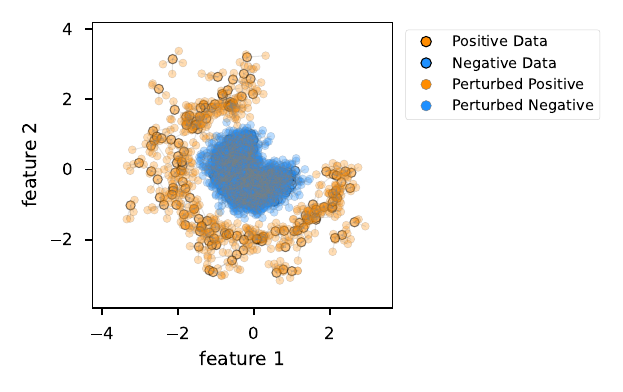}
        \caption{Perturbation by WFR}
        \label{fig:WFR_circle}
    \end{subfigure}
    \hfill
    \begin{subfigure}[b]{0.32\textwidth}
        \includegraphics[width=\textwidth]{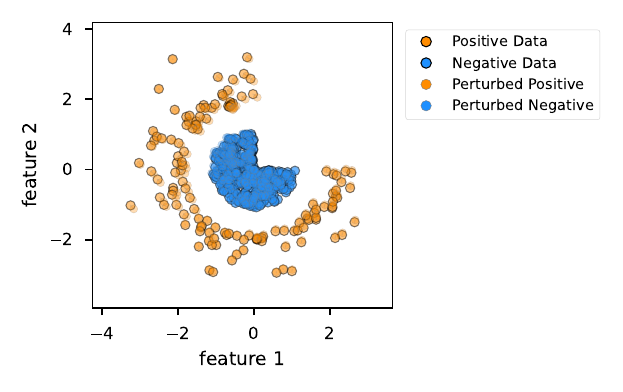}
        \caption{Perturbation by WRM}
        \label{fig:WRM_circle}
    \end{subfigure}
    \caption{\textbf{Robust Decision Boundaries with Biased Data.} \textbf{(a)} Decision boundaries learned by different methods on the biased circle dataset. The training data are shown as orange (positive class) and blue (negative class) points. The final classification boundaries are shown for the WRM (green), WGF (purple), WFR (red), and Dual (brown) models. All models were trained for 40 epochs. We set the regularization parameter $\tau=2.5$ for all methods and the entropy regularization $\epsilon=0.15$ for methods based on entropy-regularized Wasserstein DRO problem. \textbf{(b)} Samples from the worst-case distribution generated by WFR sampler at the first epoch. 
    \textbf{(c)} Worst-case samples generated by WRM method at the first epoch. WRM can only generate discrete distributions as worst-case distribution while entropy-regularized DRO uses potentially continuous distributions as worst-case distribution. The original data points are shown as circles with black edge and the worst-case samples are shown by circles with a shallower color.}
    \label{fig:circle-experiment}
\end{figure*}
This experiment demonstrates the robustness of our proposed methods to data imbalance. The experimental setup is adapted from \citep{sinha2020certifyingdistributionalrobustnessprincipled}. We generate a dataset where features $X \in \mathbb{R}^2$ are drawn from a Gaussian distribution. The labels are assigned based on the rule $Y = \text{sign}(\|X\|_2 - \sqrt{2})$, which creates two classes separated by a circle of radius $\sqrt{2}$. To establish a clear margin, all data points satisfying $\|X\|_2 \in (\sqrt{2}/1.3, 1.3\sqrt{2})$ are excluded. To simulate a biased training distribution, we remove all samples from the first quadrant. This removal introduces a significant imbalance, testing the ability of each algorithm to learn a generalizable decision boundary rather than overfitting to the biased training data. The model is a neural network with a single hidden layer of 4 units.

Figure~\ref{fig:circle-boundary} visualizes the learned decision boundaries. WFR learns a decision boundary that closely matches the true circular boundary, correctly classifying the held-out data in the first quadrant and thus demonstrating robustness to the distributional shift. In contrast, the WRM boundary is overly expansive, misclassifying large regions of the feature space. This indicates a failure to generalize from the biased training set, resulting in a less reliable classifier. The boundary from WGF lies between those of WFR and WRM, highlighting the benefit of the weight flow mechanism in WFR for achieving a more robust solution. The dual method \citep{wang2021sinkhorn} does not perform as well as WFR in this setting. We attribute this to the high sensitivity of the dual method to the choice of hyperparameters ($\epsilon$ and $\lambda$), which can significantly impact the bias of the gradient estimator.

To further investigate the differing behaviors, Figure~\ref{fig:WFR_circle} and Figure~\ref{fig:WRM_circle} visualize the samples generated by WFR and WRM at the first epoch. We observe that the samples generated by WFR algorithm begin to recover parts of the missing data distribution in the first quadrant. Conversely, the samples generated by WRM fail to do so, providing insights into why it learns a less robust boundary.

We also conducted an experiment with an increased training set of 2000 samples while keeping all other settings unchanged. With a larger dataset, the decision boundaries of all methods are expected to converge toward the true circular boundary. In this high-data regime, WGF, Dual, and WRM learn similar boundaries that are nearly circular but still exhibit noticeable deviations. WFR, however, learns a tight and circular boundary, demonstrating its consistent superior performance (see Figure~\ref{fig:circle_2000}).
\begin{figure}[h]
    \centering
    \includegraphics[width=0.45\linewidth]{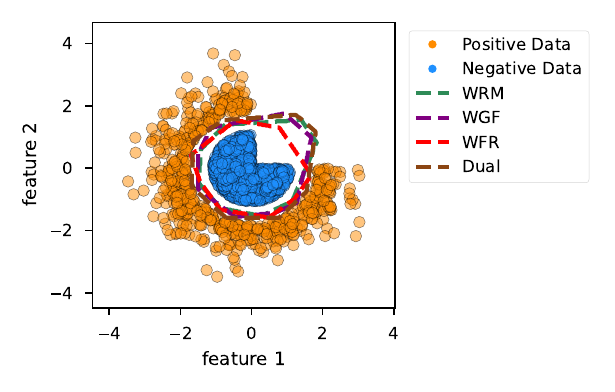}
    \caption{Decision boundaries on the Biased Circle dataset with 2000 training samples. WFR still learns a more accurate circular boundary compared to other methods.}
    \label{fig:circle_2000}
\end{figure}

\subsection{Uncertain Least Square}
We evaluate our methods on a distributionally robust least squares problem, following the setup from \citep{zhu2021kernel}, which was adapted from \citep{el1997robust}. The objective is to find a parameter vector $\theta \in \mathbb{R}^{10}$ that minimizes the loss $f_{\theta}(\xi) = \|A(\xi)\theta - b\|_2^2$, where the system matrix $A(\xi)$ is subject to uncertainty. The matrix $A(\xi) = A_0 + \xi A_1 \in \mathbb{R}^{10 \times 10}$ is an affine function of an uncertain scalar parameter $\xi \in [-1, 1]$. The matrices $A_0, A_1$ and the vector $b$ are fixed, with entries drawn independently from a standard normal distribution $\mathcal{N}(0, 1)$.

\begin{figure}[h!]
    \centering
    \includegraphics[scale=0.6]{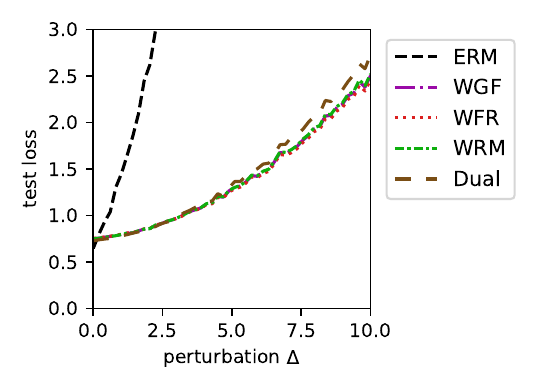}
    \caption{Test loss as a function of the perturbation level $\Delta$ for the uncertain least squares problem. All methods were trained for 10 epochs with $\lambda=0.1$. For methods with an inner loop, the stepsize was $0.0001$ for $3000$ iterations. SDRO-based methods used $\epsilon=0.1$. WGF and WFR used $m=8$ particles.}
    \label{fig:uls_results}
\end{figure}
The training set comprises $N=10$ samples $\{\xi_i\}_{i=1}^{N}$ drawn uniformly from $[-0.5, 0.5]$. To evaluate robustness, test samples are drawn from a shifted distribution, specifically uniform on $[-0.5(1+\Delta), 0.5(1+\Delta)]$. We vary the shift magnitude $\Delta$ from $0$ to $10$, where a larger $\Delta$ signifies a greater departure from the training distribution.

Figure~\ref{fig:uls_results} shows the test loss as a function of the perturbation level $\Delta$. All distributionally robust methods maintain a significantly lower test loss than the empirical risk minimization baseline, demonstrating their robustness to distributional shifts. At low perturbation levels, all robust methods perform comparably. As $\Delta$ increases, the performance of the dual method degrades more rapidly than the others. WFR shows a marginal improvement over WGF and WRM.   This limited advantage is attributable to the one-dimensional, bounded nature of the inner problem, which constrains worst-case samples to lie near the boundary of $[-1, 1]$, thus minimizing the differences between the generated adversarial distributions.

\paragraph*{Acknowledgement.}
We thank Jie Wang~\citep{wang2025iterative} for pointing out a flaw in our original proof of Theorem~\ref{thm:ula}, which we have now fixed.

\bibliographystyle{apalike}
\bibliography{Bibliography}

\end{document}